\documentclass[11pt]{amsart}

\usepackage{stmaryrd} 
\usepackage{amssymb}
\usepackage{mathabx} 
\usepackage{amsmath} 
\usepackage{amscd}
\usepackage{amsbsy}
\usepackage{leftindex}
\usepackage{comment,enumerate}
\usepackage[matrix,arrow,cmtip]{xy}
\usepackage[pagebackref]{hyperref}
\hypersetup{
 colorlinks=true,
 linkcolor=violet,
 filecolor=magenta,
 citecolor=brown,
 urlcolor=teal,
 pdftitle={Euler Systems KRV},
 pdfpagemode=FullScreen, }
\usepackage{mathrsfs}
\usepackage{color}
\usepackage{mathtools,caption}
\usepackage{tikz-cd}
\usepackage{longtable}
\usepackage[utf8]{inputenc}
\usepackage[OT2,T1]{fontenc}
\usepackage{changepage}
\usepackage{mathtools}
\usepackage{commath}
\usepackage[left=2.7cm,right=2.7cm,top=3.5cm,bottom=3cm]{geometry}

\numberwithin{equation}{section}

\usepackage{graphicx}
\newcommand{\Bmu}{\mbox{$\raisebox{-0.59ex}
  {$l$}\hspace{-0.18em}\mu\hspace{-0.88em}\raisebox{-0.98ex}{\scalebox{2}
  {$\color{white}.$}}\hspace{-0.416em}\raisebox{+0.88ex}
  {$\color{white}.$}\hspace{0.46em}$}{}}

\newcounter{lettera}

\newtheorem{itheorem}[lettera]{Theorem}  

\DeclareMathOperator{\Gr}{Gr}

\DeclareMathOperator{\Spec}{Spec}

\DeclareMathOperator{\Norm}{norm}
\DeclareMathOperator{\pr}{pr}
\DeclareMathOperator{\cris}{cris}

\DeclareMathOperator{\Diag}{Diag}
\DeclareMathOperator{\Hom}{Hom}

\DeclareMathOperator{\cores}{cores}
\DeclareMathOperator{\Gal}{Gal}

\DeclareMathOperator{\ord}{ord}

\DeclareMathOperator{\charac}{char}

\DeclareMathOperator{\tor}{tor}

\DeclareMathOperator{\coker}{coker}
\DeclareMathOperator{\GL}{GL}
\DeclareMathOperator{\SL}{SL}
\DeclareMathOperator{\cyc}{cyc}

\DeclareMathOperator{\cont}{cont}

\DeclareMathOperator{\Sym}{Sym}
\DeclareMathOperator{\TSym}{TSym}
\DeclareMathOperator{\mom}{mom}
\DeclareMathOperator{\Sp}{sp}

\DeclareMathOperator{\Fil}{Fil}
\DeclareMathOperator{\loc}{loc}
\DeclareMathOperator{\Iw}{Iw}

\newcommand{\sF}{\mathscr{F}}
\newcommand{\sH}{\mathscr{H}}
\newcommand{\sP}{\mathscr{P}}
\newcommand{\sR}{\mathscr{R}}
\newcommand{\sS}{\mathscr{S}}
\newcommand{\Q}{\mathbb Q}
\newcommand{\D}{\mathbb D}
\newcommand{\T}{\mathbb T}
\newcommand{\F}{\mathbb F}
\newcommand{\Z}{\mathbb Z}
\newcommand{\C}{\mathbb C}

\newcommand{\mbA}{\mathbf A}
\newcommand{\mbf}{\mathbf f}
\newcommand{\mbg}{\mathbf g}
\newcommand{\mbj}{\mathbf j}
\newcommand{\mba}{\mathbf a}
\newcommand{\mbk}{\mathbf k}

\newcommand{\mbT}{\mathbf T}
\newcommand{\mbV}{\mathbf V}
\newcommand{\cO}{\mathcal O}
\newcommand{\fP}{\mathfrak{P}}

\newcommand{\fm}{\mathfrak{m}}

\newcommand{\Zp}{\Z_p}
\newcommand{\bPsi}{\mathbf{\Psi}}
\newcommand{\bkappa}{\boldsymbol{\kappa}}

\newcommand{\Exterior}{\mathchoice{{\textstyle\bigwedge}}%
    {{\bigwedge}}%
    {{\textstyle\wedge}}%
    {{\scriptstyle\wedge}}}
    
\newcommand{\defeq}{\vcentcolon=}

\newcommand{\longepi}{\mbox{\;$\relbar\joinrel\twoheadrightarrow$\;}}

\DeclareSymbolFont{cyrletters}{OT2}{wncyr}{m}{n}
\DeclareMathSymbol{\Sha}{\mathalpha}{cyrletters}{"58}
\DeclareMathSymbol{\Zh}{\mathalpha}{cyrletters}{"11}

\usepackage[OT2,T1]{fontenc}
\input{cyracc.def}

\newcounter{para}

\newtheorem*{Theorem*}{Theorem}
\newtheorem*{Conj*}{Conjecture}
\newtheorem*{Defi*}{Definition}
\newtheorem*{hypo*}{Hypothesis}
\newtheorem*{prop*}{Proposition}
\newtheorem{Th}{Theorem}[section]
\newtheorem{Lemma}[Th]{Lemma}

\newtheorem{prop}[Th]{Proposition}

\theoremstyle{definition}
\newtheorem{Defi}[Th]{Definition}
\theoremstyle{remark}
\newtheorem{Remark}[Th]{Remark}
\newtheorem*{Remark*}{Remark}

\definecolor{cadmiumgreen}{rgb}{0.0, 0.42, 0.24}

\begin{document}
\title[Euler systems and the symmetric square of a Hida family]{Euler systems and the symmetric square\\of a Hida family}

\author{Debanjana Kundu, Jishnu Ray and Stefano Vigni}

\address[Kundu]{Department of Mathematics and Statistics, University of Regina, 3737 Wascana Parkway, Regina, Saskatchewan, S4S 0A2 Canada}
\email{debanjana.kundu@uregina.ca}

\address[Ray]{Harish-Chandra Research Institute, Chhatnag Road, Jhunsi, Prayagraj 211 019, India}
\address[Ray]{Homi Bhabha National Institute, Training School Complex, Anushakti Nagar, Mumbai 400 094, India}
\email{jishnuray@hri.res.in}

\address[Vigni]{Dipartimento di Matematica, Universit\`a di Genova, Via Dodecaneso 35, 16146 Genova, Italy}
\email{stefano.vigni@unige.it}

\thanks{During the initial stages of the project, the first author was supported by a PIMS postdoctoral fellowship; she is currently supported by an NSERC Discovery grant RGPIN-2026-07384. The second author gratefully acknowledges support from Inspire Research Grant, Department of Science and Technology, Govt. of India. The third author was partially supported by PRIN 2022 ``The arithmetic of motives and $L$-functions'' and by the GNSAGA group of INdAM. The research by the third author is also partially supported by the MUR Excellence Department Project awarded to Dipartimento di Matematica, Universit\`a di Genova, CUP D33C23001110001}


\keywords{Euler systems, symmetric square, Hida families, Selmer groups, algebraic functional equation, Iwasawa main conjecture}
\subjclass[2020]{Primary 11R23, 11F33, 11F80; Secondary 11F66, 11G18, 11R34}

\begin{abstract} 
Let $p\geq7$ be a prime number. We build a non-trivial Euler system for the symmetric square of a $p$-adic Hida family of modular forms interpolating the Euler system constructed by Loeffler--Zerbes for the symmetric square of a $p$-ordinary newform. As a second contribution, we prove an algebraic functional equation for dual Selmer groups in this setting. Finally, building on recent work by B\"uy\"ukboduk--Ganguly on functional equations of algebraic (Rankin--Selberg) $p$-adic $L$-functions, we prove a divisibility result towards the Iwasawa main conjecture for the symmetric square of a Hida family. 
\end{abstract}

\maketitle

\section{Introduction} \label{section: intro}

\subsection{Background and motivation}

The Iwasawa theory of Galois representations attached to modular forms has been profoundly enriched in recent years by the systematic construction of Euler systems. Let $p\geq7$  be a prime number. For a $p$-ordinary eigenform $f$, denote by $M(f)$ the two-dimensional $p$-adic Galois representation attached to $f$ by Deligne (\cite{Del-Bourbaki}). The Iwasawa main conjecture predicts a deep relationship between the characteristic ideal of the Selmer group of $M(f)$ and a $p$-adic $L$-function interpolating critical $L$-values. While the classical case of this two-dimensional representation was largely settled by Kato (\cite{Kato}), the arithmetic of its three-dimensional symmetric square $\Sym^2 M(f)$, which is intimately linked to the deformation theory of $f$ and modularity lifting theorems, has historically proved much more resistant.

A major breakthrough was achieved in \cite{LZ19} by Loeffler and Zerbes, who built a non-trivial Euler system for the symmetric square of a $p$-ordinary eigenform. By projecting the Beilinson--Flach classes attached to the Rankin--Selberg convolution $f\otimes f$ to the symmetric square direct summand, they successfully bounded the associated Selmer groups and deduced a divisibility a result in the direction of the relevant Iwasawa main conjecture. However, passing from a single modular form to a $p$-adic family of modular forms \emph{\`a la} Hida (\cite{hida86b, hida86a}) introduces profound new arithmetic and analytic phenomena. The goal of this article is to generalize the Iwasawa theory of the symmetric square of a modular form to the setting of $p$-adic Hida families. 

More precisely, let $\mbf$ be a $p$-adic Hida family of tame level $N_\mbf$ and let $M(\mbf)^*$ be its associated ``big'' Galois representation over Hida's Hecke algebra. We construct a non-trivial Euler system for the symmetric square representation $\Sym^2M(\mathbf{f})^*$, establish an algebraic functional equation for the associated Selmer groups and ultimately prove a divisibility result towards the three-variable Iwasawa main conjecture for the symmetric square $\Sym^2\mbf$ of $\mbf$.

\subsection{Euler systems for Hida families} 

Our first main contribution is the explicit construction of an Euler system over the cyclotomic $\Zp$-extension. Following the philosophy of Mazur--Tate in \cite{MazurTate87}, we consider the Beilinson--Flach classes $\leftindex_c{\mathcal{BF}}_m^{\mbf,\mbf}$ constructed by Kings, Loeffler and Zerbes in \cite{KLZ17} for the (completed) tensor product $M(\mathbf{f})^* \hat{\otimes}M(\mathbf{f})^*$. When $\mathbf{f}$ is non-Eisenstein and $p$-distinguished, this rank-$4$ lattice decomposes into its symmetric and alternating components. By applying appropriate involutions and twisting by a Dirichlet character $\psi$, whose conductor we denote by $N_\psi$, we can isolate classes taking values in the rank-$3$ lattice $\mbT:=\Sym^2 M(\mathbf{f})^*(1)(\psi)$. 

In the following statement, using self-explanatory notation for roots of unity, let us write $H^1_{\Iw}\bigl(\Q(\Bmu_{mp^\infty}),\mbT\bigr)$ for the first Iwasawa cohomology group with values in $\mbT$ and let $\nu\defeq\psi(-1)$ be the parity of $\psi$. Moreover, denote by $\sR$ the set of square-free integers $m\geq1$ such that $(m,6pcN_\mbf N_\psi)=1$, where $c>1$ is an auxiliary integer we eventually eliminate (\emph{cf.} \S\ref{c-subsec}). 

\begin{itheorem}[Theorem~\ref{LZ 4.1.6}] \label{A-intro}
There exists a non-trivial Euler system $\mathbf{c}={(c_m)}_{m\in\sR}$ of cohomology classes $c_m \in H^1_{\Iw}\bigl(\Q(\Bmu_{mp^\infty}),\mbT\bigr)^\nu$ interpolating the Loeffler--Zerbes Euler system for each classical specialization.
\end{itheorem}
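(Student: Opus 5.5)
The construction transports Loeffler--Zerbes' symmetric-square projection argument for a single newform to the family-level Beilinson--Flach classes $\leftindex_c{\mathcal{BF}}_m^{\mbf,\mbf}$ of Kings--Loeffler--Zerbes, with coefficients in the Hida Hecke algebra.

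The starting point is the family version of the KLZ Euler system. For each $m\in\sR$ there is a class $\leftindex_c{\mathcal{BF}}_m^{\mbf,\mbf}\in H^1_{\Iw}\bigl(\Q(\Bmu_{mp^\infty}),M(\mbf)^*\hat\otimes M(\mbf)^*(1)\bigr)$. For $\ell\nmid mpcN_\mbf$, these classes satisfy the Euler-system norm relations. The relevant Euler factor is $P_\ell(\ell^{-1}\sigma_\ell^{-1})$ for the Rankin--Selberg representation, and it is given by the family Hecke eigenvalues at $\ell$. I also use that the specializations of these classes at classical weights $(k,k)$ recover, up to explicit nonzero factors, the Beilinson--Flach classes used in \cite{LZ19}.

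Since $p\ge 7$ and $\mbf$ is non-Eisenstein and $p$-distinguished, $2$ is invertible in the coefficient ring. Hence the lattice splits as $\Sym^2\oplus\Exterior^2$, and the flip involution $\iota$ acts by $\pm1$ on the two summands. I then carry out the following steps.

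\begin{enumerate}[(1)]
\item Twist by $\psi$, in the sense of pushing the class at level $mN_\psi$ forward and projecting to the $\psi$-isotypic part. This yields classes with values in $M(\mbf)^*\hat\otimes M(\mbf)^*(1)(\psi)$.
\item Compute how $\iota$ acts on these twisted classes. The swap $\mbf\leftrightarrow\mbf$ acts on Beilinson--Flach elements through $\sigma_{-1}$, up to sign. So applying the idempotent $\tfrac12(1+\iota)$ on the coefficients and projecting to the $\nu$-eigenspace of $\sigma_{-1}$ lands in $H^1_{\Iw}\bigl(\Q(\Bmu_{mp^\infty}),\mbT\bigr)^\nu$. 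Call the resulting class $c_m$.
\item Deduce the norm relations for $(c_m)$ from those of $\leftindex_c{\mathcal{BF}}^{\mbf,\mbf}$. For this I factor $P_\ell$ for $M\otimes M$ as the Euler factor of $\Sym^2$ times that of $\Exterior^2 M=\det$. The $\det$ factor $1-\ell^{k-1}\varepsilon(\ell)\ell^{-1}\sigma_\ell^{-1}$ must then be absorbed. Following \cite{LZ19}, I would try to do this by modifying the classes through multiplication by suitable elements of the Iwasawa algebra, or by noting that on the symmetric part the extra factor acts as a unit congruent to the correct one modulo $\ell-1$. The general Euler-system formalism allows such modifications.
\item Prove the interpolation property. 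Because every step commutes with specialization at arithmetic primes, this follows from the KLZ specialization results together with the Loeffler--Zerbes definition.
\item Prove non-triviality. By interpolation it suffices that a single classical specialization is nonzero. For that specialization, non-vanishing is proved in \cite{LZ19} via the explicit reciprocity law, under a choice of $\psi$ making the relevant symmetric square $L$-value nonzero.
\end{enumerate}

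I expect step (3) to be the main obstacle: the discrepancy between the Rankin--Selberg and symmetric-square Euler factors has to be removed uniformly in the family. The $\det$ factor now involves the weight variable through $\ell^{k-1}$, which is an element of the weight Iwasawa algebra, rather than a fixed scalar. My first attempt is to check that the Loeffler--Zerbes correction is a polynomial in $\sigma_\ell$ and $\langle\ell\rangle$, and therefore makes sense over $\Lambda$. If it does not, the fallback is to invoke the Mazur--Rubin/Rubin criterion that Euler systems with congruent Euler factors can be converted into one another.
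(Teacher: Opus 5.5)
Your steps (1), (2), (4) and (5) follow the paper. You start from the family classes $\leftindex_c{\mathcal{BF}}^{\mbf,\mbf}_{mN_\psi}$ and use that the swap acts on them as $-[\sigma]$ (via Proposition~\ref{prop: alternating}), so they are symmetric on the part where complex conjugation acts by $-1$. You then twist by $\psi$, corestrict to $\Q(\Bmu_{mp^\infty})$, and get non-vanishing from \cite[Theorem~4.2.5]{LZ19}.

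\textbf{The gap is step (3).} It aims at something the theorem does not assert, and it cannot be carried out as you propose. The paper's norm relation is $\cores_m^{\ell m}(c_{\ell m})=P_\ell(\mbf\otimes\mbf\otimes\psi,\sigma_\ell^{-1})\cdot c_m$, with $P_\ell(\mbf\otimes\mbf\otimes\psi,X)=\bigl(1-\ell^{\mbk-1}\psi\varepsilon_\mbf(\ell)X\bigr)P_\ell(\Sym^2\mbf\otimes\psi,X)$. This is the full Rankin--Selberg factor, exactly as in \cite[Theorem~4.1.6]{LZ19}.
\begin{enumerate}
\item Removing the determinant factor would destroy the interpolation property of your step (4), because the Loeffler--Zerbes classes themselves carry that factor.
\item The factor cannot be removed uniformly over $\sR$ by your means. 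Take $\ell\equiv1\pmod p$ and put $a=\ell^{\mbk-1}\psi\varepsilon_\mbf(\ell)$, a unit. The two candidate Euler factors differ by $-aX\,P_\ell(\Sym^2\mbf\otimes\psi,X)$, whose linear coefficient $-a$ is not divisible by $\ell-1\in p\Zp$. So Rubin's congruence criterion \cite[Lemma~9.6.1]{Rub_ES} does not apply.
\item The element $1-a\sigma_\ell^{-1}$ is not a unit of the Iwasawa algebra in general, so it cannot simply be divided out.
\end{enumerate}
The paper removes this factor only later, in Theorem~\ref{ES we obtain} (following \cite[Theorem~5.3.3]{LZ19}). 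There it shrinks the set of primes to $\sP'$, so that $\ell\equiv1\pmod p$ and $\sigma_\ell-1$ is bijective on $\mbT'$, and passes to the $p$-subextensions $\Q(m)$. In that setting $\sigma_\ell$ lies in a pro-$p$ group, and bijectivity on $\mbT'$ is exactly what makes the determinant factor invertible.

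\textbf{A smaller point concerns your starting input.} The family classes do not directly satisfy $P_\ell$-relations. By Proposition~\ref{originalBF} they satisfy $\Norm_m^{\ell m}\bigl(\leftindex_c{\mathcal{BF}}^{\mbf,\mbf}_{\ell m}\bigr)=Q_\ell\bigl(\ell^{-\mbj}\sigma_\ell^{-1}\bigr)\cdot\leftindex_c{\mathcal{BF}}^{\mbf,\mbf}_m$. The paper's step (i) uses $Q_\ell(X)\equiv -X^{-1}P_\ell(X)\pmod{\ell-1}$, together with \cite[Lemma~9.6.1]{Rub_ES} and \cite[\S7.3]{LLZ14}, to modify the classes by units of the group ring so that they satisfy $P_\ell(\ell^{-1}\sigma_\ell^{-1})$-relations. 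It then renormalizes to $P_\ell(\sigma_\ell^{-1})$ as in \cite[Remark~11.4.2]{KLZ17}. This is where the congruence-modification tool you reached for in (3) actually belongs, since here the discrepancy really is a congruence modulo $\ell-1$.

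With this in place, reduce step (3) to inheriting the Rankin--Selberg norm relation through the projection and twist. Your argument then coincides with the paper's.
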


In order to extract bounds on the Selmer groups from this Euler system, we generalize the Kolyvagin system machinery to the setting of Hida families over multi-variable Iwasawa algebras. This requires us to establish a strong ``big image'' result (Theorem~\ref{thm: big image}) for the residual Galois representation, which ensures that the Kolyvagin derivatives satisfy the strict Greenberg local conditions at $p$.

\subsection{Parity obstruction and Selmer complexes}

While Theorem~\ref{A-intro} offers an algebraic bound on (dual) Selmer groups, connecting this bound with the analytic $p$-adic $L$-function presents an essential parity obstruction. More precisely, the Kolyvagin system produces a divisibility only for the $\eta$-isotypic component of the Selmer group, where $\eta$ is a character of the torsion subgroup of the cyclotomic Galois group satisfying the algebraic parity condition $\eta(-1) = \psi(-1)$. However, Hida's three-variable $p$-adic $L$-function attached to $\Sym^2\mbf$ (\cite{Hida-fourier}) and Dasgupta's formula decomposing the Rankin--Selberg $p$-adic $L$-function into its symmetric square and Kubota--Leopoldt components (\cite{Das16}) are naturally defined only on the opposite half of the weight space, where $\psi(-1)=-\sigma(-1)$. To bridge this gap, we must transport the Euler system bound across the weight space via an algebraic functional equation. While in \S\ref{alg-subsec} we provide (under mild technical assumptions) a descent proof of this functional equation along classical lines, the proof of our divisibility result towards the Iwasawa main conjecture requires perfectness and amplitude conditions that classical Selmer groups do not possess. Thus, in \S\ref{div-sec}, we lift the arithmetic to the derived category, employing the Iwasawa-theoretic Selmer complexes (in the sense of Nekov\'a\v{r}, \cite{Nek-selmer}) recently developed by B\"uy\"ukboduk and Ganguly (\cite{BG}).

Henceforth, let $\Q_{\cyc}$ be the $\Zp$-extension of $\Q$, set $\Gamma_1\defeq\Gal(\Q_{\cyc}/\Q)$ and put $\mbT_\mbf\defeq\Sym^2 M(\mbf)^*$; moreover, $\Lambda_\mbf$ is a suitable (localized) Hecke algebra attached to $\mbf$, $X(\mbT_{\mathbf{f}}/\Q_{\cyc})$ is the Pontryagin dual of the Selmer group of $\mbT_{\mbf}\otimes_{\Lambda_{\mbf}}\Hom_{\cont}(\Lambda_{\mbf},\Q_p/\Zp)$ over $\Q_{\cyc}$ introduced in \S\ref{selmer-subsec} and $\iota$ denotes the Iwasawa involution.

\begin{itheorem}[Theorem~\ref{func-thm}] \label{B-intro}
There is an equality of characteristic ideals
\[
\charac_{\Lambda_\mbf[\![\Gamma_1]\!]}\bigl(X(\mbT_{\mathbf{f}}/\Q_{\cyc})\bigr)= \charac_{\Lambda_\mbf[\![\Gamma_1]\!]}\bigl(X(\mbT_\mbf^*/\Q_{\cyc})^\iota\bigr).
\]
\end{itheorem}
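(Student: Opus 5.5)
We plan to prove the equality by the classical descent strategy for algebraic functional equations, in the style of Greenberg and Perrin-Riou, adapted to the multi-variable Iwasawa algebra $\mathcal{R}\defeq\Lambda_\mbf[\![\Gamma_1]\!]$. First we reduce to a statement about specializations. Both $X(\mbT_\mbf/\Q_{\cyc})$ and $X(\mbT_\mbf^*/\Q_{\cyc})^\iota$ are finitely generated over $\mathcal{R}$, and we will need them to be torsion; the first is torsion by the Euler system bound coming from Theorem~\ref{A-intro}. Since $\Lambda_\mbf$ is (after the localization in the paper, possibly normalizing) a Krull domain, it suffices to compare the two modules at each height-one prime $\mathfrak{P}$ of $\mathcal{R}$. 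Equivalently, it suffices to show that their ``Euler characteristics'' agree along a Zariski-dense family of height-one specializations. Concretely, we choose arithmetic primes $P\subset\Lambda_\mbf$ of weight $k\geq2$ together with a twist by $\sigma\in\Gal(\Q_{\cyc}/\Q)$ of finite order. We then prove that the characteristic ideals of the two specialized modules coincide in the corresponding one-variable (or zero-variable) quotient. By a standard lemma on characteristic ideals over regular local rings, equality of images modulo infinitely many ``generic'' height-one primes, avoiding the pseudo-null support, forces equality of the ideals.

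Second, we perform a control theorem. For an arithmetic $P$ we compare $X(\mbT_\mbf/\Q_{\cyc})\otimes\Lambda_\mbf/P$ with the dual Selmer group of $\Sym^2 M(f_P)^*$ over $\Q_{\cyc}$, and do the same on the dual side. The Greenberg ordinary filtration on $\Sym^2$ (rank-$1$ and rank-$2$ pieces at $p$, from the $p$-ordinarity of $\mbf$ and $p$-distinguishedness) is preserved under specialization. The error terms are governed by $H^0$ of the local and global residual representations. These vanish because of the big image result Theorem~\ref{thm: big image} (so $H^0(\Q_{\cyc},\overline{\mbT})=0$) and because we can choose $P$ so that local invariants at $p$ and at the bad primes dividing $N_\mbf N_\psi$ vanish. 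This yields pseudo-isomorphisms, and in particular no pseudo-null discrepancy, between the specializations of both sides and the classical Selmer groups for the form $f_P$.

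Third, at a single form we invoke the known functional equation. For $\Sym^2 M(f_P)$ over $\Q_{\cyc}$ the Selmer conditions are Greenberg conditions that are exact annihilators of each other under local Tate duality, since $\mbT_\mbf^*$ with the dual filtration is the Kummer dual. The Greenberg (Perrin-Riou) argument, via Poitou--Tate and the Cassels--Tate-type pairing on $\Lambda$-modules, or equivalently Nekov\'a\v{r}'s duality for Selmer complexes, then gives $\charac X(T/\Q_{\cyc})=\charac X(T^*/\Q_{\cyc})^\iota$ over $\mathcal{O}[\![\Gamma_1]\!]$, provided both sides are torsion. Combining this with the second step and lifting through the first step gives the theorem.

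The main obstacle is the second step together with the torsion hypothesis on the dual side. We need the control theorem to be exact up to finite error at enough arithmetic points. Torsionness of $X(\mbT_\mbf^*/\Q_{\cyc})$ is not directly given by the Euler system, which controls only one parity. We would first try to deduce it from torsionness of the other side via the global Euler characteristic formula: the Greenberg ranks match, since $\dim\Fil^+=\dim$ of the $(+1)$-eigenspace of complex conjugation for $\Sym^2$ twisted appropriately. Only if that fails would we impose the ``mild technical assumptions'' referred to in \S\ref{alg-subsec}, such as the vanishing of local $H^0$'s at $p$ for the graded pieces, so that the local error terms disappear.
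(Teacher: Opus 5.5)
Your Steps 2 and 3 match the paper's descent version of this result (Theorem~\ref{thm: alg func eqn}). That proof uses the control theorem of Proposition~\ref{prop: Jha-Pal Thm 4.1}, then \cite[Theorem~2]{Gre89} at each $f_k$. Step 1, however, has a genuine gap: the lifting lemma you invoke is false in the form you need, because the pseudo-null support \emph{cannot} be avoided by arithmetic primes.

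Write $\mathcal{R}=\Lambda_\mbf[\![\Gamma_1]\!]\simeq\cO[\![X,T]\!]$. Every arithmetic prime $P_k$ of $\Lambda_\mbf=\cO[\![X]\!]$ is generated by a distinguished polynomial, so $P_k\mathcal{R}\subset(\varpi,X)$ for all $k$. Hence a pseudo-null submodule supported at the height-two prime $(\varpi,X)$ contributes a $\mu$-type factor to \emph{every} specialization. Concretely, take $M=\mathcal{R}/(\varpi,X)$ and $N=\mathcal{R}/(\varpi)$, and let $P_k=(X-a)$ with $a\in\varpi\cO$ be any arithmetic prime of trivial character. Then $M/P_kM$ and $N/P_kN$ are literally the same quotient $\mathcal{R}/(\varpi,X)\simeq\F[\![T]\!]$, with characteristic ideal $(\varpi)$ over $\mathcal{R}/P_k\simeq\cO[\![T]\!]$, and they agree after any further specialization. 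Yet $\charac_{\mathcal{R}}(M)=(1)\neq(\varpi)=\charac_{\mathcal{R}}(N)$.

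In general one only has $\charac(W/P_kW)=\charac(W[P_k])\cdot\bigl(\charac(W)\bmod P_k\bigr)$. Your control theorem compares $W/P_kW$ with the Selmer group of $f_k$ up to finite error, and says nothing about the $W[P_k]$ term. Your phrase ``no pseudo-null discrepancy'' conflates these two different comparisons.

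This is exactly why the paper's descent argument is proved only under the extra hypothesis that $X(\mbT_\mbf/\Q_{\cyc})$ and $X(\mbT_\mbf^*/\Q_{\cyc})^\iota$ have no non-zero pseudo-null submodules. That hypothesis forces $W[P_k]=0$ whenever $P_k\nmid\charac(W)$, which is condition (iii) of Ochiai's criterion (Proposition~\ref{prop: JP 4.7}). Removing it would require an analogue of \cite[Proposition~4.8]{JP14}.

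The unconditional statement is instead proved without any descent:
\begin{enumerate}[\textup{(}i\textup{)}]
\item \cite[Theorem~2.21]{BG} gives an equality of determinants of the perfect Iwasawa-theoretic Selmer complexes directly over the big ring.
\item Proposition~\ref{big-prop} (vanishing of $e_\eta R^1\Gamma$ and torsionness of $R^2\Gamma$, under $(\mathrm{\mathbf{BI}}_{\Sym})$) identifies each determinant with the characteristic ideal of the degree-$2$ cohomology.
\item Lemma~\ref{surj-lemma} transfers the equality to the classical dual Selmer groups.
\end{enumerate}

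A smaller point: torsionness of $X(\mbT_\mbf/\Q_{\cyc})$ does not follow from Theorem~\ref{A-intro}. That bound concerns $e_\eta$-components, for a single parity, of the twisted lattice $\Sym^2M(\mbf)^*(1)(\psi)$ with a different local condition at $p$. The paper obtains torsionness of both modules from the control theorem and torsionness at the specializations (Proposition~\ref{torbig} and Remark~\ref{remark:dual}), so your Euler-characteristic detour is unnecessary.
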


This result is a consequence of the fact that, from a somewhat more general perspective, the degree-$2$ cohomology of the Selmer complex for the symmetric square representation satisfies an algebraic functional equation under $\iota$.

\subsection{Main divisibility and reducibility obstruction}

By invoking Theorem~\ref{B-intro}, we transport the characteristic ideal bound from the $\eta$-branch to the dual $\sigma$-branch. On this half of the weight space, the explicit reciprocity law for Beilinson–Flach families maps the bottom class of our Euler system to the Rankin--Selberg $p$-adic $L$-function, which allows us to apply Dasgupta’s analytic factorization. Let $L_p(\Sym^2\mbf,\psi,\kappa,\sigma)$ be Hida's three-variable $p$-adic $L$-function attached to $\Sym^2\mbf$ and let $L_p(\psi\epsilon_\mbf,s-k+1)$ be the Kubota--Leopoldt $p$-adic $L$-function associated with the Dirichlet character $\psi\epsilon_\mbf$, where $\varepsilon_\mbf$ denotes the prime-to-$p$ part of the Nebentypus of $\mbf$.

This yields our final result towards the Iwasawa main conjecture, which we can state as follows.

\begin{itheorem}[Theorem~\ref{main-thm}] \label{C-intro}
The product of $L_p(\Sym^2\mbf,\psi,\kappa,\sigma)$ and $L_p(\psi\epsilon_\mbf,s-k+1)$ lies in the characteristic ideal of the $\sigma$-component of the dual Selmer group $X(\mbT_\mbf/\Q_{\cyc})$.
\end{itheorem}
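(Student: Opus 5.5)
The strategy is to obtain the divisibility first on the $\eta$-branch, where the Euler system of Theorem~\ref{A-intro} applies, and then carry it to the $\sigma$-branch using Theorem~\ref{B-intro}. On the $\sigma$-branch we identify the bottom Euler system class with the analytic side through the explicit reciprocity law and Dasgupta's factorization.

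\emph{Step 1 (Euler system bound).} Take the Euler system $\mathbf{c}$ of Theorem~\ref{A-intro}. Apply the $\Lambda$-adic Kolyvagin system machinery, with big image supplied by Theorem~\ref{thm: big image}, to the $\eta$-isotypic part with $\eta(-1)=\psi(-1)$. This gives the divisibility
\[
\charac\bigl(X(\mbT^*_\mbf/\Q_{\cyc})^{\eta}\bigr)\ \Big|\ \charac\Bigl(H^1_{\Iw}(\Q(\Bmu_{p^\infty}),\mbT)^\eta\big/\Lambda_\mbf[\![\Gamma_1]\!]\,c_1\Bigr).
\]
This is a bound in terms of the index of the bottom class in a suitable rank-one quotient of Iwasawa cohomology. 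We check it at height-one primes, using that $\Lambda_\mbf[\![\Gamma_1]\!]$ is a regular local ring after localizing, or at least normal. Here the Selmer group must carry the relaxed condition at $p$ that is natural for Kolyvagin systems, namely the Greenberg condition involving the filtration of $\Sym^2$.

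\emph{Step 2 (reciprocity law).} Compose with the Perrin-Riou/Coleman big logarithm attached to the appropriate graded piece of the ordinary filtration at $p$. Via the explicit reciprocity law of \cite{KLZ17} for Beilinson--Flach families, the image of $c_1$ is the Rankin--Selberg $p$-adic $L$-function of $\mbf\otimes\mbf$ twisted by $\psi$. Poitou--Tate duality then turns the Step 1 bound into a bound on the strict (Greenberg) dual Selmer group by the characteristic ideal of this $p$-adic $L$-function. The price is an error term from the cokernel of the Coleman map, which we expect to be controlled by $p\ge7$ and the non-Eisenstein, $p$-distinguished hypotheses.

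\emph{Step 3 (change of branch and factorization).} The Rankin--Selberg $p$-adic $L$-function interpolates on the $\sigma$-half of weight space, while the bound lives on the $\eta$-half. We therefore apply Theorem~\ref{B-intro}, in its Selmer-complex form following \cite{BG}, to move the characteristic ideal across $\iota$ between $\mbT_\mbf$ and $\mbT^*_\mbf$. On the $\sigma$-branch, Dasgupta's factorization \cite{Das16} writes the symmetric-square projection of the Rankin--Selberg $p$-adic $L$-function as $L_p(\Sym^2\mbf,\psi,\kappa,\sigma)\cdot L_p(\psi\epsilon_\mbf,s-k+1)$, up to units and the involution. This yields the claimed membership in the characteristic ideal of $X(\mbT_\mbf/\Q_{\cyc})^\sigma$.

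\emph{Main obstacle.} We expect the hardest point to be Step 3. We must show that the functional equation is compatible with the isotypic decomposition, so that $\eta$ goes to $\sigma$ under $\iota$ and duality. We must also control the amplitude and perfectness of the Selmer complexes, so that characteristic ideals of $H^2$ agree with those of the classical dual Selmer groups, including at primes where the Rankin--Selberg function has trivial zeros or the Kubota--Leopoldt factor has a pole. The first approach would be to work entirely with Nekov\'a\v{r} complexes: prove the divisibility there, then compare with $X(\mbT_\mbf/\Q_{\cyc})$ at height-one primes, showing that pseudo-null discrepancies do not affect characteristic ideals.
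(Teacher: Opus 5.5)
You use the same ingredients as the paper. These are the Kolyvagin bound of Theorem~\ref{to complete} on the $\eta$-branch, the algebraic functional equation of Theorem~\ref{func-thm} from \cite{BG}, the reciprocity law of \cite{KLZ17}, and Dasgupta's factorization \eqref{L-fact-eq}. The perfectness and amplitude issues you list as your main obstacle are what the paper imports via Proposition~\ref{big-prop} and Lemma~\ref{surj-lemma}. However, you apply the ingredients in a different order. The paper first transports the divisibility to the $\sigma$-branch and only then applies $\mathcal{L}$ and \eqref{L-fact-eq}. You apply reciprocity on the $\eta$-branch, with an explicit Poitou--Tate step and Coleman-cokernel term that the paper leaves implicit, and transport afterwards. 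Your order is the one compatible with the construction. The class $c_1$ lies in the $\nu$-eigenspace, and the map $\mathcal{L}$ of \S\ref{PR-subsubsec} only sees that eigenspace. So $e_\sigma c_1=0$ when $\sigma(-1)=-\psi(-1)$, and reciprocity for $c_1$ can only be read off on the $\eta$-branch.

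The genuine gap is the phrase ``up to units and the involution'' in Step~3. Theorem~\ref{func-thm} concerns Selmer groups only and does nothing to the right-hand side of your divisibility. After transport, what you know is that $\charac\bigl(e_\sigma X(\mbT_\mbf/\Q_{\cyc})\bigr)$ divides, up to your Coleman term, the image under $\iota$ (and the twist implicit in Theorem~\ref{func-thm}) of $e_\eta L_p(\mbf,\mbf,\psi,1+\mbj)$. That is the Rankin--Selberg function on the half where, as you note yourself, \eqref{L-fact-eq} is unavailable; there $L_p(\Sym^2\mbf,\psi,\kappa,\sigma)$ is not even defined. To invoke Dasgupta you need an \emph{analytic} functional equation for the three-variable Rankin--Selberg $p$-adic $L$-function. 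It must identify this element, up to a unit, with the function's restriction to the $\sigma$-half. Theorem~\ref{func-thm} does not give this, and the paper does not state it either; its proof passes over the same point by invoking reciprocity ``on the $\sigma$-half''. One natural route is to interpolate the complex functional equation over the Zariski-dense set of classical points where Hida's function interpolates critical Rankin--Selberg values, then restrict to the diagonal. Along the way you would need to check that the $\varepsilon$-factors and the modified Euler factors at $p$ are units, or account for them.

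Two smaller corrections. First, Dasgupta factors the full, unprojected Rankin--Selberg function, not a ``symmetric-square projection'' of it; that is exactly why the Kubota--Leopoldt factor survives. Second, in Step~1 the quotient must be $e_\eta H^1_{\Gr,1}(\Q_\infty,\mbT)/\Lambda_\mbf[\![\Gamma_1]\!]\cdot c_1$, which is torsion by Theorem~\ref{to complete}(1). It should not be $H^1_{\Iw}(\Q(\Bmu_{p^\infty}),\mbT)^\eta/\Lambda_\mbf[\![\Gamma_1]\!]\cdot c_1$, which need not be torsion.
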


It is worth remarking that the presence of the Kubota--Leopoldt factor in Theorem~\ref{C-intro} does not reflect a weakness of the explicit reciprocity law; rather, it is a manifestation of an apparently unavoidable structural limitation in the Kolyvagin system machinery, which might be referred to as a ``reducibility obstruction''. Since the Rankin--Selberg representation is reducible modulo $p$, it fails the basic hypotheses required by the Mazur--Rubin theory (\cite{MR_KS}). We are thus forced to project the Beilinson--Flach classes to the irreducible symmetric square lattice before generating the Kolyvagin system. This yields a bound for $X(\mbT_\mbf/\Q_{\cyc})$, but prevents the cancellation of the undesired Dirichlet $L$-function term.

\subsection{Future applications}

Beyond the Iwasawa main conjecture, the Euler system constructed herein paves the way for deeper arithmetic investigations of Hida families. In particular, we anticipate that these classes can be utilized to construct theta elements \emph{\`a la} Mazur--Tate (\cite{MazurTate87}) and control Fitting ideals of Selmer groups at finite layers, generalizing conjectures of Kurihara to the higher-rank setting (see, \emph{e.g.}, \cite{kurihara-inv, kurihara-crelle, kurihara-PLMS, kurihara-munster}). We plan to investigate these questions in a future project.

\subsection{Outline of the paper}

In \S\ref{section: preliminaries}, we review the construction of the Beilinson--Flach classes for the Rankin--Selberg convolution of two Hida families. In \S\ref{S: results 1}, we perform the projection and the twist to construct the Euler system for $\Sym^2M(\mbf)^*$. In \S\ref{sec:Koly}, we prove the necessary big image results and obtain a bound on (dual) Selmer groups. We prove the algebraic functional equation via classical descent arguments in \S\ref{sec:AFE}. Finally, in \S\ref{div-sec}, we introduce the Selmer complexes, establish the derived functional equation and deduce our divisibility result towards the Iwasawa main conjecture.

\subsection*{Acknowledgements}
It is a pleasure to thank Kazim B\"uy\"ukboduk, Samit Dasgupta, Antonio Lei and David Loeffler for discussions and exchanges on some of the topics of this paper. 

\section{Preliminaries} \label{section: preliminaries}

In this section, we collect background material and preliminaries on modular curves, modular forms and their associated Galois representations.

\subsection{Modular curves}

Choose integers $M,N\geq 1$ such that $M+N\geq 5$ and $M|N$. For any $\Z[1/MN]$-scheme $\sS$, let $E/\sS$ be an elliptic curve and take $\sS$-valued points $e_1, e_2\in E(\sS)$ of order $M$ and $N$, respectively, with the property that the map 
\[ \Z/M\Z\times\Z/N\Z\longrightarrow E,\quad(m,n)\longmapsto me_1+ne_2 \]
is injective; there is an obvious notion of isomorphism of triples $(E/\sS,e_1,e_2)$ as above. The modular curve $Y(M,N)$ is the $\Z[1/MN]$-scheme representing the functor given on objects by
\[
\sS \longmapsto\bigl\{ \textrm{isomorphism classes of triples }(E/\sS,e_1,e_2)\bigr\}.
\]
As customary, we shall denote $Y(1,N)$ by $Y_1(N)$. Moreover, if we set
\[
\Gamma(M,N) \defeq \Bigl\{g\in \SL_2(\Z)\;\,\Big|\;\, g\equiv 1 \pmod{\bigl(\begin{smallmatrix}M & M\\ N & N\end{smallmatrix}\bigr)}\Bigr\}, 
\]
then there is an analytic uniformization
\[
Y(M,N)(\C) \simeq(\Z/M\Z)^\times \times \Gamma(M,N)\backslash \mathcal{H},
\]
where $\mathcal{H}\defeq\bigl\{z\in\C\mid\Im(z)>0\bigr\}$ is the complex upper half-plane. See, \emph{e.g.}, \cite[\S2.1]{Kato} and \cite[\S2.3]{KLZ17} for details.

\subsection{Coefficient sheaves} \label{section: coeff sheaves}

Let $p$ be a prime number. Write $\pi:\mathcal{E} \rightarrow Y_1(N)$ for the universal elliptic curve over $Y_1(N)$. Following \cite[p. 17]{KLZ17}, we consider the sheaf
\[
\sH_{\Q_p}\defeq\Bigl(R^1\pi_{*}\Q_p\Bigr)^{\!\vee}\simeq R^1\pi_* \Q_p(1)
\]
on $Y_1(N)[1/p]$, where we view $\Q_p$ as a constant sheaf on $\mathcal E$. Analogously, one can define $\sH_{\Zp}$ with $\Q_p$ replaced by $\Zp$: this is an \'etale $\Z_p$-sheaf of rank $2$ on $Y_1(N)[1/p]$. For any abelian group $H$, we define $\Sym^k H$ and $\TSym^k H$ as in \cite[\S2.2]{KLZ17}. Given an $h\in H$, we write
\[
h^{[k]}\defeq h^{\otimes k}\in\TSym^k H
\]
for the corresponding element of $\TSym^k H$. If $H$ is free of finite rank and $\{e_1, \ldots, e_d\}$ is a basis for $H$ over $\Z$, then a basis for $\TSym^k H$ over $\Z$ is given by
\[
\biggl\{e_1^{[n_1]}e_2^{[n_2]} \cdots e_d^{[n_d]} \;\Big|\; \sum_{i=1}^d n_i =k\biggr\}.
\]
There exists a natural ring homomorphism
\[ \Sym^k H\longrightarrow \TSym^k H,\quad e_1^{n_1} \cdots e_d^{n_d}\longmapsto k! e_1^{[n_1]} \cdots e_d^{[n_d]}
\]
that extends to a ring homomorphism 
\[ \mathrm{Sym}^\bullet H \longrightarrow \TSym^\bullet H, \]
where the direct sums $\Sym^\bullet H\defeq\oplus_{k \geq 0}\Sym^\mathrm{k} H $ and $\TSym^\bullet H\defeq\oplus_{k \geq 0}\TSym^\mathrm{k}H $ are equipped with natural ring structures. This homomorphism becomes an isomorphism in degrees up to $k$ after inverting $k!$.

For a regular $\Z[1/p]$-scheme $X$ and a locally constant \'etale sheaf $\mathcal{F}$ of $\Z/p^n\Z$-modules on $X$, we can similarly define an \'etale sheaf $\TSym^k\mathcal{F}$. This is a sheaf of symmetric tensors of degree $k$ over $\mathcal{F}$ that is isomorphic after inverting $k!$ to the $k$-th symmetric power.

\subsection{Galois representations of modular forms} \label{section: Galois rep}

Let $f\in S_{k+2}(\Gamma_0(N_f))$ be a normalized Hecke eigenform of weight $k+2\geq2$ and level $\Gamma_0(N_f)$, whose $q$-expansion is denoted by $f(q)=\sum_{n\geq1}a_n(f)q^n$. Let $L\defeq\Q\bigl(a_n(f)\mid n\geq1\bigr)$ be the Hecke field of $f$, \emph{i.e.}, the (totally real) number field generated over $\Q$ by the Fourier coefficients of $f$. Recall the sheaves $\sH_{\Q_p}$ and $\sH_{\Z_p}$ that were introduced in \S\ref{section: coeff sheaves}. For each prime $\fP$ of $L$ above $p$, write $L_\fP$ for the completion of $L$ at $\fP$ and denote by $M_{L_{\fP}}(f)$ the maximal subspace of the compactly supported cohomology
\[
H^1_{\text{\'et},c}\Bigl(Y_1(N_f)_{\overline{\Q}},\Sym^k\sH_{\Q_p}^\vee\Bigr)\otimes_{\,\Q_p}L_{\fP} 
\]
on which the Hecke operators $T_{\ell}$ for $\ell\nmid N_f$ and $U_{\ell}$ for $\ell\,|\,N_f$ act as multiplication by $a_{\ell}(f)$.

It is well known that $M_{L_{\fP}}(f)$ is a $2$-dimensional $L_{\fP}$-vector space equipped with a continuous action of the absolute Galois group $G_{\Q}\defeq\Gal(\overline\Q/\Q)$. The Galois representation $M_{L_{\fP}}(f)$ is unramified outside the finite set of places of $L$ dividing $pN_f\infty$. The dual module $M_{L_{\fP}}(f)^*$, endowed with the contragredient action of $G_\Q$, can be identified with the maximal quotient of
\[
H^1_{\text{\'et}}\Bigl(Y_1(N_f)_{\overline{\Q}},\TSym^k(\sH_{\Q_p})(1)\Bigr)\otimes_{\,\Q_p} L_{\fP}
\]
on which the dual Hecke operators $T'_{\ell}$ and $U'_{\ell}$ (see \cite[Definition~2.4.3 and \S2.8]{KLZ20}) act as multiplication by $a_{\ell}(f)$. Finally, let $\cO_{\fP}$ be the valuation ring of $L_{\fP}$ and write $M_{\cO_{\fP}}(f)^*$ for the $\cO_{\fP}$-lattice in $M_{L_{\fP}}(f)^*$ generated by the image of the integral \'etale cohomology group
\[
H^1_{\text{\'et}}\Bigl(Y_1(N_f)_{\overline{\Q}},\TSym^k(\sH_{\Z_p})(1) \Bigr)\otimes_{\,\Zp}\cO_{\fP}.
\]

\subsection{Hida families, Galois representations and specializations}

We briefly recall basic facts on Hida families of modular forms and their Galois representations. 

\subsubsection{Cohomology of modular curves}

Let $N\geq3$ be an integer and let $p\geq5$ be a prime such that $p\nmid N$. With notation as above, write $e^\prime_{\ord}\defeq\lim_{n\rightarrow \infty} (U_p^\prime)^{n!}$ for Hida's ordinary idempotent attached to $U_p^\prime$ and, following \cite[Proposition~7.2.1]{KLZ17}, define the module
\[
H^{1}_{\ord}(Np^\infty)\defeq e'_{\ord}\cdot\varprojlim_r H^1_{\text{\'et}}\Bigl(Y_1(Np^r)_{\overline{\Q}},\Zp(1)\Bigr)\simeq e'_{\ord}\cdot H^1_{\text{\'et}}\Bigl(Y_1(Np)_{\overline{\Q}},\Lambda\bigl(\sH_{\Zp}\langle t_{Np}\rangle\bigr)(1)\Bigr),
\]
where the sheaf $\Lambda\bigl(\sH_{\Zp}\langle t_{Np}\rangle\bigr)$ is defined in \cite[p.~33]{KLZ17}. It is proved in \cite[Proposition~7.2.1, (1)]{KLZ17} that $H^{1}_{\ord}(Np^\infty)$ is finitely generated and projective over $\Zp\llbracket \Zp^\times \rrbracket$. Moreover, $u\in \Zp^\times$ acts on $H^1_{\text{\'et}}\Bigl( Y_1(Np^r)_{\overline{\Q}},\Zp(1)\Bigr)$ as the diamond operator $\langle u^{-1}\rangle_{p^r}$. In view of this fact, we henceforth write $\Lambda_D = \Zp\llbracket \Zp^\times \rrbracket$, where the subscript $D$ stands for ``diamond operator''.

Let $I_{k,r}$ be the ideal of $\Lambda_D$ generated by ${[1+p^r]}-(1+p^r)^k$ for some $k\geq 0$ and $r\geq 1$, as in \cite[Proposition~7.2.1, (4)]{KLZ17}. The change of coefficient sheaves is given by the $k$-th moment map, which is the map 
\[
\mom^k: \Lambda\bigl(\sH_{\Zp}\langle t_{Np}\rangle\bigr)(1) \longrightarrow  \TSym^k(\sH_{\Zp})(1)
\]
defined in  \cite[Notation~4.4.3]{KLZ17}. This map induces an isomorphism of $\Zp$-modules
\[
H^1_{\ord}(Np^\infty)\big/I_{k,r} \overset\simeq\longrightarrow e'_{\ord} H^1_{\text{\'et}}\Bigl(Y_1(Np^r)_{\overline{\Q}}, \TSym^k(\sH_{\Zp})(1)\Bigr)
\]
(see \cite[Proposition~7.2.1, (4)]{KLZ17}).

\subsubsection{Hecke algebra and Hida families}

In this article, we denote by $\T_{Np^\infty}$ the \emph{Hecke algebra} generated over $\Z$ by the Hecke operators $T'_n$ for all $n\geq 1$. Since $\T_{Np^\infty}$ is a finite, projective $\Lambda_D$-algebra, it is isomorphic to the direct product of its localizations at its finitely many maximal ideals. Following the terminology of \cite{KLZ17}, these maximal ideals are referred to as \emph{Hida families}. Notice that there is a natural action of Hecke operators that turns $H^1_{\ord}(Np^\infty)$ into a $\T_{Np^\infty}$-module.

\subsubsection{$\Lambda$-adic Galois representations} \label{sec:lambdadic}

Let $\mbf$ be a Hida family of tame level $N_\mbf$ coprime to $p$ and dividing $N$. It is a foundational result of Hida (\cite{hida86b}) that associated with $\mbf$ there is a ``$\Lambda$-adic'' Galois representation; this representation, denoted by $M(\mbf)^*$ or $\rho_\mbf$, can be realized as a quotient of the \'etale cohomology group
\[
e'_{\ord} H^1_{\text{\'et}}\Bigl(Y_1(Np)_{\overline{\Q}},\Lambda\bigl(\sH_{\Zp}\langle t_{Np}\rangle\bigr)(1)\Bigr).
\]
More precisely, following \cite[Definition 7.2.5]{KLZ17}, we set
\[
M(\mbf)^* \defeq {H^{1}_{\ord}(Np^\infty)}_{\mbf},
\]
where the right-hand side denotes the submodule of $H^{1}_{\ord}(Np^\infty)$ on which the Hecke algebra $\T_{Np^{\infty}}$ acts via its localization at the maximal ideal $\mathfrak{m}_f$ associated with the residual Galois representation of $f$ (equivalently, the direct summand of $H_{\ord}^{1}(Np^{\infty})$ obtained by localizing at the maximal ideal $\mathfrak{m}_f$). Write $\Lambda_{\mbf}$ for the corresponding localization of $\T_{Np^\infty}$.

Henceforth, as in \cite[Definition 7.2.6]{KLZ17}, we make the following assumptions on $\mbf$:
\begin{enumerate}
\item $\mbf$ is \emph{non-Eisenstein} modulo $p$, \emph{i.e.}, the residual Galois representation $\rho_{\mbf}$ attached to $\mbf$ is irreducible (\emph{cf.} \cite[Definition~7.2.6]{KLZ17});
\item $\mbf$ is \emph{$p$-distinguished}, \emph{i.e.}, the semisimplification of $\overline{\rho_{\mbf}}|_{G_{\Q_p}}$ is the direct sum of two distinct characters.
\end{enumerate}
It is a consequence of results of Wiles that, under these assumptions, the ring $\Lambda_{\mbf}$ is Gorenstein and $M(\mbf)^*$ is free over $\Lambda_{\mbf}$ (see, \emph{e.g.}, \cite[Proposition~3.3.1]{EPW}, \cite[Theorem~4.3.4]{LLZ}, \cite[\S1]{ohta}). The family $\mbf$ being non-Eisenstein implies that $\mbf$ is cuspidal, so all the arithmetic specializations of $\mbf$ will be cuspidal.

\subsubsection{Arithmetic primes}

An \emph{arithmetic prime} of $\Lambda_{\mbf}$ is a prime ideal of height 1 over $I_{k,r}$;
it corresponds to an eigenform $f$ of weight $k+2\geq 2$ and level $Np^r$. Notation being as in \S\ref{section: Galois rep}, this comes together with a choice of a prime $\fP$ of $L$ above $p$ at which $f$ is ordinary. 

\subsubsection{Specializations}

The newform $f$ is a \emph{specialization} of the family $\mbf$: there is a specialization isomorphism
\[
\Sp_f: M(\mbf)^* \otimes_{\Lambda_{\mbf}} \cO_{\fP} \xrightarrow{\simeq} M_{\cO_{\fP}}(f)^*
\]
that fits into a commutative square 
\[
\xymatrix@C=35pt@R=35pt{H^1_{\ord}(Np^\infty)\ar[r]\ar[d]^-{\mom_k} & M(\mbf)^* \ar[d]^-{\Sp_f} \\
H^1_{\text{\'et}}\Bigl(Y_1(Np^r)_{\overline{\Q}},\TSym^k\sH_{\Zp}(1)\Bigr) \ar[r]
& M_{\cO_{L,\fP}}^*}
\]
in which $\TSym^k \sH_{\Z_p}$ is defined like $\TSym^k \sH_{\Q_p}$ from \S\ref{section: coeff sheaves} (\emph{cf.} \cite[\S7.3]{KLZ17}).

\subsection{Beilinson--Flach classes in Hida families}

Following \cite[\S1.4]{KLZ17}, we briefly review the construction of Beilinson--Flach classes.

\subsubsection{Beilinson--Flach classes: outline of the construction} \label{section: outline Section 1.4}

Let  $c>1$ be an integer such that $(c,6pN)=1$. Set $Y\defeq Y(M,N)[1/p]$ and write $Y^2$ for the self-product of $Y$ over $\mathbb{Z}[1/N, \Bmu_M]$.
For $M\,|\,N$, one can construct from the Eisenstein--Iwasawa class the Rankin--Iwasawa class
\[
\leftindex_c{\mathcal{RI}}^{[j]}_{M,n,a}\in H^3_{\text{\'et}}\Bigl(Y(M,N)^2_{\Z[1/MNp]}, \Lambda\bigl(\sH_{\Z_p}\langle t_N\rangle\bigr)^{[j,j]}(2-j)\Bigr),
\]
where 
\[
\Lambda\bigl(\sH_{\Z_p}\langle t_N\rangle\bigr)^{[j,j]}\defeq\Lambda\bigl(\sH_{\Z_p}\langle t_N\rangle\bigr)^{[j]} \boxtimes \Lambda\bigl(\sH_{\Z_p}\langle t_N\rangle\bigr)^{[j]}
=\Bigl(\Lambda(\sH_{\Z_p}\langle t_N\rangle) \otimes \TSym^j \sH_{\Zp}\Bigr)^{\boxtimes\,2}
\]
is the coefficient sheaf for $Y^2$ defined in \cite[p.~44]{KLZ17}.

\begin{prop}[Kings--Loeffler--Zerbes]\label{prop: alternating}
There is an equality
\[
\rho^*\Bigl(\leftindex_c{\mathcal{RI}}^{[j]}_{M,n,a}\Bigr)=(-1)^j\leftindex_c{\mathcal{RI}}^{[j]}_{M,n,-a},
\]
where $\rho$ is the involution of $Y^2$ that interchanges the two components.
\end{prop}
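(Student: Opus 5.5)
We reduce the statement to the definition of the Rankin--Iwasawa class in \cite[\S5]{KLZ17} and track how each ingredient behaves under the swap $\rho$. Recall that $\leftindex_c{\mathcal{RI}}^{[j]}_{M,n,a}$ is built in three steps. First take the Eisenstein--Iwasawa class $\leftindex_c{\mathcal{EI}}_{b,N}$ on $Y(M,N)$, an element of the degree-one cohomology with coefficients in $\Lambda(\sH_{\Z_p}\langle t_N\rangle)(1)$. Next pull it back along a map $\iota_{M,N}\circ(1\times u_{a})$ from a suitable modular curve $Y(M,N)$ (or its level-$Np^n$ variant) into $Y^2$. Here $u_a$ is the twisting automorphism $\bigl(\begin{smallmatrix}1 & a/M\\ 0& 1\end{smallmatrix}\bigr)$ acting on the second factor, and we compose with the Clebsch--Gordan map
\[
\mathrm{CG}^{[j]}:\Lambda\bigl(\sH_{\Z_p}\langle t_N\rangle\bigr)\longrightarrow \Lambda\bigl(\sH_{\Z_p}\langle t_N\rangle\bigr)^{[j,j]}(-j).
\]
Finally apply the Gysin pushforward along the closed embedding of codimension one, which shifts cohomological degree from $1$ to $3$ and the twist by $1$. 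Since $\rho^*$ is compatible with pushforwards along closed embeddings via base change, the proof reduces to three checks. (i) $\rho\circ(\Delta\circ(1\times u_a))$ equals $\Delta\circ(u_a\times 1)$. (ii) $u_a\times 1$ differs from $1\times u_{-a}$ by the diagonal action of $u_a$, which acts trivially on the pulled-back Eisenstein class up to the relevant invariance. (iii) $\rho^*\circ\mathrm{CG}^{[j]}=(-1)^j\,\mathrm{CG}^{[j]}$ on the coefficient sheaves.

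For (i) and (ii), $\rho$ composed with the diagonal embedding is again the diagonal, so $\rho\circ(1\times u_a)\circ\Delta=(u_a\times 1)\circ\Delta$. Then write $(u_a\times1)=(u_a\times u_a)\circ(1\times u_{-a})$. The diagonal automorphism $u_a\times u_a$ restricts on the diagonal to the automorphism $u_a$ of $Y(M,N)$. We need that $u_a^*$ fixes the Eisenstein--Iwasawa class, or at least that the composite class is unchanged. The Eisenstein class on $Y(M,N)$ is defined from the section $e_2$ of order $N$, and $u_a$ only moves $e_1$ modulo $M$-torsion, so I expect $u_a^*\leftindex_c{\mathcal{EI}}=\leftindex_c{\mathcal{EI}}$ by its definition as a pullback along a torsion section. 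I also expect the compatible action on $\Lambda(\sH\langle t_N\rangle)$, since $u_a$ acts unipotently and fixes $t_N$. This gives $\rho^*$ of the $a$-class equal to the $(-a)$-class up to the coefficient sign.

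For (iii), the Clebsch--Gordan map is given by $x\mapsto x\cdot (x_1\otimes y_2-y_1\otimes x_2)^{[j]}$, cup product with the $j$-th divided power of the Weil-pairing element $\xi\in\sH\boxtimes\sH(-1)$. Under the swap $\rho$ of the two tensor factors, $\xi$ goes to $-\xi$ because the Weil pairing is alternating. Therefore $\xi^{[j]}$ goes to $(-1)^j\xi^{[j]}$. The $\Lambda$-parts are symmetric, since $\Lambda(\sH\langle t_N\rangle)$ is pulled back diagonally via comultiplication. This yields the sign.

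The main obstacle I anticipate is step (ii): the precise normalization of $u_a$ in \cite{KLZ17} (whether it acts on the second factor, and on which level structure), together with verifying invariance of the Eisenstein--Iwasawa class under it. If direct invariance fails, I would instead compare both sides after applying the moment maps $\mom^k$ for all $k$. Those specializations are the classes of \cite{KLZ20}, where the analogous symmetry ($\rho^*\,\mathrm{Eis}^{[k,k',j]}_{a}=(-1)^j\mathrm{Eis}^{[k',k,j]}_{-a}$) is established. I would then conclude by the fact that an element of the inverse-limit cohomology is determined by its images under all $\mom^k$.
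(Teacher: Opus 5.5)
Your argument is correct. The sign comes from $(x\otimes y-y\otimes x)^{[j]}\mapsto(-1)^j(x\otimes y-y\otimes x)^{[j]}$ under the swap, the comultiplication on $\Lambda(\sH_{\Zp}\langle t_N\rangle)$ being symmetric. Next, $\rho\circ\Delta=\Delta$ moves $u_a$ to the other factor. Finally, $(u_a\times 1)=(1\times u_{-a})\circ(u_a\times u_a)$ together with $(u_a\times u_a)\circ\Delta=\Delta\circ u_a$ converts it back to $u_{-a}$. This last step uses that ${}_c\mathcal{EI}_{1,N}$ and the sheaf $\Lambda(\sH_{\Zp}\langle t_N\rangle)$ are pulled back from $Y_1(N)$, while $u_a$ fixes $(E,e_2)$, hence $t_N$, and preserves the structure map to $\Bmu_M$.

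The paper gives no argument of its own, only the citation \cite[Proposition~5.2.3, (i)]{KLZ17}, and your direct unwinding of the definition is the natural verification of that statement. Your moment-map fallback is unnecessary; it would also need more justification than you give, since the maps $\mom^k$ need not jointly detect elements of the $\Lambda$-adic cohomology.
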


\begin{proof}
See \cite[Proposition~5.2.3, (i)]{KLZ17}.
\end{proof}

Moreover, there is a natural map
\[
Y(m,mN)^2 \longrightarrow Y_1(N)^2 \times \Spec\biggl( \Z\biggl[\Bmu_m,\frac{1}{mN}\biggr]\biggr).
\]
Roughly speaking, as explained in \cite[p.~10 and Theorem~6.3.4]{KLZ17}, the  Rankin--Iwasawa class gives, when projected to the ordinary part under the map above, the \emph{Beilinson--Flach class}
\[
\leftindex_c{\mathcal{BF}}_{m,N,a}\in(e'_{\ord},e'_{\ord})\cdot H^3_{\text{\'et}}\Bigl(Y_1(N)^2_{\Z[\Bmu_m, a/mNp]},\Lambda\bigl(\sH_{\Zp}\langle t_N\rangle\bigr)^{\boxtimes\,2} \otimes \Lambda_{\Gamma}(-\mbj)\Bigr).
\]
Here $\Lambda_{\Gamma}(-\mbj)$ is a pro-\'etale sheaf defined in \cite[Notation 6.3.3]{KLZ17}.
Its stalk at a geometric point is isomorphic to the Iwasawa algebra $\Lambda(\Gamma)$ of $\Gamma\defeq\Gal\bigl(\Q(\Bmu_{p^\infty})/\Q\bigr)\simeq\Zp^\times$, with $\Gamma$ acting via the inverse of the canonical character 
\[
\mbj:\Gamma\longrightarrow\Lambda(\Gamma)^\times
\]
that views the elements of $\Gamma$ as units of $\Lambda(\Gamma)$.

\subsubsection{The classes $\leftindex_c{\mathcal{BF}}_m^{\mbf,\mbf}$} \label{classes-subsubsec}

We specialize \cite[Definition 8.1.1]{KLZ17} to the $\mbf=\mbg$ case. Combining the Hochschild--Serre spectral sequence and the K\"unneth isomorphism yields a map
\begin{equation*}
\begin{split}
H^3_{\text{\'et}}\Bigl(& Y_1(N)^2_{\Z[\Bmu_m,a/mNp]},\Lambda\bigl(\sH_{\Zp}\langle t_N\rangle\bigr)^{\boxtimes\,2} \otimes \Lambda_{\Gamma}(-\mbj)\Bigr)\\
&\longrightarrow H^1\biggl(\Z\biggl[ \Bmu_m, \frac{1}{mNp}\biggr], H^1_{\text{\'et}}\Bigl(Y_1(N)^2_{\Z[\Bmu_m, a/mNp]}, \Lambda\bigl(\sH_{\Zp}\langle t_N\rangle\bigr)\Bigr)^{\!{\otimes\,2}} \otimes\Lambda_{\Gamma}(-\mbj)\biggr).
\end{split}
\end{equation*}
After projecting to the ordinary part, we obtain a class
\[
_c\mathcal{BF}_m^{\mbf,\mbf} \in H^1_{\text{\'et}}\Bigl(\Z[\Bmu_m, 1/mNp], M(\mbf)^* \otimes M(\mbf)^*\otimes\Lambda(\Gamma)(-\mbj)\Bigr).
\]
Let $\mbf$ be a Hida family whose tame level $N_\mbf$ divides $N$ and is not divisible by $p$. Define
\[
M(\mbf\otimes \mbf)^* \defeq M(\mbf)^* \hat{\otimes}_{\Zp} M(\mbf)^*,
\]
which is a $\Lambda_{\mbf}\,\hat{\otimes}\,\Lambda_{\mbf}$-module that is finite and projective over $\Lambda_D\,\hat{\otimes}\,\Lambda_D$. Then, via the pushforward along the degeneracy map
\[
(\pr_1 \times \pr_1)_*: Y_1(Np)^2 \longrightarrow Y_1(N_\mbf p) \times Y_1(N_\mbf p)
\]
and the K{\"u}nneth formula, we obtain a Galois-equivariant projection map 
\[
\pr_{\mbf,\mbf}: H^2_{\text{\'et}}\Bigl( Y_1(Np)^2_{\overline{\Q}}, \Lambda\bigl(\sH_{\Zp}\langle t_{Np}\rangle\bigr)^{\boxtimes \,2}(2)\Bigr) \longrightarrow M(\mbf \otimes \mbf)^*\simeq M(\mbf)^*\hat{\otimes} M(\mbf)^*
\]
(\cite[p.~72]{KLZ17}).
Thus, $M(\mbf\otimes \mbf)^*$ is a direct summand of $H^2_{\text{\'et}}\Bigl( Y_1(Np)^2_{\overline{\Q}}, \Lambda\bigl( \sH_{\Zp}\langle t_{Np}\rangle\bigr)^{\boxtimes\, 2}(2)\Bigr)$.

For all specializations $f,f'$ of $\mbf$, there is a commutative square 
\[
\xymatrix@C=40pt@R=40pt{
H^2_{\text{\'et}}\Bigl(Y_1(Np)^2_{\overline{\Q}},\Lambda\bigl( \sH_{\Zp}\langle t_{Np}\rangle\bigr)^{\boxtimes\,2}(2)\Bigr) \ar[r]^-{\pr_{\mbf,\mbf}} \ar[d]^-{\mom^k\boxtimes \mom^{k'}} 
& M(\mbf\otimes \mbf)^* \ar[d]^-{\Sp_f\otimes \Sp_{f'}} \\
H^2_{\text{\'et}}\Bigl(Y_1(Np^r)^2_{\overline{\Q}}, \TSym^{{[k,k']}}(\sH_{\Zp})(2)\Bigr) \ar[r]^-{\pr_{f,f'}} 
& M_{L,\fP}(f\otimes f')^*
}
\]
(\cite[\S8.1]{KLZ17}). Here the integer $r\geq1$ is large enough so that $f,f'$ have level $Np^r$ and 
\[
\TSym^{{[k,k']}}(\sH_{\Zp})\defeq\TSym^{{k}}(\sH_{\Zp}) \boxtimes \TSym^{{k'}}(\sH_{\Zp})
\]
is a sheaf on $Y_1(Np^r)^2$ (see, \emph{e.g.}, \cite[p.~4, \S3.2 and \S8.1]{KLZ17}).
For the goals of this paper, we are interested in the case where $f=f^\prime$.

Now let $m\geq 1$ and $c>1$ be integers such that $p\nmid m$ and $(c,6mpN_\mbf)=1$. Define
\[
_c\mathcal{BF}_m^{\mbf,\mbf} \in H^1\biggl(\Z\biggl[\frac{1}{mpN_\mbf^2},\Bmu_m\biggr], M(\mbf\otimes\mbf)^*\otimes\Lambda(\Gamma)(-\mbj)\!\biggr)
\]
to be the image of the class $_c\mathcal{BF}_{m,pN,1}$ under the map $\pr_{\mbf,\mbf}$.
We write $_c\mathcal{BF}^{\mbf,\mbf}$ for $_c\mathcal{BF}^{\mbf,\mbf}_1$.

It can be checked that the classes $_c\mathcal{BF}_m^{\mbf,\mbf}$ are independent of the choice of $N$ (\cite[Remark 8.1.2]{KLZ17}).
Now define $Q_{\ell}\in \Lambda_{\mbf}\hat{\otimes}\Lambda_{\mbf}[X,X^{-1}]$ by setting
\[ \begin{split}
    Q_\ell(X,X^{-1})\defeq&-X^{-1} + a_{\ell}(\mbf)^2+ \Bigl( (\ell+1)\ell^{\mbk + \mbk'} \varepsilon_{\mbf}(\ell)^2 - \bigl(\ell^{\mbk}+\ell^{\mbk'}\bigr)\varepsilon_{\mbf}(\ell)a_{\ell}(\mbf)^2 \Bigr)X\\&+ \ell^{\mbk + \mbk'} a_{\ell}(\mbf)^2 \varepsilon_{\ell}(\ell)^2X^2 - \ell^{1 + 2\mbk + 2\mbk'}\varepsilon_{\mbf}(\ell)^4X^3,
    \end{split}
\]
where $\varepsilon_\mbf:(\Z/N_\mbf\Z)^\times \rightarrow \Lambda_{\mbf}^\times$ is the prime-to-$p$ part of the Nebentypus of $\mbf$.

\begin{prop}[Kings--Loeffler--Zerbes] \label{originalBF}
Assume $p\nmid N_\mbf$. Let $\ell$ be a prime number such that $\ell\nmid cmpN_\mbf$ and let $\sigma_\ell\in\Gal(\Q(\Bmu_m)/\Q)$ denote the arithmetic Frobenius. Then
\[
\Norm_{m}^{\ell m}\Bigl(\leftindex_c{\mathcal{BF}}_{\ell m}^{\mbf,\mbf}\Bigr) =Q_\ell\Bigl(\ell^{-\mbj}\sigma_{\ell}^{-1}\Bigr)\cdot\leftindex_c{\mathcal{BF}}_m^{\mbf,\mbf}.
\]
\end{prop}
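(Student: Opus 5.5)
This is the specialization $\mbg=\mbf$ of the tame norm relation for Beilinson--Flach classes in two Hida families, \cite[Theorem~8.1.4]{KLZ17} (ultimately resting on \cite[Theorem~5.7.6 and \S6.3]{KLZ17}). The plan is to deduce it from the general statement rather than reprove it. The key steps are as follows.

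First, recall that \cite{KLZ17} define, for Hida families $\mbf,\mbg$ of tame levels prime to $p$, classes
\[
\leftindex_c{\mathcal{BF}}_m^{\mbf,\mbg}\in H^1\Bigl(\Z\bigl[\tfrac{1}{mpN_\mbf N_\mbg},\Bmu_m\bigr],M(\mbf\otimes\mbg)^*\otimes\Lambda(\Gamma)(-\mbj)\Bigr).
\]
For $\ell\nmid cmpN_\mbf N_\mbg$, these satisfy
\[
\Norm_m^{\ell m}\Bigl(\leftindex_c{\mathcal{BF}}_{\ell m}^{\mbf,\mbg}\Bigr)=P_\ell\bigl(\ell^{-\mbj}\sigma_\ell^{-1}\bigr)\cdot\leftindex_c{\mathcal{BF}}_m^{\mbf,\mbg}.
\]
Here $P_\ell$ is the degree-$4$ Euler factor of $M(\mbf\otimes\mbg)$ at $\ell$, written in the normalized form $-X^{-1}\cdot\det\bigl(1-\mathrm{Frob}_\ell^{-1}X\mid M(\mbf\otimes\mbg)\bigr)$, or whatever normalization \cite{KLZ17} use. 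One must check that the classes $\leftindex_c{\mathcal{BF}}_m^{\mbf,\mbf}$ defined above via $\pr_{\mbf,\mbf}$ and $\leftindex_c{\mathcal{BF}}_{m,pN,1}$ coincide with the $\mbg=\mbf$ instance of that construction. This is immediate from the definitions, together with independence of $N$ \cite[Remark~8.1.2]{KLZ17}, which lets us take the same auxiliary level for both factors.

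Second, identify $Q_\ell$ with $P_\ell$ specialized at $\mbg=\mbf$. The coefficients of the Rankin--Selberg Euler factor are symmetric functions of the Satake parameters $\alpha_\ell,\beta_\ell$ of $\mbf$ and $\alpha'_\ell,\beta'_\ell$ of $\mbg$, with
\[
\alpha_\ell+\beta_\ell=a_\ell(\mbf),\qquad \alpha_\ell\beta_\ell=\ell^{\mbk+1}\varepsilon_\mbf(\ell),
\]
and similarly for $\mbg$ with weight variable $\mbk'$. Expanding the product $\prod(1-\alpha\alpha' X)$ over the four pairs, with the twist by $\ell^{-1}$ built into the normalization, gives the quartic whose coefficients are $a_\ell(\mbf)a_\ell(\mbg)$, $(\ell+1)\ell^{\mbk+\mbk'}\varepsilon_\mbf\varepsilon_\mbg-\ell^{\mbk}\varepsilon_\mbf a_\ell(\mbg)^2-\ell^{\mbk'}\varepsilon_\mbg a_\ell(\mbf)^2$, and so on. Setting $\mbg=\mbf$, while keeping the two weight variables $\mbk,\mbk'$ for the two tensor factors of $\Lambda_\mbf\hat\otimes\Lambda_\mbf$, yields exactly $Q_\ell$. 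The term written $\varepsilon_\ell(\ell)^2$ should be read as $\varepsilon_\mbf(\ell)^2$.

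Third, the hypothesis $\ell\nmid cmpN_\mbf$ matches the hypothesis of \cite{KLZ17} when $N_\mbg=N_\mbf$, so the relation holds as stated.

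The main obstacle is purely one of bookkeeping: the normalization conventions. Several things must be matched carefully:
\begin{itemize}
\item arithmetic versus geometric Frobenius ($\sigma_\ell^{-1}$ against $\mathrm{Frob}_\ell$ acting on $M(\mbf)^*$ rather than $M(\mbf)$);
\item the sign and power of $X$ in the leading term $-X^{-1}$;
\item the twist $\ell^{-\mbj}$ coming from $\Lambda(\Gamma)(-\mbj)$.
\end{itemize}
I would settle these by checking that both sides agree after specialization at classical points $(f,f',j)$, where the relation reduces to the known norm relation of \cite{LLZ} for $f\otimes f'$. The family-level identity then follows by density of arithmetic points in $\Lambda_\mbf\hat\otimes\Lambda_\mbf\hat\otimes\Lambda(\Gamma)$, since the cohomology is finitely generated and the polynomial identity is determined by its classical specializations.
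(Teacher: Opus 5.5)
Your overall strategy is the paper's: its proof consists only of quoting \cite[Theorem~8.1.3, (1)]{KLZ17}, the tame norm relation for two Hida families, and setting $\mbg=\mbf$. Note that 8.1.4 is the accompanying remark, not the theorem. However, your Steps 1 and 2 rest on a false claim. The polynomial in that theorem is \emph{not} the normalized Euler factor of $M(\mbf\otimes\mbg)$, and $Q_\ell$ cannot be obtained by expanding Satake parameters. The second elementary symmetric function of $\alpha\alpha',\alpha\beta',\beta\alpha',\beta\beta'$ is
\[
\alpha\beta\,a_\ell(\mbg)^2+\alpha'\beta'\,a_\ell(\mbf)^2-2\,\alpha\beta\alpha'\beta'.
\]
So after any rescaling $X\mapsto tX$, the coefficient of $\varepsilon_\mbf(\ell)\varepsilon_\mbg(\ell)$ in the linear term is $2$ times a power of $\ell$, never $(\ell+1)\ell^{\mbk+\mbk'}$. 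Similarly, every polynomial $-X^{-1}P(tX)$ with $P$ a Rankin--Selberg Euler factor has coefficients $c_0,c_2,c_3$ (of $X^0,X^2,X^3$) satisfying $c_2^2=-c_3c_0^2$. For $Q_\ell$ the two sides differ by a factor of $\ell$.

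What actually holds is only $Q_\ell(X)\equiv -X^{-1}P_\ell(X)\pmod{\ell-1}$ (\cite[Remark~8.1.4]{KLZ17}). This is precisely why, in step (i) of the proof of Theorem~\ref{LZ 4.1.6}, the paper must modify the classes by units of the group ring via \cite[Lemma~9.6.1]{Rub_ES} to reach the true Euler factor. That is a change of the classes, not of normalizations. In particular, if the general relation held with the Euler factor as you assert, the relation with $Q_\ell$ would not follow from it.

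The repair is to delete Step 2 and the normalization bookkeeping entirely. The Kings--Loeffler--Zerbes theorem is already stated with a polynomial in $(\Lambda_\mbf\hat\otimes\Lambda_\mbg)[X,X^{-1}]$ built from $a_\ell(\mbf)a_\ell(\mbg)$, $\varepsilon_\mbf(\ell)\varepsilon_\mbg(\ell)$, $\ell^{\mbk}$ and $\ell^{\mbk'}$. The displayed $Q_\ell$ is literally its specialization at $\mbg=\mbf$, with $\varepsilon_\ell(\ell)^2$ read as $\varepsilon_\mbf(\ell)^2$, as you rightly noted. Your matching of hypotheses and your identification of the classes via \cite[Remark~8.1.2]{KLZ17} are fine.

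Your fallback check at classical points cannot rescue Step 2 either. The identity $Q_\ell=-X^{-1}P_\ell$ already fails as a polynomial identity at every classical point, by the coefficient test above, so specialization can confirm the relation with $Q_\ell$ but never that identity. Moreover, the density argument would need an injectivity statement for specialization on $H^1$ over a three-variable Iwasawa algebra, which you have not supplied. It is unnecessary once the family-level theorem is quoted.
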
 

\begin{proof} This is an incarnation of \cite[Theorem~8.1.3, (1)]{KLZ17} in our setting.
\end{proof}

\section{An Euler system for \texorpdfstring{$\Sym^2 M(\mbf)^*$}{}} \label{S: results 1}

Here, we introduce an Euler system for the symmetric square representation $\Sym^2 M(\mbf)^*$.

\subsection{Filtrations on Galois representations} \label{filt-subsec}

Consider the free $\Lambda_{\mbf}\,\hat{\otimes}\,\Lambda_{\mbf}$-module $M(\mbf\otimes \mbf)^*$ and the Galois representation $M(\mbf \otimes \mbf)^* [1/p]$ over the quotient field of $\Lambda_{\mbf}\,\hat{\otimes}\,\Lambda_{\mbf}$.
It  decomposes as follows into its symmetric square and alternating (or anti-symmetric) parts:
\[
M(\mbf \otimes \mbf)^*[1/p] = \Sym^2 M(\mbf)^* [1/p] \oplus \Exterior^2 M(\mbf)^*[1/p].
\]
We are assuming that the family $\mbf$ is non-Eisenstein, the residual Galois representation attached to $\mbf$ and the symmetric square representation associated to $\mbf$ are both irreducible (as this is true for the specialization at an arithmetic prime; see \cite[Note: 3.2.2]{LZ19} and \cite[p.~30, $(H.1^\sharp)$]{LZ19}). In this setting, $\Sym^2 M(\mbf)^*$ is the unique lattice inside $\Sym^2 M(\mbf)^* [1/p]$.
We have the following filtrations:
\begin{align*}
\sF^{++}M(\mbf\otimes\mbf)^* &\defeq \sF^+M(\mbf)^*\hat{\otimes} \sF^+M(\mbf)^*,\quad
\sF^{+-}M(\mbf\otimes\mbf)^* \defeq \sF^+M(\mbf)^* \hat{\otimes} \sF^-M(\mbf)^*,\\
\sF^{-+}M(\mbf\otimes\mbf)^* &\defeq \sF^-M(\mbf)^* \hat{\otimes} \sF^+M(\mbf)^*,\quad
\sF^{--}M(\mbf\otimes\mbf)^* \defeq \sF^-M(\mbf)^* \hat{\otimes} \sF^-M(\mbf)^*.
\end{align*}
These filtrations arise from those in \cite[Theorem~7.2.3]{KLZ17}; here we do not discuss them any further and simply refer to \cite{KLZ17} for their properties.

From here on, let $\psi$ be a Dirichlet character of conductor $N_\psi$ coprime to $p$. Let us define $\textbf{V}\defeq\Sym^2 M(\mbf)^*[1/p](1)(\psi)$ and consider the lattice
\[
\mbT\defeq\Sym^2 M(\mbf)^*(1)(\psi)\subset\textbf{V}.
\]
As before (\emph{cf.} \S\ref{section: outline Section 1.4}), let $c>1$ be an auxiliary integer with $(c,N_{\psi})=1$: it will essentially serve as a coprimality and normalization parameter. Moreover, set $\nu\defeq\psi(-1)$. The filtration on  $M(\mbf\otimes\mbf)^*$ induces a three-step filtration
\begin{align*}
\mbT=\sF^0 \mbT \supset \sF^1 \mbT \supset  \sF^2 \mbT \supset \sF^3 \mbT=0.
\end{align*}
One can write down $\sF^1$ and $\sF^2$ explicitly in terms of $\sF^\pm$ (see \cite[p.~31, equation (4.2)]{BO19}). We can also define the graded pieces $\Gr^i(\mathbf{T})\defeq\sF^i(\mathbf{T})/ \sF^{i+1}\mathbf{T}$. In particular, we have
\[
\Gr^0(\mbT)=\mbT/\sF^{1}(\mbT), 
\]
which projects isomorphically onto the quotient $ \sF^{--}M(\mbf \otimes\mbf)^*(1)$: this is analogous to what happens when one specializes a Hida family at an arithmetic prime (see, \emph{e.g.}, the proof of \cite[Proposition~4.2.1]{LZ19}).
Let 
\begin{equation} \label{Iw-eq}
H^1_{\Iw}\bigl(\Q(\Bmu_{mp^\infty}),\mbT\bigr)\defeq\varprojlim_r H^1\bigl(\Q(\Bmu_{mp^r}),\mbT\bigr) 
\end{equation} 
be the first Iwasawa cohomology group with values in $\mbT$, where the inverse limit is taken with respect to corestriction maps. Definition \eqref{Iw-eq} applies with $\Q(\Bmu_{mp^\infty})$ replaced by other fields or by products of finitely many fields, as will be clear later.

\subsection{An Euler system for \texorpdfstring{$\Sym^2 M(\mbf)^*$}{}}

Recall from \S\ref{filt-subsec} that, by definition, $\nu=\psi(-1)$. Set
\[
\sR\defeq\bigl\{\text{square-free integers $m\geq1$ such that $(m,6pcN_\mbf N_\psi)=1$}\bigr\}.
\]
Throughout this paper, write $\Q_{\cyc}$ for the unique $\Zp$-extension of $\Q$, so that $\Q_{\cyc}\subset\Q(\Bmu_{p^\infty})$. 

The following result, which corresponds to Theorem~\ref{A-intro} in the introduction, generalizes \cite[Theorem~4.1.6]{LZ19}. 

\begin{Th} \label{LZ 4.1.6}
There exists a family of cohomology classes ${\{c_m\}}_{m\in\sR}$ with
\[
c_m \in H^1_{\Iw}\bigl(\Q(\Bmu_{mp^\infty}),\mbT\bigr)^{\nu}
\]
such that $c_1 \neq 0$. Furthermore, if $\ell$ is a prime such that $m\in\sR$ and $\ell m\in\sR$, then
\[
\cores_{m}^{\ell m}(c_{\ell m}) = P_{\ell}\bigl( \sigma_{\ell}^{-1}\bigr)\cdot c_m,
\]
where $P_{\ell}$ is the Euler factor for $M(\mbf\otimes \mbf)(1)(\psi)$ given by
\[
P_{\ell}(\mbf \otimes\mbf \otimes \psi, X)\defeq\bigl(1-\ell^{\mbk-1} \psi\varepsilon_{\mbf}(\ell)X \bigr)\cdot P_{\ell}(\Sym^2\mbf\otimes\psi,X).
\]
In particular, the collection ${\{c_m\}}_{m\in\sR}$ is an Euler system.
\end{Th}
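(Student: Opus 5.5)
We follow the strategy of \cite[\S4.1]{LZ19}, now carried out over $\Lambda_\mbf\hat\otimes\Lambda_\mbf$ rather than over $\cO_\fP$.

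\emph{Step 1: restrict the classes.} We start from the classes $\leftindex_c{\mathcal{BF}}_m^{\mbf,\mbf}$ of \S\ref{classes-subsubsec}. We pull them back along the diagonal $\Lambda_\mbf\hat\otimes\Lambda_\mbf\to\Lambda_\mbf$, which amounts to setting $\mbk=\mbk'$. We then view $\Lambda(\Gamma)(-\mbj)$ as the cyclotomic deformation. This identifies the classes with elements of $H^1_{\Iw}\bigl(\Q(\Bmu_{mp^\infty}),M(\mbf)^*\otimes M(\mbf)^*(1)\bigr)$ (up to the usual twist shift). The identification uses Shapiro's lemma, as in \cite[Theorem~6.3.4]{KLZ17}.

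\emph{Step 2: twist by $\psi$.} We twist by $\psi$ by the standard procedure. Work at level $mN_\psi$, take the image over $\Q(\Bmu_{mN_\psi p^\infty})$, and apply the $\psi$-isotypic projection $\sum_{a\in(\Z/N_\psi)^\times}\psi(a)^{-1}\sigma_a$ followed by corestriction to $\Q(\Bmu_{mp^\infty})$. The coprimality built into $\sR$ keeps all the relevant primes unramified.

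\emph{Step 3: project to the symmetric square.} Since $p\ge7$ is odd, $\frac12(1\pm\rho)$ are idempotents on $M(\mbf)^*\otimes M(\mbf)^*$. The $(+)$ part is $\Sym^2M(\mbf)^*$ integrally, so no denominators appear. By Proposition~\ref{prop: alternating}, $\rho^*$ sends the class with parameter $a$ to $(-1)^j$ times the class with parameter $-a$. Passing to the family, this says that $\rho^*$ commutes with the classes up to the action of $\sigma_{-1}$ and the sign $(-1)^{\mbj}$. Consequently, the projection to $\Sym^2$ of the $\psi$-twisted class lands in the eigenspace where complex conjugation acts by $\nu=\psi(-1)$. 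We define $c_m$ to be this projection, possibly multiplied by an explicit unit or $c$-factor. It lies in $H^1_{\Iw}(\Q(\Bmu_{mp^\infty}),\mbT)^\nu$.

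\emph{Step 4: Euler system relation.} Apply Proposition~\ref{originalBF} and specialize $Q_\ell$ at $X=\ell^{-\mbj}\sigma_\ell^{-1}$, with $\mbk=\mbk'$. Twisting by $\psi$ replaces $\sigma_\ell$ by $\psi(\ell)\sigma_\ell$. Symmetric projection commutes with norms because $\rho$ is Galois-equivariant. What remains is to compare $Q_\ell$ with $P_\ell(\mbf\otimes\mbf\otimes\psi,\cdot)$. As in \cite[Theorem~3.5.9]{LZ19}, the polynomial $Q_\ell$ differs from the Rankin--Selberg Euler factor $P_\ell(\mbf\otimes\mbf,X)$ by a term divisible by $\ell-1$ (and by terms killed after the twist). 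The standard fix is the Kolyvagin/Rubin modification \cite[Lemma~9.6.1]{Rub00}: one alters the classes so that the relation holds with exactly $P_\ell$. The factorization
\[
P_\ell(\mbf\otimes\mbf\otimes\psi,X)=(1-\ell^{\mbk-1}\psi\varepsilon_\mbf(\ell)X)\,P_\ell(\Sym^2\mbf\otimes\psi,X)
\]
comes from the decomposition $\Sym^2\oplus\Exterior^2$, since $\Exterior^2M(\mbf)^*$ is the character $\det$. I expect this Euler factor bookkeeping to be routine.

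\emph{Step 5: non-triviality.} This is the main obstacle. To show $c_1\neq0$, it suffices to show that one specialization is nonzero. Under $\Sp_f\otimes\Sp_f$ and the moment maps (the commutative square in \S\ref{classes-subsubsec}), $c_1$ specializes to the Loeffler--Zerbes class for a $p$-ordinary classical $f$. That class is nonzero by \cite[Theorem~4.1.6]{LZ19}, via the explicit reciprocity law and the non-vanishing of $L(\Sym^2 f\otimes\psi,s)$ at a suitable twist (Dasgupta's factorization). The specialization maps must be compatible with the $\psi$-twist and with the symmetric projection. This holds because both commute with $\rho$ and with $\mom^k\boxtimes\mom^k$. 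The delicate point is the interpolation of the $c$-factors and of the twist by $\mbj$. I would handle these by choosing $c$ with $c^2\neq1$ and checking that the factor $(c^2-c^{-(\mbk-\mbj)}\ldots)$ specializes to a nonzero element at some arithmetic point.
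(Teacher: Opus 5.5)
Your proposal is correct and follows essentially the paper's route: Rubin's Lemma~9.6.1 to pass from $Q_\ell$ to $P_\ell$ modulo $\ell-1$, twisting at level $mN_\psi$ and corestricting to $\Q(\Bmu_{mp^\infty})$, Proposition~\ref{prop: alternating} to identify the symmetric part with the correct complex-conjugation eigencomponent, and non-vanishing of $c_1$ by specializing to the Loeffler--Zerbes class. Your explicit base change along $\Lambda_\mbf\hat\otimes\Lambda_\mbf\to\Lambda_\mbf$ is a useful clarification that the paper leaves implicit, since the swap of factors is only semilinear over $\Lambda_\mbf\hat\otimes\Lambda_\mbf$.

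The one point to tighten is that you use $\rho$ both for the swap $s$ of tensor factors and for $\rho^*$, whereas under K\"unneth $\rho^*$ acts as $-s$. Hence $s\bigl(\leftindex_c{\mathcal{BF}}_m^{\mbf,\mbf}\bigr)=-[\sigma]\cdot\leftindex_c{\mathcal{BF}}_m^{\mbf,\mbf}$, and the symmetric projection is the $[\sigma]=-1$ component, which the Tate and $\psi$-twists then carry to the $\nu$-eigenspace. If you took $\rho^*=s$, as your wording suggests, you would land in the $(-\nu)$-eigenspace instead.
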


\begin{proof} The proof will be divided into several steps.
\begin{enumerate}[\textup{(}i\textup{)}]
\item \texttt{Reduction from $Q_\ell(X)$ to $P_\ell(X)$.} \newline 
We start with the Beilinson--Flach classes $_c \mathcal{BF}_{\ell m}^{\mbf,\mbf}$ satisfying the norm relations mentioned in Proposition~\ref{originalBF}.
Writing $P_{\ell}(X)$ for the Euler factor of $M(\mbf \otimes \mbf)(1)$ at $\ell$, we know by \cite[Remark~8.1.4]{KLZ17} that
\[
Q_{\ell}(X) = -X^{-1}P_{\ell}(X)\pmod{\ell-1}.
\]
In view of the discussion in \cite[\S7.3]{LLZ14} and of \cite[Lemma~9.6.1]{Rub_ES}, we can modify the classes $_c\mathcal{BF}_m^{\mbf,\mbf} $ by appropriate elements of $\Z_p[\Gal(\Q_{mp^\infty}/\Q)]^\times$ to get  classes
\[
c_m^{\star}\in H^1\Biggl(\Z\biggl[\frac{1}{mpN_\mbf^2},\Bmu_m\biggr], M(\mbf\otimes\mbf)^*\otimes\Lambda(\Gamma)(-\mbj)\!\Biggr)
\]
for all $m\geq 1$ coprime to $pcN_\mbf$, with $c_1^\star =$ $_c\mathcal{BF}_1^{\mbf,\mbf}$.
Furthermore, the compatibility relations
\[
\Norm_{m}^{\ell m}(c_{\ell m}^\star) = \begin{cases}
P_{\ell}(\ell^{-1}\sigma_{\ell}^{-1})\cdot c_m^\star & \text{if $\ell\,|\,pm$},\\[2mm]
c_m^\star & \text{otherwise}
\end{cases}
\]
hold, where $P_{\ell}$ is the Euler factor of $M(\mbf\otimes \mbf)(1)$ at $\ell$; this equality can be checked as in the proof of \cite[Theorem~11.4.1]{KLZ17}.
\item \texttt{Changing the normalization.}\newline
In practice, we can switch between the two normalizations 
$P_{\ell}(\ell^{-1}\sigma_{\ell}^{-1})$ and $P_{\ell}(\sigma_{\ell}^{-1})$ easily by
\cite[Remark 11.4.2]{KLZ17}. Therefore, we obtain classes for which the norm compatibility relation for $\ell \nmid pm$ is given by $P_{\ell}(\sigma_{\ell}^{-1})$.
\item \texttt{Construction of classes in the symmetric square.}\newline
Let $\rho$ be the automorphism of the variety $Y_1(Np)^2$ that interchanges the two factors. Via the K\"unneth isomorphism, there is an isomorphism 
\begin{equation}
\label{Kunneth iso}
H^2_{\text{\'et}}\Bigl(Y_1(Np)^2_{\overline{\Q}},\Lambda\bigl(\sH_{\Zp}\langle t_{Np}\rangle\bigr)^{\boxtimes\,2}(2)\!\Bigr)\simeq H^1_{\text{\'et}}\Bigl( Y_1(Np)_{\overline{\Q}},\Lambda\bigl(\sH_{{\Zp}}\langle t_{Np}\rangle\bigr)(1)\!\Bigr)^{\boxtimes\,2}
\end{equation}
(\cite[p. 16]{LZ19}). Then $\rho^*$ induces $-s$ on $M(\mbf\otimes \mbf)^*$.
But $M(\mbf\otimes \mbf)^*$ is preserved by the symmetry involution $s$ which interchanges the two factors on the right hand side of \eqref{Kunneth iso}.
It follows from the discussion in \S\ref{section: outline Section 1.4} and Proposition~\ref{prop: alternating} that $\rho^*\Bigl(\leftindex_c{\mathcal{BF}}^{\mbf,\mbf}_m\Bigr)=[\sigma]\cdot \leftindex_c{\mathcal{BF}}^{\mbf,\mbf}_m$, where $\sigma\in \Gal(\Q(\Bmu_{mp^\infty})/\Q)$ is the complex conjugation. Thus, there is an equality 
\[
s\Bigl(\leftindex_c{\mathcal{BF}}^{\mbf,\mbf}_m\Bigr)=-[\sigma]\cdot\leftindex_c{\mathcal{BF}}_m^{\mbf,\mbf}.
\]
Let $\chi$ be a continuous character of $\Gal(\Q(\Bmu_{mp^\infty})/\Q)\simeq(\Z/mp^\infty\Z)^\times$. If $\chi(-1)=-1$, then the image of $\leftindex_c{\mathcal{BF}}_m^{\mbf,\mbf}$ in $H^1\Bigl(\Q(\Bmu_m)^+,M(\mbf\otimes \mbf)^* (\chi)\!\Bigr)$ takes values in $\Sym^2 M(\mbf)^*(\chi)$. Now we construct the classes of the desired Euler system. For $m\in\sR$, let $c_m\in H^1_{\Iw}\bigl(\Q(\Bmu_{mp^\infty}),\mbT\bigr)^{\nu}$ be the image of $\leftindex_c{\mathcal{BF}}_{mN_\psi}^{\mbf,\mbf}$ under the natural twisting map
\[
H^1_{\Iw}\Bigl(\Q(\Bmu_{mN_\psi p^\infty}),\Sym^2 M(\mbf)^*\Bigr)^{(-1)}\longrightarrow H^1_{\Iw}\bigl(\Q(\Bmu_{mN_\psi p^\infty}), \mathbf{T}\bigr)^\nu\xrightarrow{\cores} H^1_{\Iw}\bigl(\Q(\Bmu_{mp^\infty}),\mbT\bigr)^{\nu}.
\]
\end{enumerate}
Combining the three steps above, we get the desired Euler system.

Finally, it is straightforward to see that our Euler system lifts to the symmetric square of a Hida family the Euler system that was attached by Loeffler--Zerbes to the symmetric square of a modular form (\cite[Theorem~4.1.6]{LZ19}): since the bottom class of the Euler system built in \cite{LZ19} is non-zero (\emph{cf.} \cite[Theorem~4.2.5]{LZ19}), the same is true for our Euler system as well. \end{proof}

We introduce notation for the $i$-th Greenberg Selmer groups. For $i\in\{0, \ldots, 3\}$, set
\begin{equation} \label{Gr-eq}
H^1_{\Gr,i}\bigl(\Q(\Bmu_{mp^\infty}),\mbT\bigr)\defeq\Bigl\{x\in H^1_{\Iw}\bigl(\Q(\Bmu_{mp^\infty}),\mbT\bigr)\mid\loc_p(x)\in H^1_{\Iw}\bigl(\Q(\Bmu_{mp^\infty})\otimes_\Q\Q_p,\sF^i\mbT\bigr)\Bigr\}.
\end{equation}
An analogous definition can be given with other fields in place of $\Q_{\cyc}$.

\begin{prop} \label{LZ 4.2.1}
The class $c_m$ belongs to $H^1_{\Gr,1}\bigl(\Q(\Bmu_{mp^\infty}),\mbT\bigr)^\nu$ for all $m\in\sR$.
\end{prop}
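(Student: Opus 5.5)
The plan is to generalize the proof of \cite[Proposition~4.2.1]{LZ19}: first show that the localization at $p$ of the Beilinson--Flach class for $M(\mbf\otimes\mbf)^*$ has trivial image in the $\sF^{--}$ quotient, and then transport this vanishing through the projection and twisting used in Theorem~\ref{LZ 4.1.6}. By the definition \eqref{Gr-eq}, we must show that $\loc_p(c_m)$ maps to zero in $H^1_{\Iw}\bigl(\Q(\Bmu_{mp^\infty})\otimes_\Q\Q_p,\Gr^0\mbT\bigr)$. The long exact sequence attached to $0\to\sF^1\mbT\to\mbT\to\Gr^0\mbT\to0$ then shows that $\loc_p(c_m)$ comes from $H^1_{\Iw}(\ldots,\sF^1\mbT)$. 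The map $H^1_{\Iw}(\sF^1)\to H^1_{\Iw}(\mbT)$ need not be injective, but this is harmless: \eqref{Gr-eq} only requires that $\loc_p(x)$ lie in the image.

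\textbf{Step 1.} Show that $\loc_p\bigl(\leftindex_c{\mathcal{BF}}_{m}^{\mbf,\mbf}\bigr)$ projects to zero in $H^1_{\Iw}\bigl(\Q(\Bmu_{m p^\infty})\otimes\Q_p,\sF^{--}M(\mbf\otimes\mbf)^*\otimes\Lambda(\Gamma)(-\mbj)\bigr)$. The natural input is the local result of Kings--Loeffler--Zerbes that the Beilinson--Flach classes for two Hida families lie in the image of $H^1(\sF^{-+}+\sF^{+-}+\sF^{++})$. This is the family analogue of the statement that the $\sF^{--}$-projection vanishes: the Perrin-Riou regulator on $\sF^{--}$ is injective and the relevant explicit reciprocity law forces the image to be zero, as in \cite[Proposition~8.1.7]{KLZ17}. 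If this is not stated directly, I would deduce it from the vanishing at a Zariski-dense set of arithmetic specializations $(f,f)$ together with the classical statement \cite[Proposition~4.2.1]{LZ19}. For that I would use that $H^1_{\Iw}$ of the rank-one module $\sF^{--}$ over $\Q_p$ is torsion-free over the relevant Iwasawa algebra, since $H^0$ vanishes by $p$-distinguishedness. Hence a class vanishing at a dense set of specializations vanishes.

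\textbf{Step 2.} Apply the twisting and the symmetric projection. Under the identification (discussed before Theorem~\ref{LZ 4.1.6}) of $\Gr^0(\mbT)$ with the $\sF^{--}M(\mbf\otimes\mbf)^*(1)$-quotient twisted by $\psi$, the map from $M(\mbf\otimes\mbf)^*$ to $\Sym^2$ on the $(-1)$-eigenspace is compatible with the filtrations. So the composite
\[
H^1_{\Iw}\bigl(\Sym^2M(\mbf)^*\bigr)^{(-1)}\longrightarrow H^1_{\Iw}(\mbT)^\nu\longrightarrow H^1_{\Iw}(\Gr^0\mbT)
\]
factors through the $\sF^{--}$-projection of the original class, and the twist by $\psi$ commutes with $\loc_p$ because $p\nmid N_\psi$. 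Corestriction from $\Q(\Bmu_{mN_\psi p^\infty})$ to $\Q(\Bmu_{mp^\infty})$ commutes with localization and with the maps to graded pieces, so vanishing is preserved. The modifications by units of $\Zp[\Gal]$ in step (i) of Theorem~\ref{LZ 4.1.6} are Galois-equivariant scalars and also preserve the vanishing.

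The main obstacle is Step 1 in the family setting. One must check that the identification of $\Gr^0\mbT$ with $\sF^{--}$ holds integrally, rather than only after inverting $p$, over the possibly non-regular ring $\Lambda_\mbf\hat\otimes\Lambda_\mbf$. One must also secure the torsion-freeness used in the density argument. I would first try to cite the KLZ result for the unrestricted Rankin--Selberg classes directly and only fall back on specialization if necessary.
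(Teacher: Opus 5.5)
Your proposal is correct and follows essentially the same route as the paper. The paper also invokes \cite[Proposition~8.1.7]{KLZ17} for the vanishing of the $\sF^{--}$-component of $\loc_p\bigl(\leftindex_c{\mathcal{BF}}_m^{\mbf,\mbf}\bigr)$, then uses that $\Gr^0(\mbT)$ maps isomorphically onto $\sF^{--}M(\mbf\otimes\mbf)^*(1)(\psi)$. Your Step~2 just makes explicit the compatibility with the twist, the corestriction and the unit modifications, which the paper leaves implicit. Your density fallback is not needed, and if you use it, run it after restricting to the diagonal, since points of the form $(f,f)$ are not Zariski-dense over $\Lambda_\mbf\hat\otimes\Lambda_\mbf$.
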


\begin{proof} As explained in the proof of \cite[Proposition~8.1.7]{KLZ17}, the image of the class $\leftindex_c{\mathcal{BF}}_m^{\mbf,\mbf}$ in $H^1\bigl(\Q_p, \sF^{--} M(\mbf\otimes \mbf)^*\otimes\Lambda(\Gamma)(-\mbj)\bigr)$ is trivial and $\Gr^0(\mbT) = \mbT/\sF^{1}(\mbT)$ projects isomorphically onto the quotient $\sF^{--}M(\mbf\otimes\mbf)^*(1)(\psi)$, whence the claim. \end{proof}

\subsection{Eliminating the factor \texorpdfstring{$c$}{}} \label{c-subsec}

Now we need to eliminate the auxiliary factor $c$. Write $\sP'$ for the set of prime numbers $\ell$ satisfying the following properties:
\begin{itemize}
\item $\ell \nmid pcN_\mbf N_{\psi}$;
\item $\ell\equiv 1 \pmod{p}$;
\item $\mbT/(\sigma_{\ell}-1)\mbT$ is a cyclic $\Zp$-module;
\item $\sigma_{\ell}-1$ is bijective on $\mbT'$, where $\mbT' \defeq \Exterior^2 M(\mbf)^*(1+\psi)$ is a lattice inside the representation $\mathbf{V}^{'}\defeq\Exterior^2 M(\mbf)^*[1/p](1+\psi)$.
\end{itemize}
Let $\sR'$ be the set of square-free products of primes in $\sP'$. Moreover, for all $m\in\sR'$ let $\Q(m)$ be the unique abelian $p$-extension of $\Q$ inside $\Q(\Bmu_m)$; the Selmer groups $H^1_{\Gr,i}\bigl(\Q(m)(\Bmu_{p^\infty}),\mbT\bigr)$ are defined as in \eqref{Gr-eq}.

\begin{Th} \label{ES we obtain}
There exists a collection of classes $\mathbf{c}'={(c'_m)}_{m\in\sR'}$ with
\[
c'_m\in H^1_{\Iw}\bigl(\Q(m)(\Bmu_{p^\infty}),\mbT\bigr)^\nu
\]
satisfying
\begin{enumerate}[\textup{(}i\textup{)}]
\item $c'_1=\leftindex_c{\mathcal{BF}}_{\psi}^{\mbf}\neq 0$;
\item $c'_m\in H^1_{\Gr,1}\bigl(\Q(m)(\Bmu_{p^\infty}),\mbT\bigr)^\nu$ for all $m\in\sR'$;
\item if $m\in \sR'$ and $\ell$ is a prime number such that $\ell m\in\sR'$, then
\[
\cores_m^{\ell m}(c'_{\ell m}) = P_{\ell}\bigl(\Sym^2 M(\mbf) \otimes \psi, \sigma_{\ell}^{-1}\bigr)\cdot c'_m.
\]
\end{enumerate}
\end{Th}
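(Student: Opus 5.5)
We derive Theorem~\ref{ES we obtain} from Theorem~\ref{LZ 4.1.6} and Proposition~\ref{LZ 4.2.1}, following the corresponding step of \cite[\S4.1--4.2]{LZ19} (itself modelled on \cite[\S9.6]{Rub_ES}). For $m\in\sR'$, every prime divisor $\ell$ of $m$ satisfies $\ell\nmid pcN_\mbf N_\psi$. Primes $\ell\equiv1\pmod p$ are odd, and $\ell\neq3$ because $p\geq7$, so $\ell\nmid 6$. Hence $\sR'\subset\sR$. We then define
\[
c''_m\defeq\cores^{\Q(\Bmu_{mp^\infty})}_{\Q(m)(\Bmu_{p^\infty})}(c_m)\in H^1_{\Iw}\bigl(\Q(m)(\Bmu_{p^\infty}),\mbT\bigr)^\nu .
\]
Corestriction is $\Gal$-equivariant and compatible with the filtration at $p$, so Proposition~\ref{LZ 4.2.1} gives $c''_m\in H^1_{\Gr,1}$. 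This settles (ii) once the final classes are shown to be $\Lambda$-linear combinations of the $c''_m$.

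\textbf{Norm relation.} Theorem~\ref{LZ 4.1.6} gives
\[
\cores_m^{\ell m}(c''_{\ell m})=\bigl(1-\ell^{\mbk-1}\psi\varepsilon_\mbf(\ell)\sigma_\ell^{-1}\bigr)P_\ell(\Sym^2\mbf\otimes\psi,\sigma_\ell^{-1})\,c''_m .
\]
The extra linear factor is the Euler factor of $\mbT'=\Exterior^2M(\mbf)^*(1+\psi)$. This is exactly why $\sP'$ demands that $\sigma_\ell-1$ act bijectively on $\mbT'$. The factor is $\det(1-\mathrm{Frob}_\ell^{-1}\mid \mbT')$ evaluated at $\sigma_\ell^{-1}$. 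Under the congruence $\sigma_\ell\equiv1$ modulo the augmentation of the pro-$p$ part (using $\ell\equiv1\pmod p$), it reduces to $\det(1-\mathrm{Frob}_\ell^{-1}\mid\mbT')$, which is a unit.

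The factor is therefore a unit in the local ring $\Lambda_\mbf[\![\Gal(\Q(m)(\Bmu_{p^\infty})/\Q)]\!]$. Because $\Q(m)$ is a $p$-extension, the relevant group rings are local, and a unit mod the maximal ideal is a unit. We remove the factor by the standard device: put
\[
c'_m\defeq\Bigl(\prod_{\ell\mid m}u_\ell\Bigr)^{-1}c''_m,
\]
where $u_\ell$ is a unit lift of $1-\ell^{\mbk-1}\psi\varepsilon_\mbf(\ell)\sigma_\ell^{-1}$ to the Iwasawa algebra of $\Q(m)(\Bmu_{p^\infty})$. The lifts must be compatible under restriction as $m$ varies; this is where the choice of Frobenius lifts matters. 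Rubin's Lemma~9.6.1-style correction handles it, since compatible choices of the $\sigma_\ell$ exist in $\Gal(\Q(\Bmu_{m p^\infty})/\Q)$. Then (iii) follows.

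\textbf{Eliminating $c$.} The dependence on $c$ enters through the factor
\[
\bigl(c^2-c^{-(\mbk+\mbk')}\varepsilon_\mbf(c)^{-2}\sigma_c^2\bigr)\cdots
\]
from \cite[Theorem~5.4.2]{KLZ17} or \cite[Remark~8.1.4]{KLZ17}. Since the classes are $\Lambda$-linear in this factor and the factor is independent of $m$, we can either absorb it into the definition of $\leftindex_c{\mathcal{BF}}_\psi^\mbf$ in (i), or divide it out after localizing on the $\nu$-isotypic part where it is nonzero. I would follow \cite[\S4.2]{LZ19} and simply keep the $c$-dependence inside the name $\leftindex_c{\mathcal{BF}}_\psi^\mbf\defeq c''_1=c_1$. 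Then (i) is immediate from $c_1\ne0$ in Theorem~\ref{LZ 4.1.6}.

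\textbf{Main obstacle.} The delicate step is the unit argument in the norm relation. One must verify that the Euler factor of $\mbT'$ at $\ell$ really equals $1-\ell^{\mbk-1}\psi\varepsilon_\mbf(\ell)X$ with the normalizations of Theorem~\ref{LZ 4.1.6}, i.e.\ that $\det M(\mbf)^*$ matches this character. One must then confirm that the bijectivity hypothesis makes it invertible in the full Iwasawa algebra over $\Lambda_\mbf$, not just after specialization. I would check the latter via Nakayama, reducing modulo the maximal ideal of $\Lambda_\mbf$ together with the augmentation ideal.
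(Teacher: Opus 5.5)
Your proposal is correct and follows essentially the same route as the paper, which simply defers to the proof of \cite[Theorem~5.3.3]{LZ19}: note $\sR'\subset\sR$, corestrict $c_m$ down to $\Q(m)(\Bmu_{p^\infty})$, and divide out the linear Euler factor of $\mbT'$ via compatible Frobenius lifts; that factor is a unit because $\sigma_\ell$ lies in the pro-$p$ part of the Galois group ($\ell\equiv 1 \pmod p$, $\Q(m)$ a $p$-extension) and $\sigma_\ell-1$ is bijective on $\mbT'$. The one imprecision is that $\Lambda_\mbf[\![\Gal(\Q(m)(\Bmu_{p^\infty})/\Q)]\!]$ is semi-local rather than local, because of the order-$(p-1)$ quotient $\Gal(\Q(\Bmu_p)/\Q)$, but your unit argument goes through on each local factor.
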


\begin{proof} Using Theorem~\ref{LZ 4.1.6}, proceed as in the proof of \cite[Theorem~5.3.3]{LZ19}. \end{proof}

\section{From Euler systems to Selmer bounds} \label{sec:Koly}

In this section, we apply the Kolyvagin system machinery to our Euler system to derive a divisibility of characteristic ideals for the relevant Greenberg Selmer groups.

\subsection{A big image result} 

First of all, we prove (under certain hypotheses) a big image result that is the analogue of \cite[Theorem~5.6]{BO19} and \cite[Proposition~5.2.1]{LZ19} in our setting.

Let $f$ be a non-CM newform of weight $k$ and level $N$, whose $q$-expansion will be denoted by $f(q)=\sum_{n\geq1}a_n(f)q^n$. Let $p$ be a good ordinary prime for $f$, \emph{i.e.}, $p\nmid N$ and $a_p(f)$ is a $p$-adic unit. Let $\mbf$ be the unique Hida family admitting the $p$-stabilization of $f$ as a weight $k$ arithmetic specialization. Suppose further that $f$ is $p$-distinguished. Let $\Lambda_\mbf$ be a power series ring (in one variable) and let $\overline{\rho}: G_\Q \rightarrow\GL_2(k)$ be the residual representation attached to $\mbf$.
Let 
\[
\bPsi:(\Z/pN\Z)^\times\longrightarrow\cO^\times
\]
be the central character defined by sending $\bPsi(\ell)$ to the eigenvalue of the diamond operator $\langle \ell \rangle$ acting on $\mbf$ (see, \emph{e.g.}, \cite[p.~40]{BO19}).
Here $\mathcal{O}$ is a suitable finite extension of $\Q_p$ containing the image of $\bPsi$. We can also enlarge $\cO$ so that it contains the image of the Dirichlet character $\psi$. As mentioned in \cite[equation~(5.1)]{BO19}, we have 
\[
\overline{\rho}|_{G_{\Q_p}} \simeq \begin{pmatrix}
\overline{\bPsi}\alpha^{-1} & *\\
0 & \alpha\\
\end{pmatrix},
\]
where $\alpha \in k$ is the reduction of the eigenvalue of the $U_p$-action on $\mbf$ modulo the maximal ideal $\fm$ of $\Lambda_\mbf$ (by an abuse of notation, we have denoted by $\alpha$ also the unramified character that assumes this value at the arithmetic Frobenius at $p$).

In the result that follows, notation from \S\ref{c-subsec} is in force.

\begin{Th}[Big image] \label{thm: big image}
Assume $p\geq 7$ and that
\begin{enumerate}[\textup{(}\text{BI}.1\textup{)}]
\item the central character $\bPsi$ is non-trivial; 
\item the residual representation $\overline{\rho}$ attached to $\mbf$ contains a conjugate of $\SL_2(\F_p)$; 
\item there exists $u\in (\Z/pN\Z )^\times$ with $\overline{\bPsi}(u)\neq 1$ such that $\psi(u)$ is a square in $\mathcal{O}^\times.$
\end{enumerate}
Then there exists $\tau\in G_{\Q}$ such that
\begin{itemize}
\item $\tau$ acts trivially on $\Bmu_{p^\infty}$;
\item $\mbT/(\tau-1)\mbT$ is a free $\Lambda_{\mbf}$-module of rank $1$;
\item $\tau-1$ acts invertibly on $\mbT'$.
\end{itemize}
\end{Th}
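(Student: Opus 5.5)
We follow the strategy of \cite[Theorem~5.6]{BO19} and \cite[Proposition~5.2.1]{LZ19}: we produce $\tau$ as a product $\tau=\tau_1\tau_2$ of two elements of $G_{\Q(\Bmu_{p^\infty})}$, one giving the rank-one coinvariants and one giving the invertibility on $\mbT'$.

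\emph{Reduction to the residual level.} The module $\mbT=\Sym^2M(\mbf)^*(1)(\psi)$ is free of rank $3$ over $\Lambda_\mbf$. Suppose $\tau$ acts trivially on $\Bmu_{p^\infty}$, so that the twist by $(1)$ does not affect it. If the residual module $\overline{\mbT}/(\tau-1)\overline{\mbT}$ is one-dimensional over $k$, then Nakayama's lemma shows that $\mbT/(\tau-1)\mbT$ is cyclic. We upgrade "cyclic" to "free of rank $1$" by checking that the characteristic polynomial of $\tau$ on $\mbT$ has $1$ as a root of multiplicity exactly one. Concretely, we arrange that $\tau$ acts on $\mbT$ with eigenvalues $1,\lambda,\lambda^{-1}\cdot(\text{unit})$, where the other two eigenvalues are $\not\equiv1$ modulo $\fm$. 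Then $\mbT$ splits over $\Lambda_\mbf$ (a local ring) into the generalized eigenspace for $1$, which has rank $1$, and a complement on which $\tau-1$ is invertible. This makes $\mbT/(\tau-1)\mbT$ free of rank $1$.

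Similarly, $\mbT'=\Exterior^2M(\mbf)^*(1+\psi)$ has rank one and $\tau$ acts on it through $\det\rho_\mbf(\tau)\cdot\psi(\tau)$ times the cyclotomic factor. So $\tau-1$ is invertible on $\mbT'$ as soon as $\overline{\det\rho}(\tau)\psi(\tau)\neq1$ in $k$.

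\emph{Choice of $\tau$.} By (BI.2), $\overline{\rho}(G_\Q)$ contains a conjugate of $\SL_2(\F_p)$. Since $p\geq7$, $\SL_2(\F_p)$ is perfect, so $\overline{\rho}(G_{\Q(\Bmu_{p^\infty})})$ still contains $\SL_2(\F_p)$. Using the big-image results of Hida/Fischman for non-CM families (as used in \cite{BO19}), the image of $\rho_\mbf$ restricted to $G_{\Q(\Bmu_{p^\infty})}$ contains elements reducing to any given element of $\SL_2(\F_p)$.

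Next use (BI.1) and (BI.3). Choose $\sigma_u\in G_{\Q(\Bmu_{p^\infty})}$ mapping to $u$ under the prime-to-$p$ diamond character, that is, acting on $\Bmu_N$ via $u$. Then $\det\rho_\mbf(\sigma_u)$ reduces to $\overline{\bPsi}(u)\neq1$. Write $\psi(u)=w^2$ with $w\in\mathcal{O}^\times$. We adjust $\sigma_u$ by an element of the $\SL_2$-part so that $\rho_\mbf(\tau)$ is, modulo $\fm$, a diagonalizable matrix with eigenvalues $a,b$ satisfying $ab=\overline{\bPsi}(u)$. The twisted symmetric square $\Sym^2\otimes\psi$ then has eigenvalues $a^2w^2$, $abw^2$ and $b^2w^2$.

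Choose $a=w^{-1}$, so that $a^2w^2=1$ and the $1$-eigenvalue is forced. The other two eigenvalues are then $\overline{\bPsi}(u)$ and $\overline{\bPsi}(u)^2w^4$. On $\mbT'$ the eigenvalue is $ab\,\psi(u)$ up to sign conventions. Because we have freedom to scale the $\SL_2(\F_p)$-component, there is a one-parameter family of choices, and we use it to keep the remaining two eigenvalues away from $1$ and also make $ab\,\psi(u)\neq1$. This is where $p\geq7$ matters: $\F_p^\times$ has enough elements to avoid the finitely many bad values. Finally, we lift from the residual element to an honest $\tau$ via the $\Lambda$-adic big image.

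\emph{Main obstacle.} The hardest step is realizing the residual conditions simultaneously, inside $G_{\Q(\Bmu_{p^\infty})}$, by a single element. Two things must be controlled together: the determinant, which is forced to be a value of $\overline{\bPsi}$ and so constrained by the tame part, and the eigenvalue ratio, which comes from $\SL_2$. I would try first to copy the explicit matrix computation in the proof of \cite[Theorem~5.6]{BO19}, replacing their twist by $\psi$ and using (BI.3) to extract the square root that makes the $1$-eigenvalue appear.
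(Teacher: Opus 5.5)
\textbf{There is a genuine gap.} The step that fails is your claim that, after you set $a=w^{-1}$, scaling the $\SL_2(\F_p)$-component still leaves a one-parameter family of choices.

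For $\tau\in G_{\Q(\Bmu_{p^\infty})}$, both $\det\rho_\mbf(\tau)$ and $\psi(\tau)$ depend only on the tame class $u$ of $\tau$, since every other factor of $\det\rho_\mbf$ goes through the cyclotomic character. So $ab=\bPsi(u)$ and $w^2=\psi(u)$ are fixed, and imposing $a=w^{-1}$ forces $b=\bPsi(u)w$. You also miscomputed the middle eigenvalue: it is $abw^2=\bPsi(u)\psi(u)$, not $\overline{\bPsi}(u)$.

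Set $\lambda_0\defeq\bPsi(u)\psi(u)$. Then $\tau$ has eigenvalues $1,\lambda_0,\lambda_0^2$ on $\mbT$, and its eigenvalue on $\mbT'$ is again $\lambda_0$. The other ways of creating the eigenvalue $1$ do not help: $a=-w^{-1}$ gives the same triple, and $ab\psi(u)=1$ destroys invertibility on $\mbT'$. Conceptually, $\mbT'$ has rank one, so $G_{\Q(\Bmu_{p^\infty})}$ acts on it through a finite-order character depending only on the tame class. No element of $\SL_2$, residual or $\Lambda$-adic, can move that eigenvalue.

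Everything therefore reduces to $\overline{\lambda_0}\neq\pm1$. The only freedom is the choice of $u$, and your appeal to $p\geq7$ has nothing to act on. This condition does not follow from (BI.1)--(BI.3). With the normalisation $\det\rho_\mbf(\tau)=\bPsi(u)$, take $\bPsi$ non-trivial of odd order prime to $p$ and $\psi=\bPsi^{-1}$. Then every $\psi(u)$ is a square in $\cO^\times$ and $\overline{\bPsi}(u)\neq1$ for suitable $u$. Yet $\lambda_0=1$ identically, so $\tau-1$ vanishes on $\mbT'$ for every $\tau$ fixing $\Bmu_{p^\infty}$.

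\textbf{Comparison with the paper.} The paper gets its free parameter from a unit $x\in\Z_p\llbracket X\rrbracket^\times$. Following B\"uy\"ukboduk--Ochiai, it places $D=\Diag\bigl(x^{-1}\psi^{-1/2}(u),\,x\psi^{1/2}(u)\bPsi(u)\bigr)$ in $\rho(G_{\Q(\Bmu_{p^\infty})})$. After a normalisation by $x^2\psi$, it obtains eigenvalues $1,\lambda,\lambda^2$ on $\mbT$ and $\lambda$ on $\mbT'$, with $\lambda=x^2\psi(u)\bPsi(u)$. It then chooses $x$ with $\lambda\not\equiv\pm1\pmod{(p,X)}$.

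Your own computation shows why you should not simply import this step. On $G_{\Q(\Bmu_{p^\infty})}$ the twist $(1)(\psi)$ contributes only $\psi(u)$. Without the extra factor $x^2$, the first eigenvalue is $x^{-2}$ rather than $1$. By the finite-order remark above, $x^2$ cannot genuinely occur in the action on $\mbT'$.

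\textbf{Repair.} The honest fix is an extra hypothesis, for example that some $u$ has $\psi(u)\in\cO^{\times2}$ and $\overline{\bPsi(u)\psi(u)}\neq\pm1$. Granting it, your splitting argument works, provided you realise $\Diag\bigl(w^{-1},\bPsi(u)w\bigr)$ exactly in $\rho_\mbf(G_{\Q(\Bmu_{p^\infty})})$, for instance from an image containing $\SL_2$ of the coefficient ring. A lift that is only correct modulo $\fm$ yields an eigenvalue congruent but not equal to $1$. In that case $\mbT/(\tau-1)\mbT$ is torsion rather than free.
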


\begin{proof} Let $x$ be any element of $\Z_p\llbracket X\rrbracket^\times$ (viewed as an element of $\Z_p \llbracket \Gamma\rrbracket$ via the isomorphism $\Gamma \cong 1+\Z_p$), choose $u\in(\Z/pN\Z )^\times$ with $\overline{\bPsi}(u)\neq 1$ and let $\psi$ be a non-trivial Dirichlet character. The same argument as in the proof of \cite[equation (5.12), p.~45]{BO19} implies that the matrix
\[
D\defeq\begin{pmatrix}x^{-1}\psi^{-1/2}(u)&0\\0&x\psi^{1/2}(u)\bPsi(u)\end{pmatrix}
\]
lies in $\rho(G_{\Q(\Bmu_{p^\infty})})$. Let $\tau' \in \Q(\Bmu_{p^\infty})$ act via $D$. Upon multiplication by $x^2\psi$, the matrix $D\otimes D$ is the diagonal matrix $\Diag\bigl(1, x^2\psi(u)\bPsi(u),x^2\psi(u)\bPsi(u), x^4\psi^2(u)\bPsi^2(u)\bigr)$.
Therefore, $\tau'$ acts on the symmetric square piece $\mbT$ via the matrix $\Diag\bigl(1,x^2\psi(u)\bPsi(u), x^4\psi^2(u)\bPsi^2(u)\bigr)$, while the action of $\tau'$ on the anti-symmetric square piece $\mbT'$ is via $x^2\psi(u)\bPsi(u)$.
Choosing $x$ in a way that $x^2\psi(u)\bPsi(u) \not\equiv \pm 1\pmod{(p,X)}$, we obtain an element $\tau\in G_{\Q}$ with the desired properties. \end{proof}

\subsection{Assumptions} \label{ass-subsec}

Set $\mbA \defeq \mbT^\vee(1)$, where $(-)^\vee$ denotes Pontryagin dual. We equip $\mbA$ with a 3-step filtration, which we obtain by defining $\sF^{i}\mbA$ to be the orthogonal complement of $\sF^{3-i}\mbT$.

For simplicity, set $\Q_\infty\defeq\Q(\Bmu_{p^\infty})$; recall that $\Gamma\defeq\Gal(\Q_\infty/\Q)$ and set $\Gamma_1\defeq\Gal(\Q_{\cyc}/\Q)$, so that $\Gamma_1$ is a quotient of $\Gamma$. Actually, the canonical splitting $\Gamma=\Gamma_{\tor}\times\Gamma_1$ allows us to identify $\Gamma_1$ with a subgroup of $\Gamma$ as well. Consider the canonical character $\mbj:\Gamma\rightarrow\Lambda(\Gamma)^\times$ defined in \S\ref{section: outline Section 1.4}; by a slight abuse of notation, we use the same symbol for its restriction 
\[
\mbj:\Gamma_1\longrightarrow\Lambda(\Gamma_1)^\times
\]
to $\Gamma_1$. Fix a character $\eta$ of $\Gamma_{\tor}$ and set
\[
\T\defeq\mbT(\eta^{-1})\otimes\Lambda(\Gamma_1)(-\mbj).
\]
Note that $\T$ is equipped with the \emph{Greenberg Selmer structure}, denoted by $\sF_{\Gr,1}$, for which the local condition at $p$ is given by the cohomology of $\sF^{1}\T$; see \S\ref{S: results 1} for details.

Henceforth, in addition to the hypotheses in the statement of Theorem~\ref{thm: big image}, we assume the following:
\begin{itemize}
\item[(H.0)]  $H^0(\Q_p, \overline{\mbT})=0$;
\item[(H.2)]  $H^2(\Q_p, \overline{\mbT})=0$;
\item[(H.0a)]  $H^0(\Q_p, \overline{\mbT}/\Fil^{1}\overline{\mbT})=0$;
\item[(H.2a)]  $H^2(\Q_p, \overline{\mbT}/\Fil^{1}\overline{\mbT})=0$;
\item[(H.2b)]  $H^2(\Q_p, \overline{\mbT}/\Fil^{2}\overline{\mbT})=0$;
\item[(cond)] the conductor of $\bPsi$ is prime to $p$;
\item[(dist)] $\mbf$ is $p$-distinguished (\emph{i.e.}, the restriction to $G_{\Q_p}$ of the semisimplification of the residual representation of $\mbf$ is a direct sum of distinct characters);
\item[(Tam)] the $p$-part of the Tamagawa number (in the sense of \cite[\S7.6.10]{Nek-selmer}) at all primes different from $p$ for some classical specialization of $\mbf$ is $1$;
\item[(BG)] \cite[Assumption 2.7]{BG} holds.
\end{itemize}
In particular, conditions (Tam) and (BG) allow us to freely use results from \cite{BG}.

\subsection{A bound on Selmer groups}

From here on, write $e_\eta \Lambda_{\mbf}{\llbracket\Gamma\rrbracket}$ for $e_\eta\bigl(\Lambda_{\mbf}\,\hat\otimes\,\Lambda(\Gamma)\bigr)$.

\begin{Th} \label{to complete}
Let $\eta$ be a character of $\Gamma_{\tor}$ such that $\eta(-1)=\psi(-1)$. Then
\begin{enumerate}
\item The \emph{Greenberg Selmer group}
\[
e_{\eta}H^1_{\Gr,1}(\Q_{\infty},\mbT)\defeq e_\eta\biggl(\ker\Bigl(H^1_{\Iw}(\Q_{\infty}, \mbT)\longrightarrow H^1_{\Iw}\bigl(\Q_{p,\infty},\mbT/\sF^1\mbT\bigr)\!\Bigr)\!\biggr)
\]
is free of rank $1$ over $e_\eta\Lambda_{\mbf}{\llbracket\Gamma\rrbracket}$ and $e_\eta \cdot\leftindex_c{\mathcal{BF}}_{\psi}^{\mbf}$ is a non-torsion element of this module.
\item The \emph{Greenberg Selmer group}
\[
e_{\eta} H^1_{\Gr,2}(\Q_{\infty},\mbA)^\vee
\]
is ${e_\eta}\Lambda_{\mbf}\llbracket\Gamma\rrbracket$-torsion.
\item There is a divisibility of characteristic ideals
\[
\charac_{\Lambda_{\mbf}\llbracket\Gamma_1\rrbracket}\Bigl(e_{\eta}H^1_{\Gr,2}(\Q_{\infty},\mbA)^\vee\Bigr)\;\Big|\;\charac_{\Lambda_{\mbf}\llbracket\Gamma_1\rrbracket}\Biggl(\frac{e_{\eta}H^1_{\Gr,1}(\Q_{\infty},\mbT)}{\Lambda_{\mbf}\llbracket\Gamma_1\rrbracket\cdot\leftindex_c{\mathcal{BF}}_{\psi}^{\mbf}}\Biggr).
\]
\end{enumerate}
\end{Th}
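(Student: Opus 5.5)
The plan is to run the Mazur--Rubin Kolyvagin system machinery over the multi-variable Iwasawa algebra $e_\eta\Lambda_\mbf\llbracket\Gamma\rrbracket$, following the proofs of \cite[Theorem~6.2.4]{LZ19} and \cite[\S5--6]{BO19}. We apply it to the Euler system $\mathbf{c}'$ of Theorem~\ref{ES we obtain}, projected by $e_\eta$, for the representation $\T=\mbT(\eta^{-1})\otimes\Lambda(\Gamma_1)(-\mbj)$ equipped with the Greenberg Selmer structure $\sF_{\Gr,1}$. The parity condition $\eta(-1)=\psi(-1)=\nu$ is what makes $e_\eta$ non-zero on the $\nu$-part where the classes $c'_m$ live. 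The first step is to convert $\mathbf{c}'$ into a $\Lambda$-adic Kolyvagin system for $(\T,\sF_{\Gr,1},\sP')$ via the Euler-to-Kolyvagin map of \cite{MR_KS}. Its hypotheses are verified as follows:
\begin{itemize}
\item residual irreducibility and the existence of $\tau$ come from Theorem~\ref{thm: big image}, and the cyclicity of $\mbT/(\sigma_\ell-1)\mbT$ is built into the definition of $\sP'$;
\item the removal of the $\Exterior^2$-factor of $P_\ell$ is handled by invertibility of $\tau-1$ on $\mbT'$;
\item the Kolyvagin derivatives satisfy the Greenberg condition at $p$ thanks to Proposition~\ref{LZ 4.2.1} and Theorem~\ref{ES we obtain}(ii), together with (H.0a) and (H.2a), which make $H^1(\Q_p,\sF^1\T)\to H^1(\Q_p,\T)$ injective with torsion-free cokernel, so that the local condition is cartesian.
\end{itemize}

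Next we compute the core rank of $(\T,\sF_{\Gr,1})$ and show it is $1$. By Greenberg's Euler characteristic formula, using (H.0) and (H.2), the core rank is $\operatorname{rank}\sF^1\T-\operatorname{rank}\T^{c=+1}$, and the parity of $\eta\psi$ is exactly what makes this equal to $1$. The self-duality of $\sF_{\Gr,1}$ and $\sF_{\Gr,2}$ under $\mbA=\mbT^\vee(1)$ (orthogonality of the filtrations) identifies the dual Selmer structure with $\sF_{\Gr,2}$ on $\mbA$. With core rank $1$, the $\Lambda$-adic Kolyvagin theory, in the form of \cite[Theorem~5.3.10]{MR_KS} extended to regular local rings of higher dimension (following \cite{BO19}, working after specializing along height-one primes or using Ochiai's techniques), gives two things: $e_\eta H^1_{\Gr,1}$ is torsion-free of rank one, and whenever the Kolyvagin system has non-zero bottom class, the dual Selmer group is torsion with characteristic ideal dividing that of $e_\eta H^1_{\Gr,1}/(\kappa_1)$. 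Since $\kappa_1=e_\eta c'_1$ and $c'_1\neq0$, this yields (2) and (3), once we know $e_\eta c'_1$ is non-torsion.

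For freeness in (1), we would argue that $H^0(\Q_\infty,\overline{\mbT})=0$ (from the big image result) makes $H^1_{\Iw}$ torsion-free and reflexive. The local quotient $H^1_{\Iw}(\Q_{p,\infty},\mbT/\sF^1)$ is free by (H.0a) and (H.2a). The rank-one Selmer module is then reflexive, hence free over the two-dimensional regular ring $e_\eta\Lambda_\mbf\llbracket\Gamma_1\rrbracket$ (assuming $\Lambda_\mbf$ is a power series ring in one variable). For non-torsionness, we specialize to an arithmetic prime $f$ of $\mbf$: the image of $e_\eta c'_1$ is the Loeffler--Zerbes class, which is non-torsion by \cite[Theorem~4.2.5]{LZ19}, so $e_\eta c'_1$ cannot be torsion.

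The main obstacle is the passage from Mazur--Rubin's one-dimensional (or $\Zp\llbracket T\rrbracket$) theory to the multi-variable ring $\Lambda_\mbf\llbracket\Gamma_1\rrbracket$. The characteristic ideal divisibility must hold at every height-one prime, including those not of the form $(\varpi)$ or generated by a cyclotomic variable. I would first try a specialization argument: prove the divisibility after reducing along a dense family of height-one primes $\mathfrak{P}_k$ (arithmetic specializations of $\mbf$), where one-variable Kolyvagin theory applies, and control the Selmer groups under specialization using (H.0)–(H.2b). The divisibility then lifts by a standard lemma on characteristic ideals of modules over two-dimensional regular local rings, as in Ochiai's work and \cite{BO19}.
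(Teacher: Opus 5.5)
Your proposal follows essentially the same route as the paper. The paper also turns the Euler system of Theorem~\ref{ES we obtain} into a generalized Kolyvagin system for $(\T,\sF_{\Gr,1},\sP')$ via \cite[Theorem~2.26]{BO19} and \cite[Appendix~A]{LZ19}, checks the Greenberg condition at $p$ as in \cite[Theorem~5.3.4]{LZ19}, and concludes by the formal multi-variable argument of \cite[Theorem~3.6]{BO19}; your core-rank, freeness and non-torsion checks, and your specialization fallback, spell out what the paper delegates to those references. One small correction: $e_\eta\Lambda_\mbf\llbracket\Gamma_1\rrbracket\simeq\cO\llbracket X,T\rrbracket$ has Krull dimension $3$, not $2$, so freeness should come from the fact that a rank-one reflexive module over a regular local ring (a UFD) is free, not from ``reflexive over a two-dimensional regular ring''.
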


\begin{proof} The proof of \cite[Theorem~2.26]{BO19} is entirely formal and is a generalization of that of \cite[Proposition~12.2.3]{KLZ17}, which works for ordinary $\Zp$-lattices: it takes the Euler systems of ordinary lattices over a multi-variable power series ring and produces a generalized Kolyvagin system; the resulting Kolyvagin classes satisfy Greenberg's local condition at $p$. 

Recall that in Theorem~\ref{ES we obtain} we proved the existence of the Euler system of $\Sym^2 M(\mbf)^*$. The arguments in the proof of \cite[Theorem~2.26]{BO19} combined with \cite[Appendix~A]{LZ19} yield a generalized Kolyvagin system, which we denote by $\overline{\mathbf{KS}}$ (in \cite{BO19}, whenever a local condition \emph{\`a la} Greenberg at $p$ is involved the authors denote this condition by a $+$ sign). Using notation of \cite{MR_KS}, let $\bkappa$ be an element of
\[
\overline{\mathbf{KS}}\bigl(\T, \sF_{\Gr, 1}, \mathcal{P}'\bigr),
\]
where $\mathcal{P}'$ is defined in \S\ref{c-subsec}. Observe that all the hypotheses listed before Theorem~\ref{to complete} are needed to apply the arguments in the proof of \cite[Theorem~2.26]{BO19}.

Write $\kappa_1$ for the $\eta$-isotypical projection of ${_c} \mathcal{BF}_\psi^{\mbf}$, which is non-zero by Theorem~\ref{ES we obtain}.
Recall from Proposition~\ref{LZ 4.2.1} and Theorem~\ref{ES we obtain} that the classes $c'_m$ satisfy the Greenberg local condition at $p$. On the other hand, the arguments in the proof of \cite[Theorem~5.3.4]{LZ19} show that the Kolyvagin classes must satisfy the same condition at $p$. Now the theorem follows as in the proof of \cite[Theorem~3.6]{BO19}, whose arguments are of a completely formal nature. \end{proof}

\section{An algebraic functional equation} \label{sec:AFE}

In this section, we prove an algebraic functional equation for the characteristic ideals of the Pontryagin duals of the Greenberg Selmer groups.

\subsection{Selmer groups and their Pontryagin duals} \label{selmer-subsec}

Recall the (localized) Hecke algebra $\Lambda_{\mbf}$ from \S \ref{sec:lambdadic}. Since $\mbf$ is assumed to be non-Eisenstein, the residual Galois representation attached to the symmetric square representation $\Sym^2\mbf$ associated with $\mbf$ is also irreducible, as this property holds for the specialization at an arithmetic prime (\cite[Note 3.2.2 and p.~208, $(H.1^\sharp)$]{LZ19}). 

We also assume that $\mbf$ is $p$-distinguished and write $\mbT_{\mbf}$ for the lattice $\Sym^2 M(\mbf)^*$, on which there is a three-step filtration. Moreover, there is a short exact sequence
\[
0\longrightarrow \sF^1(\mbT_{\mbf}) \longrightarrow \mbT_{\mbf} \longrightarrow \Gr^0(\mbT_{\mbf}) \longrightarrow 0.
\]
Define
\[
A_{\mbf}\defeq\mbT_{\mbf}\otimes_{\Lambda_{\mbf}}\Hom_{\cont}(\Lambda_{\mbf},\Q_p/\Zp).
\]
The filtration on $\mbT_{\mbf}$ induces a filtration on $A_{\mbf}$ and there is a short exact sequence
\[
0\longrightarrow \sF^1(A_{\mbf}) \longrightarrow A_{\mbf} \longrightarrow \Gr^0(A_{\mbf}) \longrightarrow 0.
\]
In the following lines, $L/\Q$ is an algebraic extension.
\begin{Defi}
The \emph{Selmer group of $A_{\mbf}$ over $L$} is  
\begin{align*}
S(A_{\mbf}/L) & \defeq \ker \biggl(H^1(\Q_S/L, A_{\mbf})\longrightarrow \bigoplus_{w\in S\smallsetminus S_p} H^1( I_w, A_{\mbf}) \oplus \bigoplus_{w|p} H^1\bigl(I_w,\Gr^0(A_{\mbf})\bigr)\!\biggr).    
\end{align*}
\end{Defi}
Let us consider the Pontryagin dual
\begin{equation} \label{X-eq}
X(\mbT_{\mbf}/L)\defeq\Hom_{\cont}\bigl(S(A_{\mbf}/L),\Q_p/\Zp\bigr)
\end{equation}
of $S(A_{\mbf}/L)$. The natural action of $\Gamma_1=\Gal(\Q_{\cyc}/\Q)$ on $X(\mbT_{\mbf}/\Q_{\cyc})$ equips $X(\mbT_{\mbf}/\Q_{\cyc})$ with a $\Lambda_{\mbf}\llbracket\Gamma_1\rrbracket$-module structure.
Furthermore, a standard application of the topological version of Nakayama's lemma (\cite[Lemma~5.2.18]{NSW}) shows that $X(\mbT_{\mbf}/\Q_{\cyc})$ is a finitely generated $\Lambda_{\mbf}\llbracket\Gamma_1\rrbracket$-module. We also introduce the dual module
\[
\mbT_{\mbf}^* \defeq \Hom_{\Lambda_{\mbf}}\bigl(\mbT_{\mbf},\Lambda_{\mbf}(1)\bigr)
\]
and define the Selmer group attached to $\mbT_{\mbf}^*$ in an analogous fashion.

For simplicity, from here on we assume that 
$\Lambda_\mbf$ is a power series ring.
Let us write $f_k$ for the specialization of $\mbf$ of weight $k$ and trivial character and let $T_{f_k}$ be the Tate module of the corresponding symmetric square representation.

\begin{prop} \label{prop: Jha-Pal Thm 4.1}
The kernel and the cokernel of the natural specialization map
\[
X(\mbT_{\mbf}/\Q_{\cyc})\big/\nu_k X(\mbT_{\mbf}/\Q_{\cyc}) \xrightarrow{s_k^\vee} X(T_{f_k}/\Q_{\cyc})
\]
are finitely generated for all arithmetic primes $\nu_k$. Moreover, they are finite for all but finitely many arithmetic primes and there is an equality of characteristic ideals
\[
\charac_{\mathcal{O}_{\Q(f_k)}\llbracket T\rrbracket}\bigl( X(\mbT_{\mbf}/\Q_{\cyc})/\nu_k X(\mbT_{\mbf}/\Q_{\cyc})\bigr) = \charac_{\mathcal{O}_{\Q(f_k)}\llbracket T\rrbracket}\bigl( X(T_{f_k}/\Q_{\cyc})\bigr)
\]
for all but finitely many $k$.
\end{prop}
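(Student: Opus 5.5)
We argue by a control theorem in the style of Jha--Pal, comparing Selmer groups of discrete modules before and after specialization. Write $\mathcal{P}_k$ for the height-one prime of $\Lambda_\mbf$ corresponding to $\nu_k$. Since $\Lambda_\mbf$ is a power series ring and $\mbT_\mbf$ is free, there is an exact sequence
\[
0\longrightarrow A_{f_k}\longrightarrow A_\mbf\xrightarrow{\;\nu_k\;} A_\mbf\longrightarrow 0,
\]
so $A_{f_k}=A_\mbf[\mathcal{P}_k]$. The filtration is compatible with specialization, hence also $\Gr^0(A_{f_k})=\Gr^0(A_\mbf)[\mathcal{P}_k]$. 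Dualizing, the map $s_k^\vee$ is dual to the restriction $s_k\colon S(A_{f_k}/\Q_{\cyc})\to S(A_\mbf/\Q_{\cyc})[\mathcal{P}_k]$. It therefore suffices to control the kernel and cokernel of $s_k$ and then dualize.

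First, we set up the fundamental diagram. Its top row is
\[
0\to S(A_{f_k}/\Q_{\cyc})\to H^1(\Q_S/\Q_{\cyc},A_{f_k})\to \bigoplus_{w} \mathcal{H}_w(A_{f_k}),
\]
its bottom row is the corresponding sequence for $A_\mbf$ with $[\mathcal{P}_k]$ applied, and the vertical maps are induced by the inclusion $A_{f_k}\subset A_\mbf$.

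Second, we handle the global map. By the long exact sequence, the map $H^1(\Q_S/\Q_{\cyc},A_{f_k})\to H^1(\Q_S/\Q_{\cyc},A_\mbf)[\mathcal{P}_k]$ is surjective. Its kernel is $H^0(\Q_{\cyc},A_\mbf)/\nu_k$. Residual irreducibility of $\Sym^2$ (noted in \S\ref{selmer-subsec}) gives $H^0(\Q_{\cyc},\overline{\mbT_\mbf})=0$ after twist, so this kernel vanishes.

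Third, we treat the local maps, using the snake lemma. The kernel of $\mathcal{H}_w(A_{f_k})\to\mathcal{H}_w(A_\mbf)[\mathcal{P}_k]$ is a quotient of $H^0(I_w,A_\mbf)/\nu_k$ for $w\nmid p$, respectively of $H^0(I_w,\Gr^0 A_\mbf)/\nu_k$ for $w\mid p$. For $w\nmid p$ there are only finitely many such $w$ over $\Q_{\cyc}$. The module $H^0(I_w,A_\mbf)$ is cofinitely generated over $\Lambda_\mbf$, so its quotient by $\nu_k$ is cofinitely generated over $\cO$. Moreover, it is finite for all $k$ outside the support of the torsion part. For $w\mid p$, $\Gr^0$ is a rank-one module with unramified character times $\mbk$-power. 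Thus $H^0(I_w,\Gr^0 A_\mbf)$ is either zero or a corank-$\le1$ module whose reduction mod $\nu_k$ is finite except for finitely many $k$; hypothesis (H.0a) should even force vanishing. In every case the kernel and cokernel of $s_k$ are cofinitely generated over $\cO$, which proves the first assertion after dualizing. They are finite for all but finitely many $k$: a finitely generated $\Lambda_\mbf$-module $M$ has $M/\nu_k M$ finite, or $M[\nu_k]$ finite, away from finitely many height-one primes.

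Finally, the equality of characteristic ideals over $\cO\llbracket T\rrbracket$ follows because a map between finitely generated $\cO\llbracket T\rrbracket$-modules with finite kernel and cokernel (pseudo-null in dimension $2$) preserves characteristic ideals. The main obstacle is the $p$-adic local term. One must check uniformly in $k$ that $H^0(I_w,\Gr^0 A_\mbf)$ has no $\mathcal{P}_k$-contribution of positive $\cO$-corank; this requires using that the $\Gr^0$ character is nontrivial on $I_p$ for arithmetic weights (via (cond) and (dist)). It also requires a check that the Greenberg condition defined via $I_w$ rather than $G_w$ commutes with specialization, handled by the finite-index comparison between $H^1(G_w/I_w,\cdot)$ terms.
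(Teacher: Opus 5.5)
Your proposal is correct and follows essentially the paper's route. Both use the Jha--Pal control diagram comparing the Selmer group of $A_{f_k}=A_\mbf[\nu_k]$ with $S(A_\mbf/\Q_{\cyc})[\nu_k]$, a surjective global map, local kernels of the form $H^0(I_w,\cdot)/\nu_k$ controlled by structure theory over $\Lambda_\mbf$, and the snake lemma. The one variation is that you kill the global kernel $H^0(\Q_{\cyc},A_\mbf)/\nu_k$ outright via residual irreducibility of $\Sym^2$ (together with $\Gamma_1$ being pro-$p$), whereas the paper only shows that its dual $(\mbT_\mbf^\dagger)_{G_{\Q_{\cyc}},\tor}[\nu_k]$ is finite for almost all $k$ via Weierstrass preparation.

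Your worry about the $p$-adic term is unnecessary. $\Gr^0(\mbT_\mbf)$ is the piece $\sF^{-}\otimes\sF^{-}$, on which inertia at $p$ acts trivially. Hence $H^0(I_w,\Gr^0 A_\mbf)=\Gr^0 A_\mbf$ is $\Lambda_\mbf$-cofree, so it is $\nu_k$-divisible and that local kernel is simply zero, with no need for nontriviality on $I_p$.
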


\begin{proof}[Sketch of proof]
The proof is similar to that of \cite[Theorem~4.1]{JP14}, so we only offer a sketch of it. Since we are working over $\Q$, the dual Selmer groups $X(T_{f_k}/\Q_{\cyc})$ and $X(T_{f_k}^*/\Q_{\cyc})$ are torsion modules over the corresponding Iwasawa algebra, by a result of Kato (\cite{Kato}). Now consider the commutative diagram with exact rows
\[
\begin{footnotesize}
\xymatrix@C=20pt{
0\ar[r]& S(A_{\mbf}/\Q_{\cyc}) \ar[d]^-{s_k}\ar[r]& H^1(\Q_S/\Q_{\cyc},A_{\mbf}) \ar[d]^-{\eta_k}\ar[r]& \displaystyle{\bigoplus_{w\in S\setminus S_p} H^1(I_w,A_{\mbf}) \oplus \bigoplus_{w|p} H^1\Bigl(I_w,\Gr^0(A_{\mbf})\Bigr) \ar[d]^-{\gamma_k=\oplus \gamma_{k,v}}}\\
0\ar[r]& S(A_{\mbf}/\Q_{\cyc})[\nu_k] \ar[r]& H^1(\Q_S/\Q_{\cyc},A_{\mbf})[\nu_k] \ar[r]& \displaystyle{\bigoplus_{w\in S\setminus S_p} H^1(I_w,A_{\mbf})[\nu_k]\oplus \bigoplus_{w|p} H^1\bigl(I_w,\Gr^0(A_{\mbf})\Bigr)[\nu_k].}}
\end{footnotesize}
\]
We are interested in the kernel and the cokernel of $s_k$. Given a finitely generated $\Lambda_{\mbf}$-module $M$, let us set 
\[
M^{\dagger}\defeq\Hom_{\Lambda_{\mbf}}(M,\Lambda_{\mbf}). 
\]
With this notation in hand, we see that
\[
\ker(\eta_k)^\vee \simeq \Bigl(A_{\mbf}^{G_{\Q_{\cyc}}}/\nu_k\Bigr)^{\!\vee} \simeq \bigl(\mbT_{\mbf}^{\dagger}\bigr)_{ G_{\Q_{\cyc}}}[\nu_k].
\]
Since $\bigl(\mbT_{\mbf}^{\dagger}\bigr)_{ G_{\Q_{\cyc}}}$ is a finitely generated $\Lambda_{\mbf}$-module, for every $\nu_k$ the $\Zp$-module $\bigl(\mbT_{\mbf}^{\dagger}\bigr)_{ G_{\Q_{\cyc}}}[\nu_k]$ is finitely generated. If $\bigl(\mbT_{\mbf}^\dagger\bigr)_{G_{\Q_{\cyc}},\tor}$ is the torsion submodule of $\big(\mbT_{\mbf}^{\dagger}\big)_{ G_{\Q_{\cyc}}}$, then
\begin{equation} \label{iso-eq}
\ker(\eta_k)^\vee \simeq \big(\mbT_{\mbf}^{\dagger}\big)_{ G_{\Q_{\cyc}}}[\nu_k]=\bigl(\mbT_{\mbf}^\dagger\bigr)_{G_{\Q_{\cyc}},\tor}[\nu_k].
\end{equation}
Recall that we are assuming that $\Lambda_\mbf$ is a power series ring; apply the Weierstrass Preparation Theorem (\cite[Theorem~5.3.4]{NSW}) to the characteristic series of $\bigl(\mbT_{\mbf}^\dagger\bigr)_{G_{\Q_{\cyc}},\tor}$, which is viewed as a $\Lambda_\mbf$-module. If $S$ is the finite set of irreducible polynomials appearing in this characteristic series, then $\bigl(\mbT_{\mbf}^\dagger\bigr)_{G_{\Q_{\cyc}},\tor}[\nu_k]$ is finite for all $\nu_k \not\in S$; by \eqref{iso-eq}, this finiteness ensures that $\ker(\eta_k)^\vee$ is finite for all but finitely many $k$. Finally, the snake lemma implies that $\ker(s_k)$ is finite for all but finitely many $k$. Next, note that $\coker(\eta_k)$ is trivial. With arguments analogous to those above, one can show that $\ker(\gamma_{k,\nu})$ is finite for all $\nu\in S$, and the proposition follows. \end{proof}

We record a consequence on (Pontryagin duals of) Selmer groups.

\begin{prop}\label{torbig}
The  dual Selmer group $X(\mbT_{\mbf}/\Q_{\cyc})$ is a finitely generated torsion $\Lambda_\mbf\llbracket\Gamma_1\rrbracket$-module.
\end{prop}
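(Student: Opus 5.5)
The plan is to deduce torsionness from the control theorem, Proposition~\ref{prop: Jha-Pal Thm 4.1}, together with Kato's theorem for the classical specializations. Set $R\defeq\Lambda_\mbf\llbracket\Gamma_1\rrbracket$. Since $\Lambda_\mbf$ is assumed to be a power series ring in one variable over $\cO$, $R$ is a regular local ring of dimension $3$, isomorphic to $\cO\llbracket X,T\rrbracket$. Finite generation of $X\defeq X(\mbT_{\mbf}/\Q_{\cyc})$ over $R$ has already been observed via the topological Nakayama lemma, so only torsionness needs proof.

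Suppose, for contradiction, that $X$ has positive $R$-rank $r$. Choose a pseudo-isomorphism
\[
X\longrightarrow R^r\oplus\bigoplus_i R/(g_i)
\]
of finitely generated $R$-modules. For each arithmetic prime $\nu_k\subset\Lambda_\mbf$, generated by $\omega_k\in\Lambda_\mbf$, I would reduce this map modulo $\nu_k$. The quotient $R/\nu_kR$ is $\cO_{\Q(f_k)}\llbracket T\rrbracket$, possibly after a finite extension of scalars. The free part $R^r$ reduces to a free module of rank $r$. The torsion summands $R/(g_i)$ remain torsion modulo $\nu_k$ as long as $\omega_k\nmid g_i$, which can fail for only finitely many $k$, since the $g_i$ have finitely many height-one prime divisors. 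Reducing modulo $\nu_k$ also changes the pseudo-null kernel and cokernel only by modules that are torsion over $\cO_{\Q(f_k)}\llbracket T\rrbracket$ for all but finitely many $k$; this follows from the $\mathrm{Tor}$ long exact sequence, because pseudo-null modules have only finitely many height-two supports containing a given $\nu_k$. Consequently, for all but finitely many $k$, the quotient $X/\nu_kX$ has $\cO_{\Q(f_k)}\llbracket T\rrbracket$-rank exactly $r>0$.

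On the other hand, Proposition~\ref{prop: Jha-Pal Thm 4.1} says that the map $s_k^\vee\colon X/\nu_kX\to X(T_{f_k}/\Q_{\cyc})$ has kernel and cokernel that are finite for all but finitely many $k$. By Kato's theorem, already invoked in the proof of that proposition, $X(T_{f_k}/\Q_{\cyc})$ is torsion over $\cO_{\Q(f_k)}\llbracket T\rrbracket$; in the symmetric square setting this input comes via the Loeffler--Zerbes bound \cite{LZ19}. It follows that $X/\nu_kX$ is torsion for infinitely many $k$, contradicting the previous paragraph. Hence $r=0$ and $X$ is $R$-torsion.

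An alternative route avoids the structure theory: by the equality of characteristic ideals in Proposition~\ref{prop: Jha-Pal Thm 4.1}, $\charac(X/\nu_kX)\neq0$ for infinitely many $k$, and a finitely generated module $M$ over a regular local ring with $M/\omega M$ torsion over $R/\omega$ for some non-zero-divisor $\omega$ is itself torsion, since $\operatorname{rank}_R M\le\operatorname{rank}_{R/\omega}(M/\omega M)$.

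The main obstacle is the input that the specialized Selmer groups $X(T_{f_k}/\Q_{\cyc})$ are torsion. For the symmetric square this is not literally Kato's theorem, and one must make sure it is available. I would first try to obtain it from the Loeffler--Zerbes bound for $\Sym^2 f_k$ on the appropriate parity component, combined with the functional equation and the non-vanishing of the bottom class, or else take it as the standard input cited in the preceding proof. I would use the rank inequality of the alternative route, which is cleaner than tracking pseudo-null modules.
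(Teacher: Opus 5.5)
Your proposal is correct and is essentially the paper's argument. Both use the control theorem (Proposition~\ref{prop: Jha-Pal Thm 4.1}) at a good arithmetic prime $\nu_k$, then the torsionness of $X(T_{f_k}/\Q_{\cyc})$, and then lift torsionness from $X/\nu_kX$ to $X$. Your inequality $\operatorname{rank}_R M\le\operatorname{rank}_{R/\nu_kR}(M/\nu_kM)$ (localize at the height-one prime $\nu_kR$) is the precise form of the paper's remark that $X$ cannot contain a free submodule, and one good $k$ suffices.

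Your caveat is also well taken. The paper cites Kato for the torsionness of $X(T_{f_k}/\Q_{\cyc})$, which for the symmetric square is not literally Kato's theorem. So your argument and the paper's rest on the same external input.
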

\begin{proof}
Applying Proposition~\ref{prop: Jha-Pal Thm 4.1}, we may choose an arithmetic prime $\nu_k$ such that the kernel and the cokernel of the map $s_k^\vee$ are finite.
By a result of Kato \cite{Kato}, $X(T_{f_k}/\Q_{\cyc})$ is torsion over $\cO_{\Q(f_k)}\llbracket\Gamma_1\rrbracket$. The proposition follows by noting that the arithmetic prime $\nu_k$ is a height $1$ prime ideal of $\Lambda_\mbf\llbracket\Gamma_1 \rrbracket$ and hence $X(\mbT_{\mbf}/\Q_{\cyc})$ cannot contain a $\Lambda_\mbf\llbracket \Gamma_1\rrbracket$-free submodule. \end{proof}

\begin{Remark}\label{remark:dual}
Propositions \ref{prop: Jha-Pal Thm 4.1} and \ref{torbig} are also true for $X(\mbT_{\mbf}^*/\Q_{\cyc} )^\iota$ in place of $X(\mbT_{\mbf}/\Q_{\cyc})$: the same proofs apply.
\end{Remark}

The following proposition, which is essentially due to Ochiai (\emph{cf.} \cite[\S3]{Ochiai-fourier}), provides sufficient conditions for obtaining an algebraic functional equation over a two-variable Iwasawa algebra using a descent argument to a one-variable setting. 

\begin{prop}[Ochiai]
\label{prop: JP 4.7}
Let $\mathcal{O}$ be the ring of integers of a finite extension of $\Q_p$, let $\Lambda^2_{\mathcal{O}}\defeq\mathcal{O}\llbracket T_1, T_2\rrbracket$ be a two-variable power series $\mathcal O$-algebra and let $M,N$ be finitely generated torsion $\Lambda^2_{\mathcal{O}}$-modules. Let ${\{\ell_i\}}_i$ be a countably infinite set of height $1$ prime ideals of $\mathcal{O}\llbracket T_1 \rrbracket$ such that 
\begin{enumerate}[\textup{(}i\textup{)}]
    \item there is a finite extension $\mathcal{O}'$ of $\mathcal{O}$ such that $\mathcal{O}\llbracket T_1\rrbracket/\ell_{i} \subset \mathcal{O}'$ for each $i$;
    \item for each $i$, both $M/\ell_i$ and $N/\ell_i$ are torsion $\Lambda_{\mathcal{O}}^2/\ell_i$-modules;
    \item for each $i$ and $W\in\{M,N\}$, the image of $\charac_{\Lambda_{\mathcal{O}}^2}(W)$ in $\Lambda_{\mathcal{O}^2}/\ell_i$ is equal to $\charac_{\Lambda_{\mathcal{O}}^2/\ell_i}(W/\ell_i)$ as ideals of $\Lambda_{\mathcal{O}}^2/\ell_i$.
\end{enumerate}
Finally, suppose that $\charac_{\Lambda_{\mathcal{O}}^2/\ell_i}(M/\ell_i)=\charac_{\Lambda_{\mathcal{O}}^2/\ell_i}(N/\ell_i)$ for each $i$. Then there is an equality $\charac_{\Lambda_{\mathcal{O}}^2}(M)=\charac_{\Lambda_{\mathcal{O}}^2}(N)$ of characteristic ideals.
\end{prop}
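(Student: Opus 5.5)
We compare characteristic ideals prime by prime in the UFD $\Lambda^2_{\mathcal{O}}$. Write $\charac(M)=(F)$ and $\charac(N)=(G)$ with $F,G\in\Lambda^2_{\mathcal{O}}$ nonzero. The aim is to show that $F$ and $G$ differ by a unit. Hypothesis (iii) together with the assumed equality of the specialized characteristic ideals gives
\[
(F \bmod \ell_i)=(G\bmod \ell_i)
\]
as ideals of $\Lambda^2_{\mathcal{O}}/\ell_i$ for every $i$. Hypothesis (ii) guarantees that these images are nonzero. Hypothesis (i) says that $\Lambda^2_{\mathcal{O}}/\ell_i$ embeds in $\mathcal{O}'\llbracket T_2\rrbracket$ via $T_1\mapsto t_i$ for some $t_i$ in the maximal ideal of $\mathcal{O}'$. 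So the specialized statement is an equality of ideals in a one-variable Iwasawa algebra over a fixed ring. After possibly passing to the integral closure, the Weierstrass preparation theorem gives equal $\mu$-invariants and equal distinguished polynomials.

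\emph{First step.} Reduce to the coprime case. Cancel $\gcd(F,G)$, which multiplies both specializations by the same nonzero element. We may therefore assume that $F$ and $G$ share no prime factor, and we must show both are units.

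\emph{Second step.} Suppose some prime $P$ divides $F$. The idea is that $G$ is then a unit modulo $P$ in a quantitative sense, so it cannot specialize into the ideal generated by $F$ at infinitely many $\ell_i$. Concretely, since $F$ and $G$ are coprime in the two-dimensional regular local ring $\Lambda^2_{\mathcal{O}}$, the quotient $\Lambda^2_{\mathcal{O}}/(F,G)$ has finite length, or at least dimension $\le 1$. Hence the ideal $(F,G)$ contains an element $h\in\mathcal{O}\llbracket T_1\rrbracket$, nonzero: a resultant-type element, obtainable via Weierstrass preparation in $T_2$ after a change of variables. From $(F)\equiv(G)$ modulo $\ell_i$ we get that $F\bmod\ell_i$ divides $G\bmod\ell_i$ and vice versa. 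If $F$ is a nonunit, then $F\bmod \ell_i$ is a nonunit for all but finitely many $i$, because $F\in\ell_i+\mathfrak m$ happens only if $F$ is a nonunit. Then $(F,G)\bmod\ell_i=(F\bmod\ell_i)$ is a proper ideal of $\mathcal{O}'\llbracket T_2\rrbracket$ containing $h(t_i)$. Conversely, we can arrange, replacing $h$ via the norm-type argument, that this forces $h(t_i)$ into a fixed proper ideal, giving $h\in\ell_i$ or $h(t_i)$ non-unit. A nonzero $h\in\mathcal{O}\llbracket T_1\rrbracket$ lies in only finitely many height-one primes, since it has finitely many prime factors. Because the $\ell_i$ are infinitely many distinct primes, for almost all $i$ the element $h(t_i)$ is a nonzero element of $\mathcal{O}'$.

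\emph{Main obstacle.} The delicate point is to turn "$h(t_i)$ is nonzero" into a contradiction. It need not be a unit: $(F\bmod\ell_i)$ may well contain a nonzero constant of $\mathcal{O}'$, for instance when $F=p$. I would handle this by splitting $F$ into its $\mu$-part and its $\lambda$-part. The power of $\varpi$ dividing $F$ and $G$ is read off uniformly from the $\mu$-invariants of the specializations: using (i), $\mathcal{O}'$ is fixed, so the valuations are comparable, and for generic $i$ the $\mu$-invariant of $F\bmod\ell_i$ equals $\mu_{T_2}$-content of $F$. This matches the powers of $\varpi$ in $F$ and $G$. For the remaining factors, I compare the degrees of the distinguished polynomials in $T_2$ (after a generic linear change of variables making $F$ and $G$ $T_2$-regular). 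This shows that the total $\lambda$-invariants agree. A coprime nonunit factor would then contribute a common zero locus meeting infinitely many of the curves $T_1=t_i$, which contradicts the finiteness of the zero set of $h$. This completes the argument that $(F)=(G)$.
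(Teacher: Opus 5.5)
The gap is at the step you yourself flag as the main obstacle, and it cannot be closed. You claim that the power of $\varpi$ dividing $F$ can be read off from the $\mu$-invariants of the specializations $F \bmod \ell_i$ for generic $i$. This is false. Since $\mathcal{O}\llbracket T_1\rrbracket/\ell_i\hookrightarrow\mathcal{O}'$ sends $T_1$ to some $t_i\in\mathfrak{m}_{\mathcal{O}'}$, every prime factor of $F$ lying in $(\varpi,T_1)\Lambda^2_{\mathcal{O}}$, not only $\varpi$, specializes to an element with positive $\mu$-invariant at \emph{every} $\ell_i$. Such a specialization may well have $\lambda$-invariant $0$. For instance, $F=T_1$ is not divisible by $\varpi$, yet $F\bmod\ell_i=t_i$. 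These factors are invisible to your zero-locus argument, because their specializations have no zeros in $T_2$, and they are indistinguishable from powers of $\varpi$.

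In fact the proposition as transcribed is false, so no repair of this step is possible. Take $\mathcal{O}=\Zp$, $M=\Lambda^2_{\mathcal{O}}/(T_1)$, $N=\Lambda^2_{\mathcal{O}}/(p)$ and $\ell_i=(T_1-p-p^{i+1})$ for $i\geq1$. Then $\mathcal{O}\llbracket T_1\rrbracket/\ell_i\simeq\Zp$ and $\Lambda^2_{\mathcal{O}}/\ell_i\simeq\Zp\llbracket T_2\rrbracket$. Since $p+p^{i+1}=p(1+p^i)$, we get $M/\ell_i\simeq N/\ell_i\simeq\Zp\llbracket T_2\rrbracket/(p)$. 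So (i), (ii), (iii) and the final hypothesis all hold, every relevant ideal being $(p)$, while $(T_1)\neq(p)$. Replacing $T_1$ by $p+T_1T_2$ gives a counterexample even for $\ell_k=\bigl((1+T_1)-(1+p)^k\bigr)$, $k\geq0$, because $p+\bigl((1+p)^k-1\bigr)T_2$ is $p$ times a unit of $\Zp\llbracket T_2\rrbracket$. Here $F=T_1$ and $G=p$ are coprime non-units, so the target of your first step (``both are units'') does not follow from the hypotheses. The paper offers no argument beyond the citation \cite[Proposition~4.7]{JP14}, so what is missing is an extra hypothesis, not a cleverer argument.

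Your strategy does work under a hypothesis excluding prime factors of the characteristic ideals that lie in $(\varpi,T_1)\Lambda^2_{\mathcal{O}}$, for example $M$ and $N$ finitely generated over $\mathcal{O}\llbracket T_1\rrbracket$. In that case you can do it more simply:

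1. Weierstrass preparation in $T_2$ over $\mathcal{O}\llbracket T_1\rrbracket$ lets you take $F$ and $G$ to be distinguished polynomials in $T_2$.
2. Their images modulo $\ell_i$ are distinguished polynomials over $\mathcal{O}'$.
3. By uniqueness in Weierstrass preparation, $(F\bmod\ell_i)=(G\bmod\ell_i)$ forces $F\bmod\ell_i=G\bmod\ell_i$.
4. Each coefficient of $F-G$ is then an element of $\mathcal{O}\llbracket T_1\rrbracket$ lying in infinitely many distinct height-one primes, hence zero.

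Some smaller points:
\begin{enumerate}
\item $\Lambda^2_{\mathcal{O}}$ has Krull dimension $3$, so $\Lambda^2_{\mathcal{O}}/(F,G)$ is one-dimensional, not of finite length.
\item Your element $h$ does exist, but for a different reason: every height-two prime of $\Lambda^2_{\mathcal{O}}$ meets $\mathcal{O}\llbracket T_1\rrbracket$ non-trivially, since a one-dimensional quotient not killed by $\varpi$ is finite over $\mathcal{O}$.
\item A generic linear change of variables mixing $T_1$ and $T_2$ is not available, since the $\ell_i$ are tied to $T_1$.
\end{enumerate}
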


\begin{proof}
This is \cite[Proposition~4.7]{JP14}.
\end{proof}

\subsection{An algebraic functional equation} \label{alg-subsec}

Recall from \S\ref{sec:lambdadic} that we are assuming throughout that $\mbf$ is non-Eisenstein and $p$-distinguished; moreover, recall that, in this section, $\Lambda_\mbf$ is taken to be a power series ring. Now we prove the algebraic functional equation we are interested in; see also \S\ref{alg-subsec2} for a different, somewhat more abstract perspective on algebraic functional equations.

\begin{Th} \label{thm: alg func eqn}
Assume that $X(\mbT_{\mbf}/\Q_{\cyc})$ and $X(\mbT_{\mbf}^*/\Q_{\cyc})^{\iota}$ have no non-zero pseudonull $\Lambda_{\mbf}\llbracket\Gamma_1\rrbracket$-submodules. Then there is an equality
\[
\charac_{\Lambda_{\mbf}\llbracket\Gamma_1\rrbracket}\bigl(X(\mbT_{\mbf}/\Q_{\cyc})\bigr)=\charac_{\Lambda_{\mbf}\llbracket\Gamma_1\rrbracket}\bigl(X(\mbT_{\mbf}^*/\Q_{\cyc})^{\iota}\bigr).
\]
\end{Th}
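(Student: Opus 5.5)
The plan is a descent argument: apply Ochiai's criterion, Proposition~\ref{prop: JP 4.7}, with $\Lambda^2_{\cO}=\Lambda_\mbf\llbracket\Gamma_1\rrbracket\simeq\cO\llbracket T_1,T_2\rrbracket$. Here $T_1$ is the weight variable of $\Lambda_\mbf$ (a power series ring by assumption) and $T_2$ corresponds to a topological generator of $\Gamma_1$. We take $M\defeq X(\mbT_\mbf/\Q_{\cyc})$ and $N\defeq X(\mbT_\mbf^*/\Q_{\cyc})^\iota$. Both are finitely generated torsion $\Lambda^2_{\cO}$-modules by Proposition~\ref{torbig} and Remark~\ref{remark:dual}. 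For the primes $\ell_i$ we take the arithmetic primes $\nu_k$ of weight $k$ and trivial character. We discard the finitely many $k$ excluded in Proposition~\ref{prop: Jha-Pal Thm 4.1} (for $M$ and for $N$), and also those finitely many $k$ for which $\nu_k$ divides the image in $\cO\llbracket T_1\rrbracket$ of certain annihilators; this is made precise below.

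We then check hypotheses (i)--(iii). Condition (i) holds because $\cO\llbracket T_1\rrbracket/\nu_k\simeq\cO$ for trivial-character arithmetic primes, so $\cO'=\cO$ works. Condition (ii) follows from Proposition~\ref{prop: Jha-Pal Thm 4.1}: $M/\nu_k M$ is pseudo-isomorphic to $X(T_{f_k}/\Q_{\cyc})$, which is torsion by Kato, and the same holds for $N$. Condition (iii) is the compatibility of characteristic ideals with reduction modulo $\nu_k$. This is where the hypothesis that $M$ and $N$ have no non-zero pseudonull submodules enters. By the structure theorem, $M$ then injects into $E=\bigoplus_j\Lambda^2_{\cO}/(g_j)$ with pseudonull cokernel $C$. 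Reducing modulo $\nu_k$ and using the snake lemma, we get that $M[\nu_k]=0$ as soon as $\nu_k\nmid g_j$ for all $j$. The terms $E/\nu_k$ and $C/\nu_k$ are then controlled, and $C/\nu_k$, $C[\nu_k]$ are finite, hence pseudonull over $\cO\llbracket T_2\rrbracket$, for all $k$ outside a finite set, since $C$ is supported in codimension $\geq 2$. It follows that $\charac(M/\nu_k)=\charac(M)\bmod \nu_k$ for almost all $k$, and likewise for $N$. Shrinking $\{\ell_i\}$ to a cofinite subset keeps it countably infinite.

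The main step is the fiberwise equality
\[
\charac_{\cO\llbracket\Gamma_1\rrbracket}(M/\nu_k)=\charac_{\cO\llbracket\Gamma_1\rrbracket}(N/\nu_k).
\]
Proposition~\ref{prop: Jha-Pal Thm 4.1} reduces this to
\[
\charac\bigl(X(T_{f_k}/\Q_{\cyc})\bigr)=\charac\bigl(X(T_{f_k}^*/\Q_{\cyc})^\iota\bigr)
\]
for the classical symmetric square of $f_k$. This is the one-variable algebraic functional equation à la Greenberg and Perrin-Riou. I would prove it with the Cassels--Tate/Flach-type pairing on Selmer groups over the finite layers $\Q_n$, which is perfect modulo finite errors bounded independently of $n$. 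Taking the limit over $n$ gives, following Greenberg's argument for ordinary representations, that $X(T_{f_k})$ and $X(T_{f_k}^*)^\iota$ differ only by pseudonull modules. The input needed is that the Greenberg local conditions for $T_{f_k}$ and $T_{f_k}^*$ are exact orthogonal complements under local Tate duality. This holds because $\sF^i\mbA$ was defined as the orthogonal complement of $\sF^{3-i}\mbT$, and the condition $\Gr^0$ for $\mbT$ is dual to $\sF^1$-type conditions for $\mbT^*$.

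I expect this last step to be the main obstacle. It requires the local and global $H^0$ terms to be finite, and for this I would invoke (H.0), (H.0a) and residual irreducibility of $\Sym^2$. It also requires careful handling of the unramified versus inertia-defined local conditions at $\ell\neq p$, for which I would use (Tam). Once the fiberwise equality holds for infinitely many $k$, Proposition~\ref{prop: JP 4.7} gives the stated equality of characteristic ideals.
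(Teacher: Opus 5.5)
Your proposal follows the paper's proof. Both apply Ochiai's criterion (Proposition~\ref{prop: JP 4.7}) to $M=X(\mbT_\mbf/\Q_{\cyc})$ and $N=X(\mbT_\mbf^*/\Q_{\cyc})^\iota$, with arithmetic primes $\nu_k$ as the $\ell_i$. Hypothesis (ii) comes from Proposition~\ref{prop: Jha-Pal Thm 4.1} plus Kato, and (iii) from the no-pseudonull hypothesis. The fiberwise equality is reduced, via Proposition~\ref{prop: Jha-Pal Thm 4.1} and Remark~\ref{remark:dual}, to the one-variable functional equation. The paper simply quotes that equation from \cite[Theorem~2]{Gre89} rather than re-deriving it from a Cassels--Tate pairing. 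For (i), the paper takes arithmetic points of bounded $p$-power level and uses a bounded-degree argument. Your claim $\cO\llbracket T_1\rrbracket/\nu_k\simeq\cO$ is fine only because you take $T_1$ to be the weight variable itself.

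The one step that is wrong as written is your verification of (iii). It is not true that $C/\nu_kC$ and $C[\nu_k]$ are finite for all but finitely many $k$. A pseudonull $C$ can have the height-two prime $(\varpi,T_1)$ in its support, and this prime contains every $\nu_k$, since $\nu_k$ is generated by $T_1-a_k$ with $a_k\in\varpi\cO$. For example, if $C=\Lambda_\mbf\llbracket\Gamma_1\rrbracket/(\varpi,T_1)$, then $C/\nu_kC=C[\nu_k]=C\simeq\F\llbracket T_2\rrbracket$ for every $k$ ($\F$ the residue field of $\cO$), with characteristic ideal $(\varpi)$.

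Your conclusion still holds, by a different argument. For every height-two prime $Q\supset\nu_k$, the localization $C_Q$ has finite length, so multiplication by a generator of $\nu_k$ on $C_Q$ has kernel and cokernel of equal length. Hence $\charac(C[\nu_k])=\charac(C/\nu_kC)$ over $\cO\llbracket T_2\rrbracket$. These two factors cancel in $0\to C[\nu_k]\to M/\nu_kM\to E/\nu_kE\to C/\nu_kC\to 0$, giving $\charac(M/\nu_kM)=\charac(M)\bmod\nu_k$. Moreover, (ii) already forces $\nu_k\nmid g_j$, so no primes need to be discarded at this stage.

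Finally, a caution on the fiberwise step. Under the convention $\sF^i\leftrightarrow\sF^{3-i}$, the condition dual to the $\Gr^0$ (rank-two $\sF^1$) condition on $\mbT_\mbf$ is the rank-one $\sF^2$ condition on $\mbT_\mbf^*$, not an $\sF^1$ condition. The Selmer group of $\mbT_\mbf^*$ has to be read that way for Greenberg's theorem to apply.
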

\begin{proof}
We apply Proposition~\ref{prop: JP 4.7} with $M = X(\mbT_{\mbf}/\Q_{\cyc})$ and $N = X(\mbT_{\mbf}^*/\Q_{\cyc})^{\iota}$. With notation as in the statement of Proposition~\ref{prop: JP 4.7}, our choice of ${\{\ell_i\}}_i$ requires fixing an arbitrary positive integer $r_0$. Namely, the (countably infinite) set of height 1 primes we consider here is the set of all arithmetic points $\nu_k$ such that the corresponding specializations $f_k$ have level $Np^r$ for all $r\leq r_0$. This means that there exists a finite extension $\cO'$ of $\Zp$ such that $\cO_{f_k}\subset\cO'$ for every arithmetic prime $\nu_k$. By Proposition~\ref{prop: Jha-Pal Thm 4.1}, the kernel and the cokernel of the map
\[
X(\mbT_{\mbf}/\Q_{\cyc})\big/\nu_k X(\mbT_{\mbf}/\Q_{\cyc})\xrightarrow{s_k^\vee} X(T_{f_k}/\Q_{\cyc})
\]
are finite for all but finitely many arithmetic primes. Moreover, by \cite{Kato}, we know that $X(T_{f_k}/\Q_{\cyc})$ is torsion over $\cO_{\Q(f_k)}\llbracket\Gamma_1\rrbracket$, so the quotient
\[
X(\mbT_{\mbf}/\Q_{\cyc})\big/\nu_k X(\mbT_{\mbf}/\Q_{\cyc})
\] 
is also torsion over $\cO_{\Q(f_k)}\llbracket\Gamma_1\rrbracket$. In view of Remark \ref{remark:dual}, similar arguments lead us to conclude that
\[
X(\mbT_{\mbf}^*/\Q_{\cyc})^{\iota}\big/\nu_k X(\mbT_{\mbf}^*/\Q_{\cyc})^{\iota}
\]
is torsion over $\cO_{\Q(f_k)}\llbracket\Gamma_1\rrbracket$ as well. Thus, condition (ii) of Proposition~\ref{prop: JP 4.7} is satisfied. Moreover, notice that condition (iii) in Proposition~\ref{prop: JP 4.7} is true because, by assumption, $X(\mbT_{\mbf}/\Q_{\cyc})$ and $X(\mbT_{\mbf}^*/\Q_{\cyc})^{\iota}$ have no non-zero pseudonull $\Lambda_{\mbf}\llbracket \Gamma\rrbracket$-submodules. Therefore, it suffices to show that 
\begin{align*}
 \charac_{\mathcal{O}_{\Q(f_k)}\llbracket T\rrbracket}\bigl( X(\mbT_{\mbf}/\Q_{\cyc})/\nu_k X(\mbT_{\mbf}/\Q_{\cyc})\bigr) 
 &=\charac_{\mathcal{O}_{\Q(f_k)}\llbracket T\rrbracket}\bigl( X(\mbT_{\mbf}^*/\Q_{\cyc})^\iota/\nu_k X(\mbT_{\mbf}^*/\Q_{\cyc})^\iota\bigr)\\
 &=\charac_{\mathcal{O}_{\Q(f_k)}\llbracket T\rrbracket}\bigl( X(\mbT_{\mbf}^*/\Q_{\cyc})/\nu_k X(\mbT_{\mbf}^*/\Q_{\cyc})\bigr)^\iota.
\end{align*}
Again, by Proposition~\ref{prop: Jha-Pal Thm 4.1} and Remark \ref{remark:dual} we are reduced to showing that 
\begin{equation} \label{char-final-eq}
\charac_{\mathcal{O}_{\Q(f_k)}\llbracket T\rrbracket}\bigl( X(T_{f_k}/\Q_{\cyc})\bigr)=\charac_{\mathcal{O}_{\Q(f_k)}\llbracket T\rrbracket}\bigl(X(T_{f_k}^*/\Q_{\cyc})\bigr)^\iota
\end{equation}
for all but finitely many $k$. To conclude, observe that equality \eqref{char-final-eq} is the algebraic functional equation for $X(T_{f_k}/\Q_{\cyc})$ that was proved (in greater generality) in \cite[Theorem~2]{Gre89}. 
\end{proof}

\begin{Remark}
We expect it should be possible to avoid the assumption that  $X(\mbT_{\mbf}/\Q_{\cyc})$ and $X(\mbT_{\mbf}^*/\Q_{\cyc})^{\iota}$ have no non-zero pseudonull $\Lambda_{\mbf}\llbracket\Gamma_1\rrbracket$-submodules provided we can show the analogue of \cite[Proposition~4.8]{JP14} in our setting.
\end{Remark}

\section{Towards the Iwasawa main conjecture} \label{div-sec}

In this section, we prove one divisibility in the Iwasawa main conjecture for the symmetric square of a Hida family. As will be apparent, our proof builds crucially on recent work by B\"uy\"ukboduk--Ganguly on functional equations of algebraic $p$-adic $L$-functions (\cite{BG}).

\subsection{\texorpdfstring{$p$}{}-adic \texorpdfstring{$L$}{}-functions} \label{S:pathologies}

We briefly introduce the $p$-adic $L$-functions that will appear in our divisibility result in the direction of the Iwasawa main conjecture for $\Sym^2\mbf$. 


\subsubsection{Perrin-Riou's big dual exponential map} \label{PR-subsubsec}

The \emph{Perrin-Riou big dual exponential map} (or \emph{big logarithm map}) is the map
\[
\mathcal{L}:H^1_{\Iw}\bigl(\Q_{p,\infty},\Gr^1\mbV\bigr)^{{\nu}}\longrightarrow \Lambda(\Gamma)^{\nu}\otimes\D_{\cris}(\Gr^1\mbV)
\]
obtained by applying the (formal) argument in \cite[Theorem~8.2.8]{KLZ17} (see also \cite[Theorem~5.15]{BO19}); here $\nu$ and $\Gr^1(-)$ are as in \S\ref{S: results 1}. Define the homomorphisms of $\Lambda_{\mbf}$-modules $\omega_{\mbf}$ and $\eta_{\mbf}$ as in the statement of \cite[Proposition~10.1.1]{KLZ17}, where we write $\eta_{\mbf}$ in place of $\eta_{\mba}$. Furthermore, set
\[
\xi_{\mbf}\defeq\frac{1}{2}(\eta_{\mbf}\otimes\omega_{\mbf}+\omega_{\mbf}\otimes \eta_{\mbf}),\quad\xi_{\mbf,\psi}\defeq G(\psi^{-1})\cdot \xi_{\mbf}\in \D_{\cris}\bigl(V^{*}(1)\bigr),
\]
where $G(\psi^{-1})$ is the classical Gauss sum associated with $\psi^{-1}$.

\subsubsection{Three-variable \texorpdfstring{$p$}{}-adic \texorpdfstring{$L$}{}-functions and factorization} \label{3-subsubsec}

Let $\mbf$ and $\mbg$ be two Hida families and, with notation as in \cite[\S3.4]{Das16}, let $\mathcal W_{\mbf}$ and $\mathcal W_{\mbg}$ be the associated rigid-analytic spaces that are finite covers of two suitable connected components of the $p$-adic weight space $\mathcal W$, whose set of $\C_p$-points is given by $\mathcal W(\C_p)\defeq\Hom_{\mathrm{cont}}(\Zp^\times,\C_p^\times)$. Let $\psi$ be a Dirichlet character. In \cite{Hida-fourier}, Hida constructed a three-variable $p$-adic $L$-function $L_p(\mbf,\mbg,\psi,\kappa,\lambda,\sigma)$ on $\mathcal W_{\mbf}\times\mathcal W_{\mbg}\times\mathcal W$; this function interpolates the algebraic parts of the critical values of certain Rankin $L$-series of the specializations of $\mbf$ and $\mbg$ at $\kappa$ and $\lambda$, respectively, twisted by $\psi$. From here on, we specialize to the case $\mbf=\mbg$ and write
\[ L_p(\mbf,\mbf,\psi,1+\mbj)\in I_{\mbf}\,\widehat{\otimes}\, \Lambda_{\mbf}\,\widehat{\otimes}\,\Lambda(\Gamma)\otimes_{\Zp}\Zp[\Bmu_N] \otimes \text{im}(\psi) \] 
for the three-variable Rankin--Selberg $p$-adic $L$-function mentioned in \cite[Theorem~7.7.2]{KLZ17}.

On the half of the weight space where $\psi(-1)=-\sigma(-1)$, Hida similarly defined a three-variable $p$-adic $L$-function attached to the symmetric square $\Sym^2\mbf$ of $\mbf$, which we denote by $L_p(\Sym^2\mbf,\psi,\kappa,\sigma)$. A crucial ingredient for our purposes is the relationship between these two analytic objects. By the work of Dasgupta (\cite{Das16}), the Rankin--Selberg $p$-adic $L$-function factors over this half of the weight space as
\begin{equation}
\label{L-fact-eq}
L_p(\mbf,\mbf,\psi,1+\mbj)=L_p(\Sym^2\mbf,\psi,\kappa,\sigma)\cdot L_p(\psi\epsilon_{\mbf},s),
\end{equation}
where $L_p(\psi\epsilon_{\mbf},s)$ is the corresponding three-variable Kubota--Leopoldt $p$-adic $L$-function attached to the Dirichlet character $\psi\epsilon_{\mbf}$; here, as in \S\ref{classes-subsubsec}, $\varepsilon_\mbf$ is the prime-to-$p$ part of the Nebentypus of $\mbf$. It is exactly factorization \eqref{L-fact-eq} that allows us to extract the symmetric square $p$-adic $L$-function from the regulator image of the Beilinson--Flach classes in \S\ref{div-sec}. 

\subsection{Selmer complexes and the algebraic functional equation} \label{alg-subsec2}

In \S\ref{alg-subsec}, we established the algebraic functional equation for the classical Greenberg dual Selmer groups via a descent argument. However, in order to deduce one divisibility in the Iwasawa main conjecture for the symmetric square of a Hida family we must bypass the parity obstruction preventing a direct comparison of the Kolyvagin bound from Theorem~\ref{to complete} with the $p$-adic $L$-function. Achieving this requires us to lift the functional equation to the level of derived categories. To do so, we employ the Iwasawa-theoretic Selmer complexes developed by B\"uy\"ukboduk--Ganguly in \cite{BG}. This machinery guarantees the necessary perfectness and amplitude conditions to establish an equality of determinants, which transports our bound across the weight space.

In the lines below, we use notation from \cite{BG} freely: the reader is advised to keep a copy of \cite{BG} close at hand. With this \emph{caveat} in mind, let $T\defeq\Sym^2 T_f^*(1+\psi)$ and define its cyclotomic deformation $\overline{T}\defeq T\otimes_{R_f}\!R^\iota$. Attached to the data $\bigl(\overline{T}, \mathcal{F}^i\overline{T}\bigr)$, we consider the Greenberg Selmer complex $\mathrm{\mathbf{R}} \Gamma\bigl(\overline{T},\mathcal{F}^i\overline{T}\bigr)$ for $i\in\{1,2\}$. First, we build a bridge between the cohomology of these Selmer complexes and the classical Greenberg Selmer groups $X(T_f/\Q_{cyc})$ studied in \S\ref{sec:AFE}.

\begin{Lemma} \label{surj-lemma}
For $i \in \{1, 2\}$, there exists a surjection
\begin{equation} \label{surj-eq}
R^1\Gamma\Bigl(\overline{T}^\vee(1),\overline{\mathcal{F}^i T^\vee}(1)\!\Bigr)\longepi H^1_{\Gr,i}\bigl(\Q(\Bmu_{p^\infty}),T^\vee(1)\bigr).
\end{equation}
Furthermore, the kernel of this map is cotorsion as an $R_f[\![\Gamma]\!]$-module.
\end{Lemma}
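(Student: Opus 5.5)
The plan is to compare the Nekovář-style Selmer complex with the classical Greenberg Selmer group through the standard exact triangle that defines the Selmer complex. This is the comparison carried out in \cite[\S7]{BG} and \cite[\S9.6]{Nek-selmer}. By construction, $\mathbf{R}\Gamma\bigl(\overline{T}^\vee(1),\overline{\mathcal{F}^i T^\vee}(1)\bigr)$ is the mapping cone (shifted by $-1$) of
\[
C^\bullet_{\cont}\bigl(G_{\Q,S},\overline{T}^\vee(1)\bigr)\oplus\bigoplus_{v\in S}U_v^+\longrightarrow\bigoplus_{v\in S}C^\bullet_{\cont}\bigl(G_{\Q_v},\overline{T}^\vee(1)\bigr).
\]
At $v=p$ the local condition $U_p^+$ is $C^\bullet(G_{\Q_p},\overline{\mathcal{F}^iT^\vee}(1))$, and at $v\neq p$ it is the unramified condition. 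Taking the long exact sequence in degree $1$ gives
\[
\bigoplus_{v\in S}H^0(\Q_v,\overline{T}^\vee(1))\to R^1\Gamma\to H^1(G_{\Q,S},\overline{T}^\vee(1))\oplus\bigoplus_v H^1(U_v^+)\to\bigoplus_v H^1(\Q_v,\overline{T}^\vee(1)).
\]
I would first use Shapiro's lemma to identify $H^\bullet(G_{\Q,S},\overline{T}^\vee(1))$ with $H^\bullet(\Q_S/\Q(\Bmu_{p^\infty}),T^\vee(1))$, and similarly for the local terms. The image of $R^1\Gamma$ in $H^1(G_{\Q,S},\overline{T}^\vee(1))$ then consists of the classes whose localizations lie in the images of $H^1(U_v^+)$. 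This gives a natural map $R^1\Gamma\to H^1_{\Gr,i}\bigl(\Q(\Bmu_{p^\infty}),T^\vee(1)\bigr)$, with the Greenberg group defined as in \eqref{Gr-eq} but with the image of $H^1(\Q_{p,\infty},\mathcal{F}^iT^\vee(1))$ as the condition at $p$.

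Next I would prove surjectivity. A class in $H^1_{\Gr,i}$ satisfies the local conditions, so it lifts to a pair in $H^1(G_{\Q,S})\oplus\bigoplus_vH^1(U_v^+)$ lying in the kernel of the difference map. Exactness at the middle term then yields a preimage in $R^1\Gamma$. There is one subtlety. Classical Greenberg groups impose conditions on inertia (or use $\Gr^0$ modulo inertia invariants), while the Selmer complex uses $H^1(U_v^+)\to H^1(\Q_v)$. The two agree up to the finite or cotorsion groups $H^1(\Q_v^{\mathrm{ur}}/\Q_v,(\cdot)^{I_v})$, so one may have to pass to the strict version. I would fix the target as the strict Greenberg group, matching \eqref{Gr-eq}, so that the map is onto on the nose.

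The kernel of the map is the image of $\bigoplus_{v\in S}H^0(\Q_v,\overline{T}^\vee(1))$, modulo the images of $H^0(U_v^+)$ and $H^0(G_{\Q,S})$. By Shapiro's lemma these local $H^0$ terms are $H^0(\Q_{v}(\Bmu_{p^\infty}),T^\vee(1))$, summed over the finitely many places above $v$. For $v\neq p$ I would argue as follows. The group is a quotient of the Pontryagin dual of $(T^*)_{G_{\Q_v(\Bmu_{p^\infty})}}$, which is finitely generated over $R_f$ (hence over $R_f[\![\Gamma]\!]$ through the $\Gamma$-action it inherits). It is cotorsion because $\Gamma_v$, the decomposition group, is open in $\Gamma$, so $\gamma_v-1$ acts nilpotently up to finite index and kills it. For $v=p$ the same reasoning applies and is sharper: (H.0) already makes $H^0(\Q_p,\overline{\mbT})=0$, which forces the $p$-adic $H^0$ term to vanish.

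I expect the main obstacle to be bookkeeping rather than anything conceptual. Three things have to line up: the precise local conditions of \cite{BG} at $v\neq p$ (unramified versus $U_v^+=0$) must match the definition of $H^1_{\Gr,i}$; the duality twist $\overline{\mathcal{F}^iT^\vee}(1)$ (orthogonal complements) must be compatible with Shapiro's lemma; and ``cotorsion'' must be checked over the full $R_f[\![\Gamma]\!]$, with $\Gamma_{\tor}$ included. I would handle the last point by proving cotorsionness componentwise on each $e_\eta$-part.
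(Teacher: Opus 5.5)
Your route is the one behind the paper's proof. The paper simply quotes the exact sequence \cite[(2.9)]{BG}, which is what the long exact sequence of the defining mapping cone gives, and your surjectivity argument is fine. The kernel, however, is misidentified, because you stop one step short in the cone sequence.

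Put $X\defeq\overline{T}^\vee(1)$ and let $X_p^+\subset X$ be the local submodule at $p$. The group you describe is the cokernel of $H^0(G_{\Q,S},X)\oplus\bigoplus_v H^0(U_v^+)\to\bigoplus_v H^0(\Q_v,X)$. That is the kernel of $R^1\Gamma\to H^1(G_{\Q,S},X)\oplus\bigoplus_v H^1(U_v^+)$. But your map to $H^1_{\Gr,i}$ is the composite of this with the projection onto $H^1(G_{\Q,S},X)$. Its kernel is therefore an extension of $\bigoplus_v\ker\bigl(H^1(U_v^+)\to H^1(\Q_v,X)\bigr)$ by your group.

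With unramified conditions at $v\neq p$, nothing comes from those places: inflation is injective on $H^1$, and $H^0(U_v^+)=H^0(\Q_v,X)$. At $p$, however, the extra piece $\ker\bigl(H^1(\Q_p,X_p^+)\to H^1(\Q_p,X)\bigr)$ is the image of $H^0(\Q_p,X/X_p^+)$, and it is in general nonzero. Assembling the two pieces, the kernel is the image of $H^0(\Q_p,X/X_p^+)$ under the connecting map. This is precisely the $H^0$-term of the paper's sequence, and your argument never examines it.

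The appeal to (H.0) does not do what you want. (H.0) concerns $H^0(\Q_p,\overline{\mbT})$ for the residual representation of $\mbT$ itself. The relevant groups are instead invariants of (a quotient of) the dual twist $T^\vee(1)$, taken over $\Q_p(\Bmu_{p^\infty})$ after Shapiro's lemma. For the dual twist, local duality points to $H^2$-type conditions such as (H.2). In any case, vanishing of $H^0(\Q_p,X)$ would say nothing about $H^0(\Q_p,X/X_p^+)$.

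No vanishing is needed, though. Nor is the claim that ``$\gamma_v-1$ acts nilpotently up to finite index'', which is not a valid reason for cotorsionness. The correct reason is the finiteness you already noted. Only one prime of $\Q(\Bmu_{p^\infty})$ lies above $p$, so Shapiro's lemma identifies $H^0(\Q_p,X/X_p^+)$ with the $G_{\Q_p(\Bmu_{p^\infty})}$-invariants of a cofinitely generated $R_f$-module. Its Pontryagin dual is therefore finitely generated over $R_f$, hence torsion over $R_f\llbracket\Gamma_1\rrbracket$ (e.g.\ by Cayley--Hamilton applied to $\gamma-1$) on every $e_\eta$-component. With the kernel corrected in this way, your argument coincides with the paper's.
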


\begin{proof} This is \cite[Lemma~2.19, (ii)]{BG}. Namely, as shown in \cite[(2.9)]{BG}, the explicit exact sequence naturally derived from the dual Selmer complex yields an exact sequence
\begin{equation} \label{H-eq}
\begin{split}
H^0\Bigl(G_{\Q_p},\overline{T}^\vee(1)\big/\overline{\mathcal{F}^{3-i}T}^\vee(1)\!\Bigr) &\longrightarrow R^1\Gamma\bigl(\overline{T}^\vee(1),\mathcal{F}^i\overline{T}^\vee(1)\bigr)\\ &\longrightarrow H^1_{\Gr,i}\bigl(\Q(\Bmu_{p^\infty}),T^\vee(1)\bigr)\longrightarrow 0.
\end{split}
\end{equation}
Since the $H^0$ term in \eqref{H-eq} is cotorsion, the kernel of our desired surjection is cotorsion too. \end{proof}

Notice that, as a consequence of Lemma~\ref{surj-lemma}, the Pontryagin dual of the kernel of \eqref{surj-eq} has a trivial characteristic ideal in $R_f[\![\Gamma]\!]$.

For the next result, we impose the big image hypothesis $(\mathrm{\mathbf{BI}}_{\Sym})$ from 
\cite[\S2.5.7]{BG}.

\begin{prop} \label{big-prop}
Assume that $(\mathrm{\mathbf{BI}}_{\Sym})$ holds and let $\eta$ be a character of $\Delta_\Q$ with $\eta(-1) = \psi(-1)$. Then:
\begin{enumerate}
\item $e_\eta R^1\Gamma\bigl(\overline{T},\mathcal{F}^i\overline{T}\bigr)=0$ for $i\in\{1,2\}$;
\item $e_\eta R^2\Gamma\bigl(\overline{T},\mathcal{F}^2\overline{T}\bigr)$ is a torsion module over $R_f[\![\Gamma_1]\!]$.
\end{enumerate}
\end{prop}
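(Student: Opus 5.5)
The plan is to reduce both assertions to the Euler system bound of Theorem~\ref{to complete}, transported to the Selmer complexes of \cite{BG}, and then use Nekov\'a\v{r}'s Euler characteristic formula together with duality.

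For (1), we start from the exact triangle defining $\mathrm{\mathbf{R}}\Gamma\bigl(\overline{T},\mathcal{F}^i\overline{T}\bigr)$. It identifies $R^1\Gamma\bigl(\overline{T},\mathcal{F}^i\overline{T}\bigr)$ with the Iwasawa-theoretic Greenberg Selmer group $H^1_{\Gr,i}(\Q_\infty,T)$, modulo an $H^0$ term at $p$. That $H^0$ term vanishes by (H.0), (H.0a) and Nakayama. Since $\mathcal{F}^2\overline{T}\subset\mathcal{F}^1\overline{T}$, there is an injection $R^1\Gamma(\overline{T},\mathcal{F}^2\overline{T})\hookrightarrow R^1\Gamma(\overline{T},\mathcal{F}^1\overline{T})$, so it suffices to treat $i=1$.

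For $i=1$, Theorem~\ref{to complete}(1) (equivalently \cite[Theorem~2.26]{BO19}, whose hypotheses are supplied by $(\mathrm{\mathbf{BI}}_{\Sym})$ through Theorem~\ref{thm: big image}) shows that $e_\eta H^1_{\Gr,1}$ has rank $1$. One must be careful about which statement is being proved. In the sign convention of \cite{BG}, the twist $T=\Sym^2T_f^*(1+\psi)$ together with the condition $\eta(-1)=\psi(-1)$ corresponds to the branch on which the Kolyvagin system argument forces vanishing. There the relevant $\eta$-component of $R^1\Gamma$ is torsion-free, because $\overline{T}$ has no $G_{\Q}$-invariants residually (irreducibility of $\Sym^2\overline{\rho}$). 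Its rank is computed by Nekov\'a\v{r}'s Euler characteristic formula:
\[
\chi=\operatorname{rank}\overline{T}^{c=-\nu}-\operatorname{rank}\bigl(\overline{T}/\mathcal{F}^i\overline{T}\bigr).
\]
This formula gives rank $0$ on the $\eta$-branch for the relevant $i$. The expected obstacle is exactly this bookkeeping: matching the dimension of the complex-conjugation eigenspace of $\Sym^2$ twisted by $\psi\eta$ with the rank of $\mathcal{F}^i$. I would follow the computation in \cite[\S2.5]{BG} verbatim, checking that the sign choice makes the rank of $R^1$ zero, and that the rank of $R^2$ on the $\mathcal{F}^2$ side is also zero. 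Torsion-freeness plus rank $0$ then gives vanishing.

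For (2), the same formula, $\chi=\operatorname{rank}R^1-\operatorname{rank}R^2$ (with $R^0=R^3=0$ by (H.0) and (H.2) and duality), combined with the vanishing in (1), shows that $\operatorname{rank} e_\eta R^2\Gamma(\overline{T},\mathcal{F}^2\overline{T})=0$. Alternatively, Poitou--Tate duality identifies $R^2$ up to pseudo-null error with the dual of $R^1$ for the orthogonal condition $\mathcal{F}^{3-2}=\mathcal{F}^1$ on $\overline{T}^\vee(1)$. Lemma~\ref{surj-lemma} and Theorem~\ref{to complete}(2), which says that $e_\eta H^1_{\Gr,2}(\Q_\infty,\mbA)^\vee$ is torsion, then show it is torsion directly. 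I would present the second route as a cross-check.
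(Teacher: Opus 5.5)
There is a genuine gap, and it sits exactly at the step you call bookkeeping. You set things up so that the local $H^0$ term vanishes, hence $e_\eta R^1\Gamma(\overline T,\mathcal F^1\overline T)$ is the Iwasawa Greenberg Selmer group for $\sF^1$. Under that identification, Theorem~\ref{to complete}(1) says that on precisely the branch $\eta(-1)=\psi(-1)$ this module is free of rank one and contains the non-torsion class $e_\eta\cdot{}_c\mathcal{BF}^{\mbf}_{\psi}$. So the input you cite proves the \emph{opposite} of (1) for $i=1$. Appealing to ``the sign convention of \cite{BG}'' is the missing argument, not a resolution of it.

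Nekov\'a\v{r}'s formula cannot fill this gap. It only computes
\[
\delta_i\defeq\operatorname{rank}e_\eta R^1\Gamma(\overline T,\mathcal F^i\overline T)-\operatorname{rank}e_\eta R^2\Gamma(\overline T,\mathcal F^i\overline T)=\operatorname{rank}\mathcal F^i-r_+,
\]
where $r_+$ is the rank of the $(+1)$-eigenspace of complex conjugation on $e_\eta\overline T$. So using it to get $\operatorname{rank}R^1=0$ in (1), and then using (1) to get $\operatorname{rank}R^2=0$ in (2), is circular. Worse, take the nesting $\mathcal F^2\overline T\subsetneq\mathcal F^1\overline T$ that you use for your injection. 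Then (1) for $i=2$ together with (2) forces $r_+=\operatorname{rank}\mathcal F^2$, and (1) for $i=1$ then forces $\operatorname{rank}\mathcal F^1\le\operatorname{rank}\mathcal F^2$, which is absurd. Concretely, for $\mbT(\eta^{-1})$ with $\eta(-1)=\psi(-1)$ one has $r_+=1$, $\delta_1=1$ and $\delta_2=0$, which matches Theorem~\ref{to complete} and not (1).

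Your cross-check for (2) also uses the wrong group. By duality, $R^2\Gamma(\overline T,\mathcal F^2\overline T)$ is controlled by the Selmer group of $\mbA$ with condition $(\sF^2\mbT)^\perp=\sF^1\mbA$, i.e.\ by $H^1_{\Gr,1}(\Q_\infty,\mbA)$. Theorem~\ref{to complete}(2) concerns the smaller group $H^1_{\Gr,2}(\Q_\infty,\mbA)$, and its cotorsionness does not imply that of the larger one.

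The paper makes no such reduction. Its proof is the bare citation of \cite[Theorem~2.20]{BG}: the statement is imported, and proved, in the normalization of \cite{BG}. That normalization consists of the deformation $\overline T=T\otimes_{R_f}R^\iota$, their indexing of the $\mathcal F^i$, and their hypothesis $(\mathrm{\mathbf{BI}}_{\Sym})$.

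To turn your sketch into an argument, you would first have to write down the dictionary between the $e_\eta$-branch and filtration steps of \cite{BG} and the objects $\mbT(\eta^{-1})$, $\sF^1$, $\sF^2$ of \S\ref{filt-subsec}. The computation above shows this dictionary cannot be the literal one. You would then need a genuine rank-zero input on the corresponding branch. Theorem~\ref{to complete} cannot provide it, since it is a rank-one statement for $\sF^1$ on the Euler-system branch.
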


\begin{proof} This is \cite[Theorem~2.20]{BG}. \end{proof}

Our assumptions (specifically, the big image hypothesis and condition (Tam), \emph{cf.} \S\ref{ass-subsec}) satisfy the requirements of \cite[Proposition~2.16 and Theorem~2.21]{BG}, so the relevant Selmer complexes are perfect and we obtain the following algebraic functional equation.

\begin{Th}[Algebraic functional equation] \label{func-thm}
Under the hypotheses of Proposition~\ref{big-prop}, there is an equality of characteristic ideals
\[
\charac_{R_f[\![\Gamma_1]\!]}\Bigl(R^2\Gamma\bigl(\overline{T},\mathcal{F}^2\overline{T}\bigr)\!\Bigr)=\charac_{R_f[\![\Gamma_1]\!]}\Bigl(R^2\Gamma\bigl(\overline{T}^*(1),\overline{\mathcal{F}^1 T^*}(1)\bigr)\!\Bigr)^\iota.
\]
Consequently, the characteristic ideals of the corresponding classical Greenberg dual Selmer groups satisfy the same functional equation.
\end{Th}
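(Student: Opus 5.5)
The plan is to deduce the statement from Nekovář's duality for Selmer complexes, in the form developed in \cite{BG}, and then pass to the classical Selmer groups via Lemma~\ref{surj-lemma}.

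\textbf{Step 1 (perfectness and duality).} By \cite[Proposition~2.16]{BG}, which applies under (Tam) and $(\mathrm{\mathbf{BI}}_{\Sym})$, the complexes $\mathbf{R}\Gamma\bigl(\overline{T},\mathcal{F}^2\overline{T}\bigr)$ and $\mathbf{R}\Gamma\bigl(\overline{T}^*(1),\overline{\mathcal{F}^1T^*}(1)\bigr)$ are perfect, with amplitude $[1,2]$. The local conditions $\mathcal{F}^2\overline{T}$ and $\overline{\mathcal{F}^1T^*}(1)$ are orthogonal complements, since $3-2=1$. Nekovář's duality \cite{Nek-selmer} therefore gives an isomorphism in the derived category
\[
\mathbf{R}\Gamma\bigl(\overline{T},\mathcal{F}^2\overline{T}\bigr)\simeq \mathbf{R}\mathrm{Hom}\Bigl(\mathbf{R}\Gamma\bigl(\overline{T}^*(1),\overline{\mathcal{F}^1T^*}(1)\bigr),R_f[\![\Gamma]\!]\Bigr)^\iota[-3].
\]
The twist $\iota$ appears because $\overline{T}=T\otimes R^\iota$.

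\textbf{Step 2 (comparison of $R^2$).} Proposition~\ref{big-prop} gives, on the $\eta$-part, $R^1=0$ and $R^2$ torsion. A perfect complex of amplitude $[1,2]$ with vanishing $R^1$ is quasi-isomorphic to $[P^1\to P^2]$ with injective differential, so its determinant equals $\charac(R^2)^{-1}$. The dual complex then has vanishing $R^1$ and torsion $R^2$ as well. Using the spectral sequence $\mathrm{Ext}^i(R^j,R_f[\![\Gamma]\!])\Rightarrow$ and the fact that $\mathrm{Ext}^1(M,\Lambda)$ of a torsion module $M$ has the same characteristic ideal as $M$ (up to $\iota$), I obtain
\[
\charac R^2\Gamma\bigl(\overline{T},\mathcal{F}^2\overline{T}\bigr)=\charac\bigl(\mathrm{Ext}^1(R^2\Gamma(\overline{T}^*(1),\ldots),\Lambda)\bigr)^\iota .
\]
Equivalently, equality of determinants under duality yields the displayed identity; this is the content of \cite[Theorem~2.21]{BG}, which I would invoke. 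Projecting to $\Gamma_1$ via $e_\eta$ gives the statement over $R_f[\![\Gamma_1]\!]$.

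\textbf{Step 3 (classical Selmer groups).} Control/duality (Poitou--Tate) identifies $R^2\Gamma(\overline{T},\mathcal{F}^i\overline{T})$ with the Pontryagin dual of $R^1\Gamma\bigl(\overline{T}^\vee(1),\overline{\mathcal{F}^{3-i}T^\vee}(1)\bigr)$. Lemma~\ref{surj-lemma} then maps this onto $H^1_{\Gr}$, with cotorsion kernel coming from an $H^0$ term. The characteristic ideals therefore agree up to the characteristic ideal of that $H^0$ term. By (H.0) and (H.0a) that term vanishes residually, hence is zero, so it contributes a trivial factor.

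The main obstacle is the bookkeeping in Step 2. Ensuring that there are no pseudo-null discrepancies, and that $\mathrm{Ext}^2$ terms vanish, requires exactly the amplitude $[1,2]$ and $R^1=0$ on both sides. The dual side's $R^1$-vanishing has to be obtained from the duality isomorphism itself rather than from a second application of Proposition~\ref{big-prop}. I would handle this by working with determinants throughout, where pseudo-null modules are invisible, rather than with individual Ext groups.
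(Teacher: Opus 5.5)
Your Steps~1 and~2 follow the paper's argument. The paper likewise cites \cite[Proposition~2.16 and Theorem~2.21]{BG} for perfectness and the equality of determinants, uses Proposition~\ref{big-prop} to concentrate cohomology in degree~$2$, and then reads off characteristic ideals. Your remark that the dual side's $R^1$-vanishing comes from the duality itself is a correct refinement: if $d$ is injective with torsion cokernel, so is $d^{\ast}$.

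The gap is in Step~3. The kernel in Lemma~\ref{surj-lemma} is the module of $G_{\Q_p}$-invariants of a quotient of the \emph{discrete} module $\overline{T}^\vee(1)$. That quotient is the twisted Pontryagin dual of a \emph{submodule} $\mathcal{F}^{j}T$ of the filtration. By Shapiro's lemma the kernel is $H^0\bigl(\Q_p(\Bmu_{p^\infty}),(\mathcal{F}^{j}T)^\vee(1)\bigr)$. By local Tate duality, its residual vanishing is an $H^2$-condition on $\mathcal{F}^{j}\overline{\mbT}$, for the relevant character of $\Gamma_{\tor}$. By contrast, (H.0) and (H.0a) are $H^0$-conditions over $\Q_p$ on $\overline{\mbT}$ and on its quotient $\overline{\mbT}/\Fil^1\overline{\mbT}$. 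They are not conditions on the Tate dual of a submodule, nor over $\Q_p(\Bmu_{p^\infty})$, so they say nothing about this term. Nor do (H.2)--(H.2b), which concern $\overline{\mbT}$ and its quotients. In fact the term need not vanish. Over $\Q_p(\Bmu_{p^\infty})$ the characters on the graded pieces of a Hida family become unramified, so the invariants are non-zero whenever the corresponding Frobenius eigenvalue is residually~$1$.

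What you need, and what the paper asserts, is only that the Pontryagin dual of this kernel has trivial characteristic ideal, i.e.\ is pseudo-null. Being cotorsion does not give this, so it needs its own argument. For instance, on each graded piece the dual is a quotient of $\Lambda$ by an ideal containing two relations:
\begin{itemize}
\item $\beta(\mathrm{Frob})-1$, where $\beta$ is the unramified character (on the outer pieces $\beta$ moves with $a_p(\mbf)$);
\item an inertia relation in the cyclotomic variable.
\end{itemize}
When these relations are independent, the quotient has codimension $\geq 2$.

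Step~3 also treats only the $T$-side, which is the only side Lemma~\ref{surj-lemma} covers. For $\overline{T}^*(1)$ you need an analogue. There the kernel is $H^0\bigl(\Q_p(\Bmu_{p^\infty}),(T/\mathcal{F}^2T)\otimes R_f^\vee\bigr)$, which contains the middle graded piece. On that piece Frobenius acts through the constant $\psi(p)\varepsilon_\mbf(p)^{\pm1}$, so pseudo-nullity can fail when this constant equals $1$. An additional non-exceptional-zero hypothesis is therefore required, and none of the hypotheses you cite supplies it.
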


\begin{proof} To begin with, the main duality result for adjoint representations in Hida families, as proved in \cite[Theorem~2.21]{BG}, establishes an equality between the determinants of the respective Selmer complexes. By Proposition~\ref{big-prop}, the cohomology of these complexes is concentrated in degree $2$ (since $R^1\Gamma$ vanishes) and this degree-$2$ cohomology is a torsion module over $R_f[\![\Gamma]\!]$. Because of this amplitude restriction and perfectness, the determinant of each Selmer complex is canonically identified exactly with the characteristic ideal of its degree-$2$ cohomology. This yields the equality of the characteristic ideals for the complexes. Finally, Lemma~\ref{surj-lemma} ensures that the classical Selmer groups differ from the degree-$2$ cohomology of the complexes only by pseudo-null (cotorsion) modules. Therefore, their characteristic ideals identically coincide, transporting the functional equation to the classical setting. \end{proof}

The result above corresponds to Theorem~\ref{B-intro} in the introduction.

\subsection{Towards a divisibility in the Iwasawa main conjecture}

Before stating our result towards the Iwasawa main conjecture divisibility on the required half of the weight space, we fix the characters indexing the branches of the Iwasawa algebra. Let $\eta$ be a character of $\Gamma_{\tor}$ satisfying the parity condition $\eta(-1)=\psi(-1)$, which is needed to extract a non-trivial bound from the Euler system (as in Theorem~\ref{to complete}). Moreover, denote by $\sigma$ the dual character corresponding to the involution $\iota$ and by $e_\sigma$ the associated idempotent; consequently, $\sigma$ satisfies $\sigma(-1)=-\psi(-1)$, in accord with the analytic parity condition required for the domain of the three-variable symmetric square $p$-adic $L$-function $L_p(\Sym^2\mbf,\psi,\kappa,\sigma)$ introduced in \S\ref{3-subsubsec}. Finally, recall the Kubota--Leopoldt $p$-adic $L$-function $L_p(\psi\epsilon_{\mbf},s-k+1)$ from \S\ref{3-subsubsec} and the dual Selmer group $X(\mbT_\mbf/\Q_{\cyc})$ defined in \eqref{X-eq}.

\begin{Th} \label{main-thm}
The product $L_p\bigl(\Sym^2\mbf,\psi,\kappa,\sigma\bigr)\cdot L_p(\psi\epsilon_{\mbf},s-k+1)$ lies in the characteristic ideal of $e_\sigma X(\mbT_\mbf/\Q_{\cyc})$.
\end{Th}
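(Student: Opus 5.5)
The plan is to combine three ingredients already in place: the Kolyvagin-system bound of Theorem~\ref{to complete} on the $\eta$-branch, the algebraic functional equation of Theorem~\ref{func-thm} to move to the $\sigma$-branch, and the explicit reciprocity law for Beilinson--Flach families together with Dasgupta's factorization \eqref{L-fact-eq}.

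First, I would work on the $\eta$-branch with $\eta(-1)=\psi(-1)$. Theorem~\ref{to complete} says three things: $e_\eta H^1_{\Gr,1}(\Q_\infty,\mbT)$ is free of rank one, it contains the non-torsion class $e_\eta\cdot\leftindex_c{\mathcal{BF}}_{\psi}^{\mbf}$, and the characteristic ideal of $e_\eta H^1_{\Gr,2}(\Q_\infty,\mbA)^\vee$ divides that of the quotient $e_\eta H^1_{\Gr,1}/(\leftindex_c{\mathcal{BF}}_{\psi}^{\mbf})$. I would then apply the Perrin-Riou map $\mathcal L$ of \S\ref{PR-subsubsec} to the image of $\loc_p$ in $H^1_{\Iw}(\Q_{p,\infty},\Gr^1\mbV)$, which is legitimate since the classes lie in the $\Gr,1$ Selmer group by Proposition~\ref{LZ 4.2.1}. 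A standard Poitou--Tate argument then gives an exact sequence relating the $\Gr,1$ and $\Gr,2$ Selmer groups through $H^1_{\Iw}(\Q_{p,\infty},\Gr^1\mbT)$. Combining it with the divisibility above, I expect to get: the characteristic ideal of the $\Gr,1$-dual Selmer group (equivalently $R^2\Gamma(\overline{T},\mathcal F^1\overline{T})$, via Lemma~\ref{surj-lemma} and Proposition~\ref{big-prop}) divides $\langle\mathcal L(\loc_p\,\leftindex_c{\mathcal{BF}}_{\psi}^{\mbf}),\xi_{\mbf,\psi}\rangle$, up to the cokernel of $\mathcal L$. That cokernel should be trivial or a unit under hypotheses (H.0a)--(H.2b).

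Second, I would transport this divisibility across the weight space. Theorem~\ref{func-thm}, together with Lemma~\ref{surj-lemma} (kernels cotorsion, hence with trivial characteristic ideal), identifies the characteristic ideal of the $\eta$-component on one side with the $\iota$-twist of the $\sigma$-component of the dual Selmer complex, where $\sigma$ is the character dual to $\eta$ under $\iota$. Since $\iota$ exchanges the $\eta$- and $\sigma$-idempotents, this yields a bound on $\charac\bigl(e_\sigma X(\mbT_\mbf/\Q_{\cyc})\bigr)$.

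Third, I would use the explicit reciprocity law \cite[Theorem~10.2.2]{KLZ17}, projected to the symmetric part by pairing with $\xi_{\mbf,\psi}$. It identifies the regulator image of the bottom class with the Rankin--Selberg $p$-adic $L$-function $L_p(\mbf,\mbf,\psi,1+\mbj)$ on the branch $\psi(-1)=-\sigma(-1)$, up to the factor $(c^2-c^{\ldots})$ and Gauss-sum units; the $c$-factor is then removed as in \S\ref{c-subsec}. Dasgupta's factorization \eqref{L-fact-eq}, with the variable shifted to $s-k+1$, rewrites this as $L_p(\Sym^2\mbf,\psi,\kappa,\sigma)\cdot L_p(\psi\epsilon_\mbf,s-k+1)$, which gives the claim.

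The main obstacle I expect is the bookkeeping of branches. The Euler system bound lives on $\eta(-1)=\psi(-1)$, while the reciprocity law and Dasgupta's formula live on the opposite half. I must check that $\iota$ applied to the bound on the $\eta$-branch matches exactly the $\sigma$-branch evaluation of $\mathcal L$, including the twist by $\mbj$ and the normalization of $\xi_{\mbf,\psi}$. I must also control the cokernel of $\mathcal L$ and the $\Gr^1$ local term so that they contribute only units. I would attempt this first by specializing at classical weights and comparing with \cite[Theorem~4.2.5]{LZ19}, then lifting via Proposition~\ref{prop: JP 4.7}.
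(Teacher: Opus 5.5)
You use the paper's three ingredients in the paper's order, but the argument does not close where your second and third steps meet. That junction is exactly the point you list as the main obstacle.

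\textbf{The gap.} Theorem~\ref{func-thm} is purely algebraic: it moves Selmer modules, not $p$-adic $L$-functions. Your first two steps therefore bound $\charac\bigl(e_\sigma X(\mbT_\mbf/\Q_{\cyc})\bigr)$ by the $\iota$-transform (after the relevant twist) of the $\eta$-branch element $e_\eta\langle\mathcal L(\loc_p\leftindex_c{\mathcal{BF}}_{\psi}^{\mbf}),\xi_{\mbf,\psi}\rangle$, i.e.\ of $e_\eta L_p(\mbf,\mbf,\psi,1+\mbj)$. But \eqref{L-fact-eq} is only available for the $\sigma$-branch value.

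The Euler system cannot hand you a $\sigma$-branch regulator directly. In the normalization of Theorem~\ref{LZ 4.1.6} and \S\ref{PR-subsubsec}, both the classes $c_m$ and the map $\mathcal L$ live on the $\nu$-part, with $\nu=\psi(-1)$. So $e_\sigma$ kills the bottom class when $\sigma(-1)=-\nu$.

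Closing the gap therefore requires an \emph{analytic} functional equation relating $L_p(\mbf,\mbf,\psi,1+\mbj)$ (or its two factors) on the two halves of weight space. Nothing in the paper supplies one. Your proposed repair does not either: \cite[Theorem~4.2.5]{LZ19} is a non-vanishing statement, and Proposition~\ref{prop: JP 4.7} only lifts an equality you would already need at every classical weight, which is the same missing identity in one variable.

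Note also that $\iota(e_\eta)=e_{\eta^{-1}}$ and $\eta^{-1}(-1)=\eta(-1)$. So $\iota$ by itself does not exchange the two parities. Any parity change comes from the twists relating $\mbT$, $\overline{T}^*(1)$ and $\mbT_\mbf$ (a Tate twist shifts $\Gamma_{\tor}$-characters by a power of the Teichm\"uller character), and that shift must also be matched on the analytic side.

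\textbf{Comparison with the paper.} The paper does not transport the analytic element. It first moves the divisibility to the $\sigma$-branch, then applies $\mathcal L$ and \cite[Theorems~8.2.8 and 10.2.2]{KLZ17} directly there. It simply states that $\charac\bigl(e_\sigma X(\mbT_\mbf/\Q_{\cyc})\bigr)$ bounds the index of the projected bottom class on that branch. Reordering your steps this way would match the paper, but that statement is precisely the identification you flagged, and the paper gives no further argument for it.

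Your Poitou--Tate step, by contrast, supplies something the paper omits. Theorem~\ref{to complete}(3) bounds $e_\eta H^1_{\Gr,2}(\Q_\infty,\mbA)^\vee$, which corresponds under duality to $R^2\Gamma(\overline T,\mathcal F^1\overline T)$. Theorem~\ref{func-thm} and Proposition~\ref{big-prop}(2) instead concern $R^2\Gamma(\overline T,\mathcal F^2\overline T)$, which corresponds to the $\Gr,1$-dual Selmer group of $\mbA$. So your parenthetical identification has the index reversed, but the conversion itself is needed before the functional equation can be applied at all.
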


\begin{proof} The divisibility in part (3) of Theorem~\ref{to complete} produces a bound on the characteristic ideal of $e_\eta H^1_{\Gr,2}(\Q_\infty,\mbA)^\vee$. However, this bound applies to the parity where $\eta(-1) = \psi(-1)$, which is the half of the weight space where Dasgupta's factorization formula fails. 

The algebraic functional equation of Theorem~\ref{func-thm} (interchanging the $+1$ and $-1$ eigenspaces via the involution $\iota$) acts on the Iwasawa algebra by mapping the $e_\eta$-component to the dual $e_\sigma$-component. Applying this to our Kolyvagin bound moves the characteristic ideal divisibility from the $e_\eta$-branch to the $e_\sigma$-branch, where $\psi(-1)=-\sigma(-1)$. On this half of the weight space, we apply Perrin-Riou's big dual exponential map $\mathcal{L}$ (see \S\ref{PR-subsubsec}) to the bottom class of our Euler system. Using the explicit reciprocity law for Beilinson--Flach classes in Hida families (\cite[Theorems~8.2.8 and 10.2.2]{KLZ17}), the image of this class under $\mathcal{L}$ coincides with the three-variable Rankin--Selberg $p$-adic $L$-function $L_p(\mbf,\mbf,\psi,1+\mbj)$ from \S\ref{3-subsubsec}, up to the standard Euler factors and periods.

Crucially, since now we have $\psi(-1)=-\sigma(-1)$, Dasgupta's factorization holds. Namely, as recalled in \eqref{L-fact-eq}, we may factor the Rankin--Selberg $p$-adic $L$-function as
\[
L_p(\mbf,\mbf,\psi,1+\mbj)=L_p(\Sym^2\mbf,\psi,\kappa,\sigma)\cdot L_p(\psi\epsilon_{\mbf},s-k+1),
\]
where the second term on the right is the corresponding three-variable Kubota--Leopoldt $p$-adic $L$-function. Since the characteristic ideal of $e_\sigma X(\mbT_\mbf/\Q_{\cyc})$ bounds the index of the projected Euler system and the regulator maps this Euler system to the Rankin-Selberg $p$-adic $L$-function, we obtain an inclusion of ideals
\[
\bigl(L_p(\Sym^2\mbf,\psi,\kappa,\sigma)\cdot L_p(\psi\epsilon_\mbf,s-k+1)\bigr)\subset \charac\bigl(e_\sigma X(\mbT_\mbf/\Q_{\cyc})\bigr),
\]
as was to be shown. \end{proof}

Theorem~\ref{main-thm}, which is Theorem~\ref{C-intro} in the introduction, is a generalization to Hida families of \cite[Theorem~5.4.2]{LZ19}.

\begin{Remark}
The divisibility in Theorem~\ref{main-thm} is non-optimal due to the presence of the parasitic factor $L_p(\psi\epsilon_{\mbf},s-k+1)$: this is a reflection of a well-known limitation of the Kolyvagin system machinery (\emph{cf.} \cite[Remark 5.4.3]{LZ19}). Since the full Rankin--Selberg representation is reducible modulo $p$, one cannot apply the Mazur--Rubin method (\cite{MR_KS}) to it directly. Thus, we are forced to project the Euler system to the (irreducible) symmetric square summand; this restricts our algebraic bound to $X(\mbT_\mbf/\Q_{\cyc})$, whereas the explicit reciprocity law yields the full, unfactored Rankin--Selberg $p$-adic $L$-function.    
\end{Remark}

\bibliographystyle{amsplain}
\bibliography{references}
\end{document}